\newcommand{\eps}{\varepsilon}
\newcommand{\nl}{\mathcal S_-}
\newcommand{\n}{\mathcal S}
\newcommand{\np}{\mathcal S_+}
\renewcommand{\v}{\mathcal{N}}
\newcommand{\vl}{\mathcal{N_-}}
\newcommand{\vp}{\mathcal{N_+}}
\renewcommand{\l}{\mathcal{W}}
\renewcommand{\ln}{\mathcal{W_-}}
\newcommand{\lv}{\mathcal{W_+}}
\newcommand{\p}{\mathcal{E}}
\newcommand{\pn}{\mathcal{E_-}}
\newcommand{\pv}{\mathcal{E_+}}
\newcommand{\nz}{n\mathbb{Z}^2}
\newcommand{\m}{P}
\newcommand{\tm}{\widehat{\m}}
\newcommand{\zz}{\mathbb{Z}^2}
\newcommand{\rr}{\mathbb{R}^2}
\newcommand{\real}{\mathbb{R}}
\newcommand{\z}{\mathbb{Z}}
\newcommand{\set}[2]{\{#1 \colon #2 \}}
\newcommand{\diag}{\mathop{\mathrm{diag}}}
\newcommand{\floor}[1]{\lfloor #1\rfloor}
\newcommand{\Floor}[1]{\left\lfloor #1\right\rfloor}
\newcommand{\ceil}[1]{\lceil #1\rceil}
\newcommand{\Ceil}[1]{\left\lceil #1\right\rceil}
\newtheorem{theorem}{Theorem}[section]
\newtheorem{lemma}[theorem]{Lemma}
\newtheorem{proposition}[theorem]{Proposition}
\newtheorem{corollary}[theorem]{Corollary}
\newtheorem*{subtheorem-a}{Sub-Theorem A}
\newtheorem*{subtheorem-b}{Sub-Theorem B}
\newtheorem*{subtheorem-c}{Sub-Theorem C}
\theoremstyle{definition}
\newtheorem{definition}[theorem]{Definition}
\theoremstyle{remark}
\newtheorem{remark}[theorem]{Remark}
\numberwithin{equation}{section}
\begin{document}

\title[Sublattice-free lattice polygons]{Bounds on the number of vertices of 
sublattice-free lattice polygons}
\author[N.~Bliznyakov]{Nikolai Bliznyakov}
\address{Faculty of Mathematics, Voronezh State University, 1 Universitetskaya 
pl., Voronezh, 394006, Russia}{}
\email{bliznyakov@vsu.ru}
\author[S.~Kondratyev]{Stanislav Kondratyev}
\address[S.~Kondratyev]{CMUC, Department of
Mathematics, University of Coimbra, 3001-501 Coimbra, Portugal}{}
\email{kondratyev@mat.uc.pt}

\subjclass[2010]{52C05, 52B20, 11H06, 11P21} 



\begin{abstract}
In this paper we establish bounds on the number of vertices for a few classes 
of convex sublattice-free lattice polygons.  The bounds are essential for 
proving the formula for the critical number of vertices of a lattice polygon 
that ensures the existence of a sublattice point in the polygon.  To obtain 
the bounds, we use relations between the number of edges of lattice broken 
lines and the coordinates of their endpoints.
\end{abstract}

\keywords{integer lattice, integer polygons, lattice-free polygons, 
Diaphantine inequalities, integer broken lines}

\maketitle

\section{Introduction}

Papers studying geometric and combinatorial properties of convex lattice 
polytopes and polygons are quite numerous: \cite{Sco88, AZ95, Sim90, Lov89, 
BCCZ10, MD11, Ave13, Hen83, LZ91, Pik01, AKN15, Rab89, Rab90} to cite a few; 
see also the monographs \cite{GL87, EGH89, Gru07, BR09}.  Our present study is 
motivated by our paper~\cite{BKa}.

Remember that a \emph{lattice} in $\rr$ spanned by a given linearly 
independent system of two vectors is the set of of integral linear 
combinations of the system.  The system itself is called the \emph{basis} of 
the lattice.  The \emph{integer lattice}~$\zz$ is the lattice spanned by the 
standard basis of~$\rr$.  The points of~$\zz$ are called \emph{integer 
points}.  We are primarily interested in the integer lattice and the ones 
contained in it, i.~e. its sublattices.

By a convex polygon we understand the convex hull of a finite set of points 
in~$\rr$ that has nonempty interior.  We assume that the reader is familiar 
with basic terminology such as vertices and edges, see~\cite{Gru03, Zie95} for 
reference.  As we never consider nonconvex polygons, we occasionally drop the 
word `convex'.  If a polygon has~$N$ vertices, we refer to it as an $N$-gon. 
An \emph{integer polygon}, or a \emph{lattice polygon} is a polygon, whose 
vertices are integer points.  Generally, we prefer the former term, as it is 
unambiguous in contexts where a few lattices are considered simultaneously.

It was noted in~\cite{Bli00} that any convex integer pentagon contains a point 
of the lattice~$2\zz = (2\z) \times (2\z)$.  The paper~\cite{BKa} raises the 
following question: given a sublattice~$\Lambda$ of the integer lattice~$\zz$, 
what is the smallest number of vertices of an integer polygon that ensures 
that the polygon contains at least one point of~$\Lambda$?  The answer is the 
Main Theorem of~\cite{BKa}.  To state it, we recall that any sublattice 
of~$\zz$ is characterised by two positive integers called invariant factors 
(see the definition in Section~\ref{sec:prelim:geom}).  Let $\delta$, $n$ be 
the invariant factors of~$\Lambda$ and set
\begin{equation*}
\nu(\Lambda) = \nu(\delta, n) =2n+2\min\{\delta,3\}-3
.
\end{equation*}
\begin{theorem}[Main Theorem of \cite{BKa}]
\label{th:mt}
Let~$\Lambda$ be a proper sublattice of $\zz$ having the invariant factors 
$\delta$, $n$.  Then any convex integer polygon with~$\nu(\Lambda)$ vertices 
contains a point of $\Lambda$.
\end{theorem}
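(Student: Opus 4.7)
I would prove the contrapositive: any convex integer polygon $P$ which is $\Lambda$-free, meaning $P \cap \Lambda = \emptyset$, has at most $\nu(\Lambda) - 1$ vertices. Since the invariant factors of $\Lambda$ are $\delta, n$ with $\delta \mid n$, a unimodular transformation of $\zz$ sends $\Lambda$ to $\delta\z \times n\z$; such transformations preserve both $\Lambda$-freeness and the number of vertices, so I may assume $\Lambda = \delta\z \times n\z$ throughout.

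The geometric content of $\Lambda$-freeness is transparent in these coordinates: whenever $P$ meets a horizontal line $y = jn$ in a segment of horizontal length $\ge \delta$, it contains a point $(k\delta, jn) \in \Lambda$, a contradiction. Hence the horizontal chord of $P$ at each height $jn$ has length $< \delta$, and analogous constraints pin down how $P$ sits between successive grid lines. These restrictions allow me to classify $\Lambda$-free polygons into a short list of structural subclasses, distinguished by which grid lines $y = jn$ are crossed or touched by $P$ and by the positions of the topmost and bottommost vertices of $P$.

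For each subclass I would then bound the number of vertices by decomposing $\partial P$ into its upper and lower lattice broken lines---the convex chains of edges connecting the leftmost and rightmost vertices of $P$---and applying the relations between the edge count of such a broken line and the coordinates of its endpoints, which are precisely the tools developed in the main body of the paper. Summing the edge estimates within each subclass and taking the worst case across subclasses should give the total vertex bound $\nu(\delta,n) - 1$.

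The main obstacle is explaining the $2\min\{\delta, 3\}$ correction in the formula. For $\delta = 1$ the polygon is forced to lie strictly between two consecutive grid lines $y = jn$ and $y = (j+1)n$, producing the sharpest bound $2n - 2$; for $\delta = 2$ one extra pair of vertices can be accommodated near the top and bottom extremes; and for $\delta \ge 3$ there is enough horizontal slack for two further extremal vertices, after which increasing $\delta$ brings no improvement. Calibrating each subclass bound to match this trichotomy exactly, and verifying sharpness by exhibiting extremal $\Lambda$-free polygons attaining $\nu(\Lambda) - 1$ vertices, is where I expect the bulk of the technical work to lie.
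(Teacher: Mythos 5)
Your proposal is a programme rather than a proof: the two steps that carry all of the difficulty --- the classification of $\Lambda$-free polygons into ``structural subclasses'' and the vertex bound for each subclass --- are exactly what you defer (``where I expect the bulk of the technical work to lie''), and they are what this paper together with~\cite{BKa} spend their entire length establishing. Moreover, one structural claim you do make is false. For $\delta=1$ you assert that a $\Lambda$-free polygon ``is forced to lie strictly between two consecutive grid lines $y=jn$ and $y=(j+1)n$.'' It is not: an integer polygon can cross an integer horizontal line without containing an integer point on it (the triangle with vertices $(0,1)$, $(1,-1)$, $(1,-2)$ meets the line $x_2=0$ in the segment $1/3\le x_1\le 1/2$), so bounded horizontal chords at the heights $jn$ do not confine the polygon to a slab. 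The whole point of the type classification (Definition~\ref{def:types}, types II$_n$--VI$_n$) is that such crossings do occur and have to be analysed; collapsing the $\delta=1$ case to ``sharpest bound $2n-2$ because the polygon lies in a slab'' skips the hardest part of the argument.

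The actual route is also structured differently from your plan in a way that matters. Since $\delta$ divides $n$, one has $\nz\subseteq\Lambda$, so a $\Lambda$-free polygon is in particular $\nz$-free; Theorem~\ref{th:types} then reduces, up to affine automorphisms of the \emph{square} lattice $\nz$, to the six types, and the dependence on $\delta$ does not enter through horizontal chord lengths of $\delta\z\times n\z$ at all. It enters through Sub-Theorem~C: the improved bounds $N\le 2n$ and $N\le 2n-2$ hold under the hypothesis that the \emph{vertices} of the polygon lie in a $(1,n/2)$- or $(1,n)$-lattice, and the passage from the invariant factor $\delta$ of $\Lambda$ to these hypotheses (which is where the saturation $\min\{\delta,3\}$ is really explained) is carried out in~\cite{BKa}. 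Your heuristic about ``one extra pair of vertices near the extremes'' for $\delta=2$ and ``enough horizontal slack'' for $\delta\ge3$ gives no mechanism for why the bound stops improving at $\delta=3$, and your decomposition of $\partial P$ into just an upper and a lower chain is coarser than the four maximal slopes plus horizontal/vertical edges (Proposition~\ref{pr:boundary}) on which the quantitative machinery (Proposition~\ref{pr:slp}, Theorems~\ref{th3-6} and~\ref{th3-8}) is built. Finally, exhibiting extremal polygons with $\nu(\Lambda)-1$ vertices is not needed for the statement; the contrapositive reduction itself is fine, but everything after it would have to be redone along the lines of the paper.
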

The constant~$\nu(\Lambda)$ in Theorem~\ref{th:mt} cannot be improved.

Remarkably, Theorem~\ref{th:mt} does not allow for straightforward 
generalisations to higher dimensions.

The paper~\cite{BKa} does not give a full proof of Theorem~\ref{th:mt}.  It is 
shown that the the proof can be reduced to obtaining upper bounds on the 
number of vertices of integer polygons free of points of the lattice $\nz = (n 
\z) \times (n \z)$.  The paper~\cite{BKa} introduces the following 
classification of such polygons.

We say that a line or a segment \emph{splits} a polygon, if it divides the 
polygon into two parts with nonempty interior.  Hereafter $[\mathbf a, \mathbf 
b]$ denotes the segment with the endpoints~$\mathbf a$ and~$\mathbf b$.

\begin{figure}
\label{fig:types}
\includegraphics{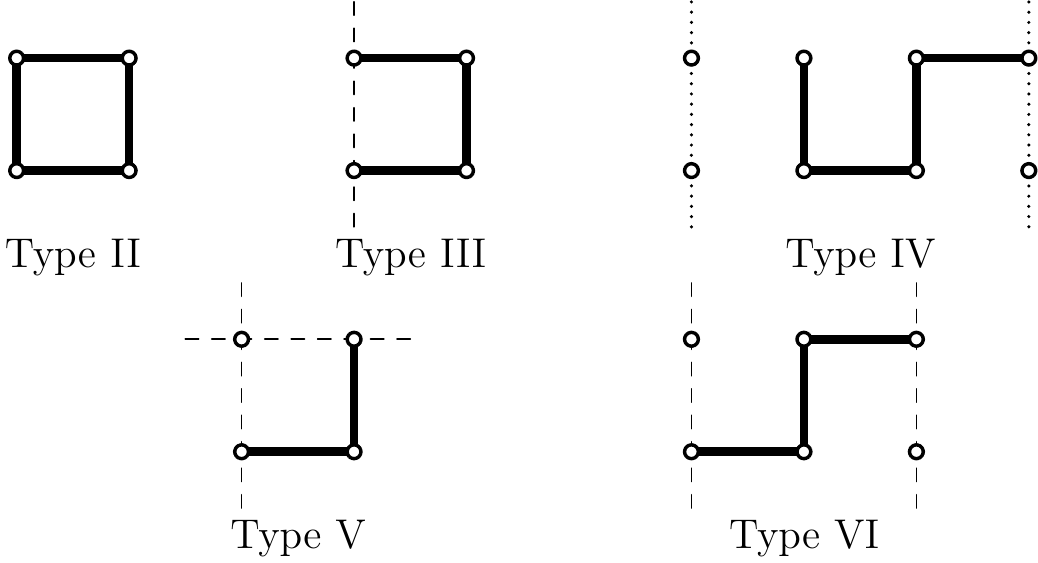}
\caption{Definition~\ref{def:types} introduces the types of polygons in terms 
of intersection with segments and lines.  Here thick segments split polygons 
of the specified type, thin lines do not split them, and dotted lines have no 
common points with them.}
\end{figure}

Let $\m$ be an integer polygon free from points of~$\nz$, where $n \ge 2$ is 
an integer.

\begin{definition}
\label{def:types}
We say that $\m$ is a
\begin{itemize}
\item
\emph{type~I$_n$ polygon}, if no line of the form $x_1 = jn$ or $x_1 = jn$ 
where $j \in \z$, splits $\m$, or, equivalently, if $\m$ lies in a slab of the 
form $jn \le x_1 \le (j+1)n$ or $jn \le x_2 \le (j+1)n$, where $j \in 
\z$;
\item
\emph{type~II$_n$ polygon}, if each of the segments $[\mathbf{0},(n,0)]$, 
$[(n,0),(n,n)]$, $[(0,n),(n,n)]$, and $[\mathbf{0},(0,n)]$ splits~$\m$;
\item
\emph{type~III$_n$ polygon}, if each of the segments $[\mathbf{0},(n,0)]$, 
$[(n,0),(n,n)]$, and $[(n,n),(0,n)]$ splits~$\m$, and the line $x_1 = 0$ does 
not split $\m$;
\item
\emph{type~IV$_n$ polygon}, if each of the segments $[\mathbf{0},(0,n)]$, 
$[\mathbf 0, (n, 0)]$, $[(n, 0), (n, n)]$, and $[(n, n), (2n, n)]$ splits~$\m$ 
and~$\m$ has no common points with the lines $x_1 = -n$ and $x_n = 2n$;
\item
\emph{type~V$_n$ polygon}, if each of the segments $[\mathbf 0, (-n ,0)]$ and 
$[\mathbf 0, (0, n)]$ splits $\m$ and the lines $x_1 = -n$ and $x_2 = n$ do 
not split~$\m$;
\item
\emph{type~VI$_n$ polygon}, if each of the segments $[\mathbf 0, (-n ,0)]$, 
$[\mathbf 0, (0, n)]$, and $[(0, n), (n, n)]$ splits~$\m$, and the lines $x_1 
= \pm n$ do not split~$\m$.
\end{itemize}
\end{definition}

The polygon types are illustrated on Figure~\ref{fig:types}.

The following theorem is proved in~\cite{BKa}.

\begin{theorem}
\label{th:types}
Suppose that an integer polygon $\m$ is free of points of the lattice $\nz$, 
where $n \in \z$, $n\ge2$; then there exists an affine transformation 
$\varphi$ of $\rr$ preserving $\nz$ such that $\varphi(\m)$ is a polygon of 
one of the types I$_n$--VI$_n$.
\end{theorem}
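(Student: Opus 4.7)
The plan is to use the group $G = GL_2(\mathbb{Z}) \ltimes \nz$ of affine transformations preserving~$\nz$ to put~$\m$ into one of six canonical configurations, by a case analysis on how~$\m$ meets the grid of lines $\{x_i = kn : i \in \{1, 2\},\, k \in \mathbb{Z}\}$. First, if $\m$ is contained in some slab $jn \le x_i \le (j+1)n$, a translation in~$\nz$ places~$\m$ in $0 \le x_i \le n$, giving type~I$_n$. Otherwise~$\m$ is split by at least one vertical and one horizontal grid line, and after a translation in~$\nz$ we may assume that both $x_1 = 0$ and $x_2 = 0$ split~$\m$.

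The main conceptual input is the following convex-hull lemma, which I would establish first: if a convex set meets all four open quadrants about a point~$v$, then $v$ lies in the set (by a winding-number argument applied to the convex hull of four representative points, one per quadrant). Applied to~$\m$ and~$\oo$: since $\oo \notin \m$, at least one of the open quadrants about the origin is disjoint from~$\m$. By composing with the sign-changing reflections in $GL_2(\mathbb{Z})$, normalize so the empty quadrant is the third, giving $\m \subseteq \{x_1 \ge 0\} \cup \{x_2 \ge 0\}$. Applying the same principle at the nearby lattice points of~$\nz$ on the boundary of~$\m$'s bounding box---typically $(n, 0)$, $(0, n)$, $(n, n)$, and possibly others, depending on how far~$\m$ extends---constrains which grid cells can meet~$\m$ and yields a short list of cell-occupation patterns.

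For each such pattern, one applies further operations in $GL_2(\mathbb{Z})$ (for instance, swapping $x_1 \leftrightarrow x_2$, or a shear $(x_1, x_2) \mapsto (x_1 + kx_2, x_2)$) to match~$\m$ to one of types~II$_n$--VI$_n$, verifying in each case that the chords of~$\m$ on the relevant grid lines actually lie within the specified edge segments (the stronger ``segment splits'' condition of Definition~\ref{def:types}, rather than merely ``line splits''). The main obstacle will be completeness: one must confirm that no cell-occupation pattern is missed, and in particular that polygons elongated obliquely across many cells can be rectified by a $GL_2(\mathbb{Z})$-shear into either a slab (returning to type~I$_n$) or into the linear type~IV$_n$ pattern. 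A secondary subtlety is the verification of the chord-inclusion conditions, which excludes certain otherwise plausible configurations and may force additional translation by~$\nz$ to position~$\m$ correctly.
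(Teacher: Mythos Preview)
This paper does not actually prove Theorem~\ref{th:types}; it is stated here and attributed to the companion paper~\cite{BKa}. So there is no proof in the present paper to compare your proposal against.

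That said, your plan is the natural one and matches the spirit of the tools assembled here. In particular, your ``convex-hull lemma'' is exactly Lemma~\ref{Helly} (with Corollary~\ref{cor:Helly}) of this paper, and the normalisation device you describe---translating so that the axes $x_1=0$ and $x_2=0$ both split~$\m$, then using reflections and shears in $GL_2(\z)$ to standardise the quadrant pattern---is the expected route.

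However, what you have written is a sketch, not a proof. You yourself flag the genuine issue: completeness of the case analysis. The six types in Definition~\ref{def:types} are quite specific (each imposes both ``segment splits'' and ``line does not split'' conditions), and showing that every residual configuration reduces to one of them via a shear or reflection is the entire content of the theorem. Your proposal does not carry this out; it only asserts that it can be done and lists the anticipated difficulties. In particular, the reduction of ``polygons elongated obliquely across many cells'' is not obvious, and neither is the verification that after a shear one lands precisely on type~IV$_n$ rather than some configuration not on the list. To turn this into a proof you would need to enumerate the possible occupation patterns of the $\nz$-cells meeting~$\m$ (bounded, say, by the number of grid lines of each orientation that split~$\m$), and for each pattern exhibit an explicit element of $GL_2(\z)\ltimes\nz$ realising one of the six types.
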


The hard part of Theorem~\ref{th:mt} is encapsulated in the following 
assertion.

\begin{theorem}[Sub-Theorem~C of~\cite{BKa}]
\label{subtheorem-c}
Let $\m$ be a convex integer $N$-gon of one of the types~I$_n$--VI$_n$, where 
$n$ is an integer, $n \ge 3$.  Then:
\begin{enumerate}[(i)]
\item
the following inequality holds:
$$
N\le2n+2
;
$$
\item
if the vertices of $\m$ belong to a lattice with invariant factors $(1, n/2)$, 
then
$$N\le2n;$$
\item
if the vertices of $\m$ belong to a lattice with invariant factors $(1, n)$, 
then
$$N\le2n-2.$$
\end{enumerate}
\end{theorem}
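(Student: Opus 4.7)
My plan is a case analysis over the six types~I$_n$--VI$_n$ of Definition~\ref{def:types}. In each case, the defining splitting conditions together with the $n\z^2$-freeness pin down the position of $\m$ relative to the fundamental $n\z^2$-cell $[0,n]^2$ and its neighbours, and the bound on $N$ is assembled from local bounds on convex lattice broken lines.

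The main technical tool I would assume available is a family of estimates (promised by the paper's abstract): for a strictly convex lattice broken line whose edge directions all lie in a single quadrant, the number of edges is bounded linearly by the horizontal and vertical displacements of its endpoints, with a correction when certain $n\z^2$-points along the arc are forbidden to appear. With such estimates in hand, the recipe for part~(i) is as follows. First, decompose $\partial\m$ into four convex monotone arcs separated by the extremal vertices of $\m$ in the four cardinal directions. Second, read off from the type definition the bounding box of each arc: for~I$_n$, $\m$ lies in a slab of width $n$, so the two lateral arcs have horizontal span $\le n$; for~II$_n$, $\m$ straddles $[0,n]^2$ in all four directions but is confined to an explicit bounded region once $n\z^2$-freeness is imposed; types~III$_n$--VI$_n$ are similar, each confining $\m$ to a small union of $n\times n$ cells. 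Third, apply the broken-line bound to each arc and sum; in every case this should yield $N \le 2n+2$.

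For parts~(ii) and~(iii), the extra hypothesis that all vertices of $\m$ lie in a sublattice $\Lambda'$ of $\zz$ with invariant factors $(1, n/2)$ or $(1, n)$ means that every edge vector of $\m$ belongs to $\Lambda'$. The admissible primitive edge directions of $\m$ then form a proper subset of the primitives of $\zz$ counted in~(i), and one checks that in each monotone arc a controlled number of directions drops out. Summing the losses over the four arcs yields the improvements from $2n+2$ down to $2n$ in~(ii) and to $2n-2$ in~(iii).

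The principal obstacle is the case-by-case bookkeeping for types~III$_n$--VI$_n$, where $\m$ genuinely spills out of a single $n\z^2$-cell and one must keep precise track of which $n\z^2$-points on the relevant boundary lines are forbidden to lie in $\m$ and which may be used to shorten an arc. The sharp constant~$+2$ in~(i) is achieved only in a handful of degenerate type-I configurations, and extracting it tightly rather than with additive slack is where I expect most of the technical work to sit; this is also where the hypothesis $n \ge 3$ appears to be needed, since for $n = 2$ some of the relevant forbidden lattice points on neighbouring cell boundaries would coincide and the counting argument would degenerate.
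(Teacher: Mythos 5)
Your high-level skeleton (split the boundary into four monotone lattice arcs and bound each arc's edge count by the endpoint displacements, with refinements when the vertices lie in a proper sublattice) is indeed the paper's starting point, but the proposal breaks down at the step where you claim that the type definitions confine $\m$ to ``a small union of $n\times n$ cells'' so that summing the arc bounds gives $N\le 2n+2$. That claim is false for types~III$_n$--VI$_n$. For a type~III$_n$ or IV$_n$ polygon only the horizontal extent is bounded by roughly $2n$; the vertical extent $\mathcal N-\mathcal S$ can be as large as about $n^2$ (cf.\ the bound \eqref{eq19-lem8-4}), so the naive displacement bounds on the two lateral arcs do not sum to anything close to $2n+2$. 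This is exactly why the paper does not sum box dimensions: it uses the splitting-frame estimates of Theorem~\ref{th3-6} and Proposition~\ref{pr:slp}, which introduce auxiliary integer parameters $s_i,t_i$ trading the number of edges against quadratic lower bounds of the form $s(s+1)/2$ on the vertical displacement, and then derives a contradiction by playing an upper bound on $\mathcal N-\mathcal S$ against a lower bound (see the proof of Lemma~\ref{lem8-2}). Nothing in your recipe produces these compensating quadratic terms, and without them the arc-by-arc count simply does not close.

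The situation is worse for types~V$_n$ and VI$_n$: their definitions only require two or three segments to split $\m$ and certain lines not to split it, so $\m$ has no bounded box at all and there is no ``bounding box of each arc'' to read off. The paper handles these types by a reduction you do not have: the iterated lift transformation (Section~\ref{sec:lift}), which maps any type~V$_n$ polygon onto a type~III$_n$ or type~Va$_n$ polygon (Lemma~\ref{lem:th11-1}), and maps any type~VI$_n$ polygon onto one of types I$_n$, II$_n$, III$_n$, V$_n$ (Lemma~\ref{lem:th12-1}). Moreover, for part~(ii) your idea that the sublattice hypothesis just removes some primitive edge directions per arc is not enough: for type~Va$_n$ polygons with vertices in a $(1,n/2)$-lattice the paper cannot obtain $N\le 2n$ by any local direction count; it first assembles the crude global bound $N\le 2n+2$ for all $\nz$-free polygons (Lemma~\ref{lem:sq}, which already requires Theorem~\ref{th:types}) and then finishes with a lattice-point count via Pick's theorem (Lemma~\ref{lem13-2}). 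These reductions and the Pick-type global argument are the missing ideas; without them the proposal cannot establish the theorem for types~III$_n$--VI$_n$, i.e.\ for precisely the cases this paper is about.
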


It is shown in~\cite{BKa} that Theorem~\ref{subtheorem-c} together with other 
results of that paper imply Theorem~\ref{th:mt}.  In~\cite{BKa} 
Theorem~\ref{subtheorem-c} is proved for type~I and~II polygons.  The present 
paper aims to prove this theorem for the rest cases, systematically applying 
the approach developed in~\cite{BKa}.  Thus, the proof of Theorem~\ref{th:mt} 
will be also completed.

The core of the method is constituted by a few statements about certain 
classes of broken lines. These statements provide relations between the number 
of edges of the broken lines and the coordinates of their endpoints.  Our 
strategy in dealing with specific types of polygons is to translate geometric 
constraints imposed on a polygon into Diaphantine inequalities relating the 
numbers of vertices of certaing broken lines contained in the boundary of the 
polygon, the parameters of its bounding box and, possibly, auxiliary integral 
parameters.  Then we analyse the inequalities trying to obtain estimates on 
the number of vertices.  This can prove rather technical due to the number of 
parameters and nonlinearities.

The rest of the paper is organised as follows.

In Section~\ref{sec:prelim} we collect familiar facts concerning the geometry 
of lattices and convex polygons as well as results of~\cite{BKa} useful for 
estimating the number of edges of lattice broken lines and polygons.

In Sections~\ref{sec:iii} and~\ref{sec:iv} we prove Theorem~\ref{subtheorem-c} 
for type~III and~IV polygons, respectively, applying the method described 
above.  In the case of type~III polygons the transition from geometric 
constraints to Diaphantine inequalities is fairly straightforward, but the 
analysis of the inequalities is rather involved.  Type~IV polygons are 
somewhat more technical from the geometric point of view.

In Section~\ref{sec:v} we consider type~V polygons.  We show that iterating 
so-called lift transitions in combination with certain affine transformations 
it is always possible to map any type~V polygon either onto a type~III polygon 
or onto a polygon lying in a certain triangle (we call them type~Va polygons).  
As the case of type~III polygons is know, it clearly suffices to consider 
type~Va polygons instead of type~V polygons.  We establish a few bounds on the 
number of vertices of type~Va polygons using various algebraic and geometric 
tricks; however, at this point we are unable to obtain all the estimates 
required by Theorem~\ref{subtheorem-c}.  We revisit type Va polygons in 
Section~\ref{sec:last}.

In Section~\ref{sec:vi} we reuse the lift transformations introduced in the 
previous section and show that any type~VI polygon can be mapped onto a 
polygon of another type.  However, type~V is not excluded, so at this point we 
are unable to prove Theorem~\ref{subtheorem-c} for type~VI polygons.  The 
theorem is proved in the next section.

In Section~\ref{sec:last} we finally prove the missing inequality for type~Va 
polygons, which completes the proofs of Theorems~\ref{subtheorem-c} and, 
eventually,~\ref{th:mt}.  What enables us to carry out this proof is a bound 
on the number of vertices of an arbitrary integer polygon free of points 
of~$\nz$, obtained as a combination of established estimates and 
Theorem~\ref{th:types}.

\section{Preliminaries}
\label{sec:prelim}

\subsection{Lattices and polygons}
\label{sec:prelim:geom}

In this section we collect a few definitions and facts concerning the geometry 
of the integer lattice and convex polytops.  For reference, see \cite{Cas12, 
GL87, Gru07, BR09}.

A lattice $\Lambda \subset \zz$ is spanned by the columns of a matrix $A = 
(a_{ij}) \in GL_2(\z)$ if and only if $\Lambda = A \zz = \set{A\mathbf u}{ 
\mathbf u\in\zz}$. Given $\Lambda$, the matrix $A$ is not uniquely defined.  
However, the numbers
\begin{equation*}
\delta = \gcd(a_{ij}),\ n = |\det A|/\delta
\end{equation*}
are independent of $A$.  They are called \emph{invariant factors} of 
$\Lambda$, and the pair $(\delta, n)$ is \emph{the invariant factor sequence 
of} $\Lambda$ (see \cite{Nor12}).  The product of the invariant factors equals 
the determinant of~$A$; it is called the \emph{determinant} of the lattice and 
denoted~$\det \Lambda$.  Clearly, proper sublattices of~$\zz$ (i.~e.\ the ones 
that do not coincide with~$\zz$) have determinants $\ge 2$.

For brevity, we write that $\Lambda$ is a $(\delta, n)$-lattice if it is a 
sublattice of~$\zz$ with invariant factor sequence $(\delta, n)$.

As an example, the lattice $n\zz = \set{(nu_1, nu_2)}{u_1, u_2 \in \zz}$, 
where $n$ is a positive integer, has invariant factor sequence $(n, n)$, and 
the lattice $\delta \z \times n \zz = \set{(\delta u_1, nu_2)}{u_1, u_2 \in 
\zz}$, where $\delta$ and $n$ are positive integers and $\delta$ divides $n$, 
has invariant factor sequence $(\delta, n)$.

If a point belongs to a lattice~$\Lambda$, we call it a $\Lambda$-point.

A matrix $A \in M_2(\z)$ is called \emph{unimodular}, if $\det A = \pm 1$.  

\begin{proposition}
\label{pr:th1-1}
Let $(\mathbf f_1, \mathbf f_2)$ be a basis of a lattice~$\Lambda$; then the 
vectors $a_{i1} \mathbf f_j + a_{i2} \mathbf f_2$, where $i = 1, 2$, form a 
basis of $\Lambda$ if and only if the matrix $(a_{ij})$ is unimodular.
\end{proposition}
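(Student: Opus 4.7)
The plan is to argue by direct manipulation of the change-of-basis matrix, splitting the equivalence into the two standard implications.

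For the \emph{if} direction, I would assume $A = (a_{ij})$ is unimodular. First, I note that $\mathbf g_i = a_{i1}\mathbf f_1 + a_{i2}\mathbf f_2 \in \Lambda$ trivially, and they are linearly independent over $\mathbb R$ because $\det A = \pm 1 \ne 0$ while $(\mathbf f_1, \mathbf f_2)$ is itself linearly independent. It remains to see that $\mathbf g_1, \mathbf g_2$ generate $\Lambda$ over $\mathbb Z$. Since $A$ is unimodular, Cramer's rule shows $A^{-1}$ has integer entries, so I can solve for $\mathbf f_1, \mathbf f_2$ as $\mathbb Z$-linear combinations of $\mathbf g_1, \mathbf g_2$; pushing an arbitrary element of $\Lambda$ (written in the $\mathbf f_i$-basis) through this expression shows it is a $\mathbb Z$-combination of the $\mathbf g_i$.

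For the \emph{only if} direction, I would assume $(\mathbf g_1, \mathbf g_2)$ is also a basis of $\Lambda$. Then since $\mathbf f_1, \mathbf f_2 \in \Lambda$, there exists an integer matrix $B = (b_{ij})$ with $\mathbf f_i = b_{i1}\mathbf g_1 + b_{i2}\mathbf g_2$. Substituting the definition of the $\mathbf g_j$ gives
\begin{equation*}
\mathbf f_i = \sum_{k=1}^{2} (BA)_{ik}\,\mathbf f_k.
\end{equation*}
Because $(\mathbf f_1,\mathbf f_2)$ is linearly independent, the coefficients must match the identity, so $BA = I$. Taking determinants and using that both $\det A$ and $\det B$ are integers forces $\det A \cdot \det B = 1$, hence $\det A = \pm 1$, i.e.\ $A$ is unimodular.

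There is no real obstacle here; the statement is a classical change-of-basis fact. The only point requiring mild care is the appeal to linear independence of $(\mathbf f_1,\mathbf f_2)$ in the \emph{only if} direction, which is what turns the identity $\mathbf f_i = \sum_k (BA)_{ik}\mathbf f_k$ into the matrix equation $BA = I$; I would make that step explicit rather than invoking uniqueness of coordinates abstractly.
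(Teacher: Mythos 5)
Your proof is correct. The paper states Proposition~\ref{pr:th1-1} without proof, treating it as a classical fact from the cited references, and your argument (integrality of $A^{-1}$ via Cramer's rule for the \emph{if} direction, and $BA = I$ forcing $\det A\cdot\det B = 1$ over $\z$ for the \emph{only if} direction) is exactly the standard proof one would find there; you also correctly read the statement's evident typo $a_{i1}\mathbf f_j$ as $a_{i1}\mathbf f_1$.
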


A linear transformation of the plane is called a linear automorphism of a 
lattice if it maps the lattice onto itself.  A linear transformation is an 
automorphism of a lattice if and only if it maps some (hence, any) basis of 
the lattice onto another basis.  Clearly, linear automorphism of a lattice 
form a group.

Given a matrix $A \in M_2(\real)$, the transformation $\mathbf x \mapsto 
A\mathbf x$ is an automorphism of $\zz$ if and only if the matrix~$A$ is 
unimodular.  We call such transformation \emph{unimodular}.  What is more, for 
any positive integer $n$, the linear automorphisms of $\nz$ are exactly the 
unimodular transformations.

If $\Lambda$ is a sublattice of $\zz$ and $A$ is a unimodular transformation, 
the image $A\Lambda$ is a lattice with the same invariant factors as 
$\Lambda$.

The following proposition is a geometric version of the Smith normal form of 
integral matrices~\cite{Nor12}.

\begin{proposition}
\label{pr:snf}
For any sublattice of $\zz$ with invariant factors $(\delta, n)$ there exists 
a unimodular transformation mapping it onto the lattice $\delta \z \times n 
\z$.
\end{proposition}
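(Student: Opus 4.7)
The plan is to deduce the proposition from the Smith normal form for $2 \times 2$ integer matrices, which is the standard algebraic tool behind the invariant factor terminology used earlier in the section.

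First I would fix a basis of $\Lambda$ and encode it as a matrix $A = (a_{ij}) \in M_2(\z)$, so that $\Lambda = A\zz$. By the definition of the invariant factor sequence recalled just above, $\gcd(a_{ij}) = \delta$ and $|\det A| = \delta n$. I would then invoke the Smith normal form theorem (as cited in \cite{Nor12}): there exist unimodular matrices $U, V \in GL_2(\z)$ such that
\begin{equation*}
UAV = D = \diag(d_1, d_2), \qquad d_1 \mid d_2.
\end{equation*}
The key point is to match the Smith invariants with our parameters. Since both left and right multiplication by unimodular matrices preserves the gcd of all entries, $d_1 = \gcd(a_{ij}) = \delta$; since $|\det U| = |\det V| = 1$, we also have $d_1 d_2 = |\det A| = \delta n$, hence $d_2 = n$.

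Now I would read off the geometric statement from the identity $UAV = \diag(\delta, n)$. Because $V$ is unimodular, Proposition~\ref{pr:th1-1} (or the direct observation that $V$ is a linear automorphism of $\zz$) yields $V\zz = \zz$, so
\begin{equation*}
U\Lambda = U(A\zz) = U A V \zz = \diag(\delta, n)\, \zz = \delta\z \times n\z.
\end{equation*}
Since $U$ is unimodular, the linear map $\mathbf x \mapsto U\mathbf x$ is the required unimodular transformation sending $\Lambda$ onto $\delta\z \times n\z$.

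There is really no substantive obstacle; the whole argument is a translation of the Smith normal form theorem into the language of lattices. The only bookkeeping step worth attending to is verifying that $d_1 = \delta$ and $d_2 = n$ (rather than the reverse), which follows from the characterisation $d_1 = \gcd(a_{ij})$ together with $d_1 \mid d_2$ and $\delta \mid n$; if for some reason Smith were normalised with decreasing divisibility, composing with the swap $\bigl(\begin{smallmatrix}0 & 1 \\ 1 & 0\end{smallmatrix}\bigr)$ (also unimodular) would restore the desired order.
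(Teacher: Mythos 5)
Your proof is correct and follows exactly the route the paper intends: the paper states this proposition as "a geometric version of the Smith normal form" with a reference to \cite{Nor12}, and your argument is precisely that translation, including the correct identification $d_1=\gcd(a_{ij})=\delta$ and $d_1d_2=|\det A|=\delta n$.
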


An \emph{affine frame} of a lattice~$\Lambda$ is a pair $(\mathbf o; \mathbf 
f_1, \mathbf f_2)$ consisting of a point $\mathbf o \in \Lambda$ and a basis 
$(\mathbf f_1, \mathbf f_2)$ of~$\Lambda$.  An \emph{integer frame} is an 
affine frame of~$\zz$.

An \emph{affine automorphism} of a lattice $\Lambda$ is an affine 
transformation of $\rr$ mapping $\Lambda$ onto itself.  It is not hard to see 
that given $A \in M_2(\real)$ and $\mathbf b \in \rr$, the mapping $\mathbf x 
\mapsto A \mathbf x + \mathbf b$ is an affine automorphism of $\Lambda$ if and 
only if $\mathbf x \mapsto A \mathbf x$ is an automorphism of $\Lambda$ and 
$\mathbf b \in \Lambda$.  In particular, affine automorphisms of $\nz$, where 
$n$ is a positive integer, are exactly the transformations of the form 
$\mathbf x \mapsto A \mathbf x + \mathbf b$, where $A$ is unimodular and 
$\mathbf b \in \nz$.

Of course, if $\m$ is a convex integer $N$-gon and $\varphi$ is an affine 
automorphism of~$\zz$, the image $\varphi(\m)$ is still a convex integer 
$N$-gon.  Obviously, is $\m$ is free from points of a lattice $\Lambda$, then 
so is its image under any affine automorphism of $\Lambda$.

Following~\cite{BKa}, we introduce the following definition.

Let $\Lambda$ be a sublattice of~$\zz$ and $(\mathbf f_1, \mathbf f_2)$ be a 
basis of~$\zz$.  Clearly, $\{u \in \z \colon u \mathbf f_1 \in \Lambda\}$ is a 
subgroup of~$\z$.  It is generated by a positive integer, which we call the 
\emph{large $\mathbf f_1$-step} of~$\Lambda$ with respect to $(\mathbf f_1, 
\mathbf f_2)$.  Further, $\{u_1 \in \z \colon \exists\ u_2 \in z,\ u_1 f_1 + 
u_2 f_2 \in \Lambda\}$ is a subgroup of~$\z$, too.  We call its positive 
generator the \emph{small $\mathbf f_1$-step} of~$\Lambda$.  Alternatively, 
the small $\mathbf f_1$-step can be defined as the largest $s$ such that all 
the points of~$\Lambda$ lie on the lines $\{ks\mathbf f_1 + t\mathbf f_2\}$, 
$k \in \z$.  Obviously, the small step is smaller then the large step.  We can 
define the large and small $\mathbf f_2$-steps of $\Lambda$ with respect 
to~$(\mathbf f_1, \mathbf f_2)$ in the same way.

In what follows we consider small and large steps of lattices with respect to 
bases made up of the vectors $\pm \mathbf e_1$, $\pm \mathbf e_2$, where 
$(\mathbf e_1, \mathbf e_2)$ is the standard basis of~$\rr$, and we usually 
omit the reference to the basis when there is no ambiguity.

We note two simple properties of the steps.

\begin{proposition}
\label{pr:th1-11}
Let $\Lambda$ be a sublattice of~$\zz$ and $(\mathbf f_1, \mathbf f_2)$ be a 
basis of~$\zz$.  Then the product of the small $\mathbf f_1$-step and the 
large $\mathbf f_2$-step of~$\Lambda$ equals the determinant of~$\Lambda$.
\end{proposition}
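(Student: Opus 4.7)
The plan is to exhibit an explicit basis of $\Lambda$ whose determinant (expressed in the basis $(\mathbf f_1, \mathbf f_2)$) is visibly equal to the product of the two steps. Let $s$ denote the small $\mathbf f_1$-step of $\Lambda$ and $\ell$ the large $\mathbf f_2$-step. By the definition of $s$, there exists some $t \in \z$ with $\mathbf v_1 := s\mathbf f_1 + t\mathbf f_2 \in \Lambda$, and by the definition of $\ell$ we have $\mathbf v_2 := \ell\mathbf f_2 \in \Lambda$.

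The key step is to check that $(\mathbf v_1, \mathbf v_2)$ is a basis of $\Lambda$. Take an arbitrary $\mathbf w = u_1\mathbf f_1 + u_2\mathbf f_2 \in \Lambda$. The definition of the small $\mathbf f_1$-step as the generator of the projection $\{u_1 \in \z \colon \exists u_2,\ u_1\mathbf f_1 + u_2\mathbf f_2 \in \Lambda\}$ forces $u_1 = ks$ for some $k \in \z$. Then $\mathbf w - k\mathbf v_1 = (u_2 - kt)\mathbf f_2 \in \Lambda$, and the definition of the large $\mathbf f_2$-step forces $u_2 - kt = m\ell$ for some $m \in \z$. Hence $\mathbf w = k\mathbf v_1 + m\mathbf v_2$, and since $\mathbf v_1,\mathbf v_2$ are clearly linearly independent, they form a basis.

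To conclude, write the basis $(\mathbf v_1, \mathbf v_2)$ in the basis $(\mathbf f_1, \mathbf f_2)$: the transition matrix is
\begin{equation*}
\begin{pmatrix} s & 0 \\ t & \ell \end{pmatrix},
\end{equation*}
with determinant $s\ell$. Because $(\mathbf f_1, \mathbf f_2)$ is a basis of $\zz$ (so its matrix in the standard basis is unimodular), the determinant of $\Lambda$, which is the absolute value of the determinant of the matrix whose columns are $\mathbf v_1, \mathbf v_2$ expressed in the standard basis, equals $s\ell$.

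There is no real obstacle here; the only thing to watch is the separate use of the two different step definitions (the small step governs the $\mathbf f_1$-coefficient reduction, while the large step then governs the residual $\mathbf f_2$-multiple), which is exactly what makes the argument go through and what explains why the product of a small step and a large step, rather than two small or two large steps, yields the determinant.
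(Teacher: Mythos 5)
Your proof is correct: the triangular basis $(\mathbf v_1,\mathbf v_2)=(s\mathbf f_1+t\mathbf f_2,\ \ell\mathbf f_2)$ is exactly the right construction, the verification that it generates $\Lambda$ correctly uses the small step to reduce the $\mathbf f_1$-coefficient and the large step to absorb the leftover $\mathbf f_2$-multiple, and passing to the standard basis via the unimodular matrix of $(\mathbf f_1,\mathbf f_2)$ gives $\det\Lambda=s\ell$. The paper states this proposition without proof among its background facts, so there is no argument to compare against; the only point you leave implicit is that both steps are well defined, i.e.\ the subgroups $\{u\in\z\colon u\mathbf f_2\in\Lambda\}$ and the projection subgroup are nontrivial, which follows since $\Lambda$ has full rank (indeed $(\det\Lambda)\zz\subseteq\Lambda$).
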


\begin{proposition}
\label{pr:th1-12}
Let $(\mathbf f_1, \mathbf f_2)$ be a basis of~$\zz$, $\Lambda$ be a 
sublattice of~$\zz$ such that the small $\mathbf f_1$-step of~$\Lambda$ is~1, 
and $R$ be a complete residue system modulo~$\det \Lambda$.  Then there exists 
a unique $r \in R$ such that $(\mathbf f_1 + r \mathbf f_2, (\det \Lambda) 
\mathbf f_2)$ is a basis of~$\Lambda$.
\end{proposition}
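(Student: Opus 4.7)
The plan is to translate both the hypothesis and the conclusion into the two equivalent formulations of ``step'' used earlier in the preliminaries, and then read off the basis directly.

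First I would unpack the hypothesis via the alternative formulation of the small step: saying that the small $\mathbf f_1$-step of~$\Lambda$ is~$1$ means that there exists an integer~$r_0$ with $\mathbf f_1+r_0\mathbf f_2\in\Lambda$.  Next I would invoke Proposition~\ref{pr:th1-11}, which says that the small $\mathbf f_1$-step times the large $\mathbf f_2$-step equals $\det\Lambda$, to conclude that the large $\mathbf f_2$-step equals $\det\Lambda$ exactly; in particular $(\det\Lambda)\mathbf f_2\in\Lambda$, and every element of~$\Lambda$ of the form $c\mathbf f_2$ has $c$ a multiple of $\det\Lambda$.

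With such an $r_0$ in hand, I would check that $(\mathbf g_1,\mathbf g_2):=(\mathbf f_1+r_0\mathbf f_2,\,(\det\Lambda)\mathbf f_2)$ is a basis of~$\Lambda$.  Both vectors lie in~$\Lambda$, and the matrix expressing them in the basis $(\mathbf f_1,\mathbf f_2)$ is lower triangular with determinant $\det\Lambda$, so the sublattice they span has the correct determinant and must coincide with~$\Lambda$.  Equivalently, given any $\mathbf v=a\mathbf f_1+b\mathbf f_2\in\Lambda$, subtracting $a\mathbf g_1$ leaves $(b-ar_0)\mathbf f_2\in\Lambda$, and the large-step characterization from the previous paragraph forces $\det\Lambda\mid(b-ar_0)$, yielding the required integer coefficients.

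Finally, for existence and uniqueness of $r\in R$, I would observe that the set $\{r\in\z\colon\mathbf f_1+r\mathbf f_2\in\Lambda\}$ is precisely the coset $r_0+(\det\Lambda)\z$: it is closed under adding multiples of $\det\Lambda$ because $(\det\Lambda)\mathbf f_2\in\Lambda$, and any two of its elements differ by a $c$ with $c\mathbf f_2\in\Lambda$, hence $\det\Lambda\mid c$.  So any complete residue system~$R$ modulo $\det\Lambda$ meets this coset in exactly one $r$, and for that $r$ the spanning argument of the previous paragraph applies verbatim.  The argument is largely bookkeeping between the two equivalent formulations of the small step; the only place I would be careful is in pinning down via Proposition~\ref{pr:th1-11} that the large $\mathbf f_2$-step is exactly $\det\Lambda$, since this single identity is the hinge that both produces the second basis vector and controls the admissible values of~$r$.
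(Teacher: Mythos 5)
Your proof is correct, and since the paper states Proposition~\ref{pr:th1-12} among the preliminaries without an argument (deferring to~\cite{BKa}), there is nothing it conflicts with: your route via Proposition~\ref{pr:th1-11} (large $\mathbf f_2$-step $=\det\Lambda$), the triangular change-of-basis matrix, and the observation that $\{r\in\z\colon \mathbf f_1+r\mathbf f_2\in\Lambda\}$ is a single coset modulo $\det\Lambda$ is exactly the standard argument one would expect here. All steps check out, including the uniqueness claim, since any admissible $r$ must lie in that coset and a complete residue system meets it exactly once.
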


The following Lemma and its corollary are geometrically obvious.  An 
application of Helly's theorem provides an immediate proof of the proposition.

\begin{lemma}
\label{Helly}
If a convex polygon has common points with each of the four angles formed by 
intersecting lines, it contains the intersection point.
\end{lemma}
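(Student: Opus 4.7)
The plan is to set up coordinates so that the two given lines become the coordinate axes meeting at the origin $\oo$, and then to argue that $\oo \in \m$ by examining the intersection of $\m$ with one of these axes. I would denote the axes $\ell_1 = \{x_2 = 0\}$ and $\ell_2 = \{x_1 = 0\}$, and label the four closed quadrants (which play the role of the four angles) $A_1, A_2, A_3, A_4$, taken counterclockwise starting with $A_1 = \{x_1 \ge 0,\ x_2 \ge 0\}$.

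The key step will be to show that $\m$ meets $\ell_1$ on both sides of $\oo$. For this I would pick $p_1 \in \m \cap A_1$ and $p_4 \in \m \cap A_4$, both nonempty by hypothesis, and write $p_i = (a_i, b_i)$. By convexity $[p_1, p_4] \subset \m$, and since $b_1 \ge 0 \ge b_4$ the segment meets $\ell_1$ at a point whose first coordinate is a convex combination of the non-negative numbers $a_1$ and $a_4$, hence non-negative. This produces a point of $\m \cap \ell_1$ with $x_1 \ge 0$. The symmetric argument applied to $p_2 \in \m \cap A_2$ and $p_3 \in \m \cap A_3$ produces a point of $\m \cap \ell_1$ with $x_1 \le 0$.

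Since $\m \cap \ell_1$ is a convex subset of $\ell_1$ --- hence a (possibly degenerate) closed interval --- and it contains points with $x_1 \ge 0$ and with $x_1 \le 0$, it must contain $\oo$, which proves the lemma. I do not foresee a real obstacle: convexity of $\m$ converts the hypothesis on the opposite angles into points of $\m \cap \ell_1$ straddling $\oo$, after which a one-dimensional convexity argument closes the case. As the excerpt hints, this is in essence the planar Helly theorem applied to $\m$ together with the four closed half-planes bounded by $\ell_1$ and $\ell_2$, the two complementary-half-plane triples being handled precisely by the segment construction above.
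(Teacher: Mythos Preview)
Your argument is correct. The paper does not actually give a detailed proof of this lemma; it simply remarks that the statement is geometrically obvious and that an application of Helly's theorem yields it immediately. Your approach is a direct, self-contained argument: by pairing points in opposite quadrants you produce points of $\m \cap \ell_1$ on both sides of the origin, and then one-dimensional convexity of $\m \cap \ell_1$ forces $\oo \in \m$. This avoids invoking Helly's theorem as a black box, at the cost of a few lines of explicit computation. As you note at the end, the Helly route would take the five convex sets $\m$, $\{x_1 \ge 0\}$, $\{x_1 \le 0\}$, $\{x_2 \ge 0\}$, $\{x_2 \le 0\}$ and observe that every three of them intersect (the nontrivial triples being exactly those handled by your segment construction), whence all five share a point, necessarily $\oo$. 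Either argument is perfectly adequate here.
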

\begin{corollary}
\label{cor:Helly}
If a convex polygon has common points with both sides of one of the vertical 
angles and does not contain its vertex, it has no common points with the other 
vertical angle.
\end{corollary}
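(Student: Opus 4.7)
The plan is to invoke the planar Helly theorem applied to the polygon together with the four closed half-planes determined by the two lines.  Denote the lines by $\ell_1,\ell_2$, their intersection by~$v$, and, for each $i\in\{1,2\}$, let $H_i^+$ and $H_i^-$ be the two closed half-planes bounded by~$\ell_i$.  The four angles in the statement are precisely the sets $H_1^\alpha\cap H_2^\beta$ with $\alpha,\beta\in\{+,-\}$, and
$$
H_1^+\cap H_1^-\cap H_2^+\cap H_2^-=\ell_1\cap\ell_2=\{v\}.
$$
Denoting the polygon by~$\m$, the lemma therefore reduces to showing that the five convex sets $\m,H_1^+,H_1^-,H_2^+,H_2^-$ in~$\rr$ have a common point.

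To apply Helly's theorem, I would verify that every triple chosen from these five sets has nonempty intersection.  A triple built exclusively from the four half-planes always contains~$v$.  A triple of the form $\{\m,H_1^\alpha,H_2^\beta\}$ coincides with the intersection of~$\m$ with one of the four angles and is nonempty by hypothesis.  The only remaining case is a triple of the form $\{\m,H_i^+,H_i^-\}$, whose intersection equals $\m\cap\ell_i$; here convexity does the work, since by hypothesis $\m$ contains points lying in two angles straddling~$\ell_i$ (one in~$H_i^+$, one in~$H_i^-$), and the segment joining them lies in~$\m$ and crosses~$\ell_i$.

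Once every triple is seen to be nonempty, Helly's theorem in the plane produces a point common to all five sets, which by the opening identity must be~$v$, and the lemma follows.  The only substantive step is the convexity argument in the third case above; the remainder is bookkeeping, and I do not anticipate any obstacle.
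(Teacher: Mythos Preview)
Your argument is a correct proof of Lemma~\ref{Helly} (``if the polygon meets all four angles, it contains the vertex''), but the statement you were asked to prove is the \emph{corollary}, and as written your proof does not establish it. The hypothesis of the corollary is that~$\m$ meets the two rays bounding a single angle, say $A_1=H_1^+\cap H_2^+$, and that $v\notin\m$; the desired conclusion is that $\m$ does \emph{not} meet the opposite angle $A_3=H_1^-\cap H_2^-$. Your sentence ``A triple of the form $\{\m,H_1^\alpha,H_2^\beta\}$ \dots\ is nonempty by hypothesis'' is therefore false for $(\alpha,\beta)=(-,-)$: nothing in the hypothesis tells you $\m\cap A_3\ne\varnothing$, and indeed that is precisely what you must rule out. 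Likewise, ``the lemma therefore reduces to showing that the five convex sets \dots\ have a common point'' is backwards: if they did, that point would be~$v$, contradicting $v\notin\m$.

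The fix is short. Argue by contradiction: assume $\m$ meets $A_3$. Since~$\m$ meets each bounding ray of~$A_1$, and each ray lies in the two adjacent closed angles, $\m$ already meets $A_1$, $A_2$, and $A_4$; together with the assumption, $\m$ meets all four angles. Now your Helly argument (which is exactly the paper's intended proof of Lemma~\ref{Helly}) forces $v\in\m$, contradicting the hypothesis. This is how the paper treats it: the corollary is stated as an immediate consequence of the lemma, and the lemma is disposed of by the Helly application you wrote down.
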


We always denote the vectors of the standard basis of~$\rr$ by $\mathbf e_1 = 
(1, 0)$ and $\mathbf e_2 = (0, 1)$ and the standard coordinates in~$\rr$ by 
$x_1$, $x_2$.  We also use usual notations $\floor{\cdot}$ for the floor 
function, $\ceil{\cdot}$ for the ceiling function, and $|\cdot|$ for the 
cardinality of a finite set.

\subsection{Slopes}

This section summarises the results of~\cite{BKa} about a class of broken 
lines called slopes.  These are our main tool for obtaining bounds on the 
number of vertices of polygons.  The proofs can be found in~\cite{BKa}.

Let $(\mathbf{f}_1,\mathbf{f}_2)$ be a basis of $\rr$, and let $\mathbf v_0, 
\mathbf v_1 , \dots, \mathbf v_N$ ($N \ge 0$) be a finite sequence of points 
on the plane.  If $N\ge1$, set
\begin{equation}\label{eq1-2-3}
\mathbf{v}_i+\mathbf{v}_{i-1} =
\mathbf{a}_i=a_{i1}\mathbf{f}_1+a_{i2}\mathbf{f}_2\qquad
(i=1,\ldots,N)
.
\end{equation}
If
\begin{equation}\label{eq3-3}
a_{i1}>0,\ a_{i2}<0\qquad (i=1,\ldots,N)
\end{equation}
and
\begin{equation}\label{eq4-3}
\begin{vmatrix}
a_{i1}&a_{i+1,1}\\
a_{i2}&a_{i+1,2}
\end{vmatrix}
>0
\qquad (i=1,\ldots,N-1)
,
\end{equation}
we say that the union $Q$ of the segments $[\mathbf v_{0}, \mathbf v_1]$, 
$[\mathbf v_{1}, \mathbf v_2]$, \dots, $[\mathbf v_{N-1}, \mathbf v_N]$ is a 
\emph{slope} with respect to the basis $(\mathbf f_1,\mathbf f_2)$.  These 
segments are called the \emph{edges} of the slope, and the points $\mathbf 
v_0$, $\mathbf v_1$, \dots, $\mathbf v_N$, its \emph{vertices}, $\mathbf v_0$ 
and $\mathbf v_N$ being the \emph{endpoints}.  If $N = 1$, we call the segment 
$[\mathbf v_0, \mathbf v_1]$ a slope if \eqref{eq3-3} holds, and if $N = 0$, 
we still call the one-point set $\{\mathbf v_0\}$ a slope.  If all the 
vertices of $Q$ belong to a lattice $\Gamma$, we call it a 
$\Gamma$-\emph{slope}.  A $\zz$-slope is called \emph{integer}, and it is the 
only kind of slopes we are interested in.

If $Q$ is a slope with respect to a basis $(\mathbf f_1, \mathbf f_2)$, then 
it is a slope with respect to the basis $(\mathbf f_2, \mathbf f_1)$ as well.

\begin{proposition}
\label{pr:slp}
Let $(\mathbf f_1, \mathbf f_2)$ be a basis of~$\zz$ and~$\mathbf v$ 
and~$\mathbf w$ be the endpoints of an integer slope (with respect to 
$(\mathbf f_1, \mathbf f_2)$) having~$N$ edges.  Let
\begin{equation*}
\mathbf w - \mathbf v = b_1 \mathbf f_1 + b_2 \mathbf f_2
.
\end{equation*}
Then there exists an integer $s$ such that
\begin{gather}
2N\le |b_1| + s,
\label{eq:slp1}
\\
|b_2| \ge\frac{s(s+1)}{2},
\label{eq:slp2}
\\
0\le s\le N.
\label{eq:slp3}
\end{gather}
If the vertices of the slope belong to a lattice with small $\mathbf f_1$-step 
greater then~1, one can take $s = 0$, so that
\begin{equation}
\label{eq:slp4}
2N \le |b_1|
.
\end{equation}
If the vertices of the slope belong to a lattice having the basis 
$(\mathbf{f}_1-a\mathbf{f}_2,m\mathbf{f}_2)$, where $1\le
a\le m$, then~\eqref{eq:slp2} can be replaced by
\begin{equation}
\label{eq:slp5}
|b_2| \ge\frac{2a+(s-1)m}{2}s
.
\end{equation}
\end{proposition}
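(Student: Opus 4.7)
The plan is to take $s$ equal to the number of edges $\mathbf a_i$ with $a_{i1}=1$, and to show that this single choice of $s$ satisfies all three inequalities~\eqref{eq:slp1}--\eqref{eq:slp3}. The engine of the argument is that condition~\eqref{eq4-3} forces the sequence of slopes $a_{i2}/a_{i1}$ to be strictly monotone; in particular, any two edges that both have $a_{i1}=1$ automatically have distinct values of $a_{i2}$.

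The bound $0\le s\le N$ in~\eqref{eq:slp3} is immediate from the definition of $s$. For~\eqref{eq:slp1}, note that $b_1=\sum_i a_{i1}$ with every $a_{i1}\ge 1$, so splitting the sum according to whether $a_{i1}=1$ or $a_{i1}\ge 2$ yields $|b_1|=b_1\ge s+2(N-s)=2N-s$. For~\eqref{eq:slp2}, the $s$ edges with $a_{i1}=1$ contribute $s$ pairwise distinct negative integers to the list of values $a_{i2}$, so their contribution to $|b_2|=\sum_i(-a_{i2})$ is at least $1+2+\cdots+s=s(s+1)/2$, while the remaining edges contribute nonnegatively. If the small $\mathbf f_1$-step of the ambient lattice exceeds $1$, then every lattice vector, and in particular every edge vector, has $a_{i1}\ge 2$; so the count $s$ defined above is zero and~\eqref{eq:slp4} reduces to~\eqref{eq:slp1}.

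For~\eqref{eq:slp5}, we retain the same choice of~$s$. An edge $\mathbf a_i$ with $a_{i1}=1$, written as $u_1(\mathbf f_1-a\mathbf f_2)+u_2(m\mathbf f_2)$, must have $u_1=1$, hence $a_{i2}=u_2m-a$; combined with $a_{i2}<0$ and $1\le a\le m$ this forces $u_2\le 0$, so $-a_{i2}$ lies in $\{a,a+m,a+2m,\ldots\}$. Monotonicity of slopes makes the $s$ values pairwise distinct, so their sum is minimized by taking the first $s$ terms of the progression, yielding $|b_2|\ge a+(a+m)+\cdots+(a+(s-1)m)=s(2a+(s-1)m)/2$. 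The argument is combinatorial throughout, and I do not expect a serious technical obstacle; the only thing that really needs care is ensuring a \emph{single} $s$ works for all of~\eqref{eq:slp1}--\eqref{eq:slp3} simultaneously, and verifying that strict monotonicity of slopes forces the $a_{i2}$-values of the ``short'' edges (those with $a_{i1}=1$) to be pairwise distinct, so that they contribute at least the triangular sum $s(s+1)/2$ (respectively the arithmetic-progression sum above) to $|b_2|$.
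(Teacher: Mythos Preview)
Your argument is correct. The paper does not give its own proof of this proposition: it is quoted in the preliminaries section with the remark that the proofs can be found in~\cite{BKa}. Your choice $s=|\{i:a_{i1}=1\}|$ and the use of strict monotonicity of the ratios $a_{i2}/a_{i1}$ to force the $a_{i2}$-values of the short edges to be pairwise distinct (hence summing to at least $1+2+\cdots+s$, respectively to at least $a+(a+m)+\cdots+(a+(s-1)m)$ in the sublattice case) is the natural approach and almost certainly what the cited proof does as well. The step ``small $\mathbf f_1$-step $>1$ implies every edge vector has $a_{i1}\ge 2$'' is justified because edge vectors, being differences of vertices, lie in the sublattice, and by definition of the small step every such vector has $\mathbf f_1$-coordinate divisible by that step.
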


Let $(\mathbf o;\mathbf{f}_1,\mathbf{f}_2)$ be an affine frame of~$\zz$ 
and~$Q$ be a slope with respect to $(\mathbf f_1, \mathbf f_2)$.

\begin{definition}
We say that the frame $(\mathbf o;\mathbf{f}_1,\mathbf{f}_2)$ \emph{splits} 
the slope~$Q$, if
\begin{enumerate}
\item
one endpoint $\mathbf v = \mathbf o + v_1 \mathbf f_1 + v_2 \mathbf f_2$ of 
$Q$ satisfies
\begin{equation}
\label{eq:sf1}
v_1 < 0, \ v_2 > 0,
\end{equation}
while the other endpoint $\mathbf w = \mathbf o + w_1 \mathbf f_1 + w_2 
\mathbf f_2$ satisfies
\begin{equation}
\label{eq:sf2}
w_1 > 0, \ w_2 < 0;
\end{equation}
\item
there exists a point on $Q$ having both positive coordinates in the frame 
$(\mathbf o;\mathbf{f}_1,\mathbf{f}_2)$.
\end{enumerate}
\end{definition}
\begin{remark}
Obviously, a frame can only split a slope if the slope has at least one edge.
\end{remark}

Suppose that a frame $(\mathbf o; \mathbf f_1, \mathbf f_2)$ splits a 
slope~$Q$ and let~$\mathbf z$ be the point where~$Q$ meets the ray 
$\{\mathbf{o}+\lambda \mathbf{f}_1 \colon \lambda\ge 0\}$.  If there is a 
supporting line for~$Q$ passing through~$\mathbf z$ that forms an angle $\le 
\pi/4$ with the ray, we say that the frame $(\mathbf o; \mathbf f_1, \mathbf 
f_2)$ \emph{forms small angle} with the slope~$Q$.

\begin{proposition}
\label{pr:sa}
Suppose that an integer frame $(\mathbf o; \mathbf f_1, \mathbf f_2)$ splits a 
slope~$Q$; then the frame $(\mathbf o; \mathbf f_2, \mathbf f_1)$ splits it as 
well, and at least one of the frames forms small angle with~$Q$.  If there 
exists a point $\mathbf{y} = \mathbf o + y_1 \mathbf f_1 + y_2 \mathbf f_2 \in 
Q$ such that $y_2 > 0$ and $y_1 + y_2 \le 0$, then $(\mathbf o; \mathbf f_1, 
\mathbf f_2)$ forms small angle with $Q$.
\end{proposition}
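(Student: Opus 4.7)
The strategy is to interpret the slope $Q$ as the graph of a convex, strictly decreasing function in $(\mathbf f_1,\mathbf f_2)$-coordinates (the convexity coming from~\eqref{eq4-3} and the monotonicity from~\eqref{eq3-3}) and to re-express the small-angle condition as a numerical bound on the supporting slopes of that graph.

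First, I would note that swapping $\mathbf f_1$ and $\mathbf f_2$ merely interchanges the two coordinates of every point: the conditions \eqref{eq:sf1} and \eqref{eq:sf2} get swapped between the two endpoints, while the existence of a point with both coordinates positive is preserved. Since $Q$ is also a slope with respect to the swapped basis (as already observed), this shows $(\mathbf o;\mathbf f_2,\mathbf f_1)$ splits $Q$ whenever $(\mathbf o;\mathbf f_1,\mathbf f_2)$ does.

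For the small-angle dichotomy, let $\mathbf z'$ denote the point where $Q$ meets the ray $\{\mathbf o+\mu\mathbf f_2\colon \mu\ge 0\}$; it exists because $Q$ enters the positive open quadrant from the quadrant containing $\mathbf v$, and a short argument rules out $\mathbf z=\mathbf o$ or $\mathbf z'=\mathbf o$ (either would force a convex decreasing curve through $\mathbf o$ to avoid the open positive quadrant, violating item~(2) of the definition). Writing $[s_-,s_+]\subset(-\infty,0)$ and $[s'_-,s'_+]\subset(-\infty,0)$ for the intervals of supporting slopes of $Q$ at $\mathbf z$ and at $\mathbf z'$ respectively, the definition unwinds to: $(\mathbf o;\mathbf f_1,\mathbf f_2)$ forms a small angle with $Q$ iff $s_+\ge -1$, and (since angles with the $\mathbf f_2$-axis are complementary to angles with the $\mathbf f_1$-axis) $(\mathbf o;\mathbf f_2,\mathbf f_1)$ forms a small angle with $Q$ iff $s'_-\le -1$. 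Since $\mathbf z'$ lies strictly to the left of $\mathbf z$ on $Q$, monotonicity of one-sided derivatives of a convex function gives $s'_+\le s_-$; hence if $s_+<-1$ then $s'_-\le s'_+\le s_-\le s_+<-1$, and the swapped frame forms a small angle.

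For the last clause, from $y_2>0$ and $y_1+y_2\le 0$ I get $y_1\le -y_2<0<z_1$, so the secant slope from $\mathbf y$ to $\mathbf z$ satisfies
\[
\frac{0-y_2}{z_1-y_1}\;=\;-\frac{y_2}{z_1-y_1}\;\ge\;-\frac{y_2}{-y_1}\;\ge\;-1.
\]
The standard secant inequality for convex functions lower-bounds $s_-$ by this secant slope, so $s_+\ge s_-\ge -1$ and the original frame forms a small angle. The only real technical point I anticipate is making the ``$Q$ as a graph of a convex function'' description precise enough to invoke the secant/derivative monotonicity inequalities cleanly; this is routine given \eqref{eq3-3}--\eqref{eq4-3} but warrants a careful statement.
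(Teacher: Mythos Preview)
The paper does not actually prove Proposition~\ref{pr:sa}: Section~\ref{sec:prelim} explicitly states that the results about slopes, including this proposition, are quoted from~\cite{BKa} and that ``the proofs can be found in~\cite{BKa}.'' So there is no in-paper argument to compare against.

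Assessed on its own, your argument is correct. Interpreting $Q$ in $(\mathbf f_1,\mathbf f_2)$-coordinates as the graph of a piecewise-linear, strictly decreasing, convex function is exactly what \eqref{eq3-3}--\eqref{eq4-3} give you, and once that is in place everything reduces to standard facts about one-sided derivatives of convex functions. Your translation of ``angle $\le\pi/4$ with the $\mathbf f_1$-ray'' into $s_+\ge-1$, and of the corresponding condition for the swapped frame into $s'_-\le-1$, is accurate; the monotonicity $s'_+\le s_-$ (right derivative at the left point bounded by the left derivative at the right point) then yields the dichotomy immediately. The exclusion of $\mathbf z=\mathbf o$ (and $\mathbf z'=\mathbf o$) via condition~(2) of the splitting definition is the right observation, and it also shows $z_1>0$, which you use in the last clause. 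For that clause, the chain
\[
\frac{-y_2}{z_1-y_1}\ge\frac{-y_2}{-y_1}\ge-1
\]
is valid (the first inequality uses $z_1>0$, the second uses $y_1+y_2\le0$), and the secant-versus-one-sided-derivative inequality for convex functions gives $s_-\ge-1$, hence $s_+\ge-1$.

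The only place where a reader might want one more line is the existence and positivity of $z_1$ and $z'_2$: you assert this ``a short argument rules out $\mathbf z=\mathbf o$,'' but it is worth saying explicitly that if the graph hit the nonpositive $\mathbf f_1$-axis (i.e.\ $z_1\le0$), strict monotonicity would force the entire portion of $Q$ with positive second coordinate to lie in $\{x_1<0\}$, contradicting condition~(2). With that sentence added, the proof is complete.
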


\begin{theorem}\label{th3-6}
Suppose that an integer frame $(\mathbf o; \mathbf f_1, \mathbf f_2)$ splits 
an integer slope $Q$ having $N$ edges and the endpoints $\mathbf v = \mathbf o 
+ v_1 \mathbf f_1 + v_2 \mathbf f_2$ and $\mathbf w = \mathbf o + w_1 \mathbf 
f_1 + w_2 \mathbf f_2$ satisfying~\eqref{eq:sf1} and~\eqref{eq:sf2}.  Then 
there exist $s \in \z$ and $t \in \z$ such that
\begin{gather}
0\le s\le t,
\label{eq:3-6A}
\\
v_2 - s \ge 0,
\label{eq:3-6B}
\\
- v_1 < ts-\frac{s^2-s}{2}+(v_2 - s)(t+1),
\label{eq:3-6C}
\\
2N\le v_2 + w_1 - t + s.
\label{eq:3-6D}
\end{gather}
Moreover, if $(\mathbf o; \mathbf f_1, \mathbf f_2)$ forms small angle with 
$Q$, we have
\begin{equation}
\label{eq:3-6E}
2N\le
v_2+w_1-t+s-\Ceil{\frac{-w_{2}}{2}} + 1
.
\end{equation}
\end{theorem}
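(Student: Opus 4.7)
\medskip
\noindent\textbf{Proof plan.}
My plan is to partition $Q$ into two sub-slopes at an integer vertex and apply Proposition~\ref{pr:slp} to each, packaging the gain furnished by the splitting hypothesis into the parameters~$(s,t)$.

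Let $\mathbf u=\mathbf v_k$ be the last vertex of $Q$ with $v_{k,2}\ge 0$, so that $Q_L:=\mathbf v_0\dots\mathbf v_k$ sits in the closed upper half-plane (in frame coordinates) and $Q_R:=\mathbf v_k\dots\mathbf v_N$ sits in the closed lower half-plane; \eqref{eq:sf1}--\eqref{eq:sf2} force $0\le k<N$. Apply Proposition~\ref{pr:slp} to $Q_R$ directly to obtain an integer~$s_R$ with
\[
2(N-k)\le w_1-u_1+s_R, \qquad u_2-w_2\ge s_R(s_R+1)/2;
\]
apply it to $Q_L$ in the swapped basis~$(\mathbf f_2,\mathbf f_1)$ (permitted by the remark following~\eqref{eq4-3}) to obtain $s_L$ with
\[
2k\le (v_2-u_2)+s_L, \qquad u_1-v_1\ge s_L(s_L+1)/2.
\]
Summing yields the skeleton $2N\le (v_2-u_2)+(w_1-u_1)+s_L+s_R$.

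To reduce the skeleton to the target form~\eqref{eq:3-6D}, I would perform a short case analysis on the location of~$\mathbf u$ (Q1 vs.\ Q2) and on whether the middle edge~$\mathbf v_k\mathbf v_{k+1}$ passes through~Q1. In each case, the splitting hypothesis supplies additional constraints: for instance, if $u_1<0$, then the middle edge is forced to cross~Q1, which imposes a nontrivial lower bound on $a_{k+1,1}/|a_{k+1,2}|$ in terms of~$|u_1|/u_2$, and this bound absorbs the excess~$-u_1$ that otherwise spoils~$w_1-u_1\le w_1$. In each case one can introduce integers~$s,t$ satisfying~\eqref{eq:3-6A} and~\eqref{eq:3-6B} such that the skeleton collapses to $2N\le v_2+w_1-t+s$. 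The rewritten form $(t+1)v_2>-v_1+s(s+1)/2$ of~\eqref{eq:3-6C} then follows from the lower bound $u_1-v_1\ge s_L(s_L+1)/2$, once~$s$ is identified with a suitable shift of~$s_L$. For the small-angle refinement~\eqref{eq:3-6E}, the supporting-line condition at the crossing~$\mathbf z$ forces every edge of~$Q_R$ beyond~$\mathbf z$ to have slope magnitude~$\le 1$, i.e.\ $|a_{i2}|\le a_{i1}$; the distinct-slope requirement then turns this into an arithmetic-progression-style count of these edges that delivers exactly the extra~$-\lceil -w_2/2\rceil+1$ saving.

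The main obstacle will be the precise calibration of~$(s,t)$: the quadratic term~$-(s^2-s)/2$ in~\eqref{eq:3-6C} pins~$s$ to a specific combinatorial quantity (most likely the number of unit-$\mathbf f_1$ edges in a suitable portion of~$Q$, shifted by one), so identifying it with the parameter~$s_L$ produced by Proposition~\ref{pr:slp} and, at the same time, fitting all of~\eqref{eq:3-6A}--\eqref{eq:3-6D} with the same pair requires careful bookkeeping; the borderline configurations ($k=0$, $u_1<0$, or~$\mathbf u$ lying on one of the axes) must each be treated separately, and the small-angle case~\eqref{eq:3-6E} adds one more layer of accounting on top of this.
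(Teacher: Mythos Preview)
The paper does not contain a proof of Theorem~\ref{th3-6}: it is stated in Section~\ref{sec:prelim} among preliminary results imported from~\cite{BKa}, with the explicit remark at the start of that subsection that ``the proofs can be found in~\cite{BKa}.'' So there is no in-paper argument to compare your proposal against.

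On its own merits, your overall architecture---cut $Q$ at the last vertex $\mathbf u=\mathbf v_k$ with nonnegative $\mathbf f_2$-coordinate and apply Proposition~\ref{pr:slp} to each sub-slope (once in the swapped basis)---is the natural one and yields the skeleton $2N\le (v_2-u_2)+(w_1-u_1)+s_L+s_R$ without difficulty. But what you have written is explicitly a plan, not a proof, and the entire content of the theorem lies in the step you flag as ``the main obstacle'' and then do not carry out: the definition of $s$ and $t$ and the verification that a \emph{single} pair satisfies all of \eqref{eq:3-6A}--\eqref{eq:3-6D} simultaneously. Your heuristic that the splitting hypothesis ``absorbs the excess $-u_1$'' when $u_1<0$ is suggestive but is not an argument; in particular, you never say what $t$ is (one expects it to be something like the number of edges of $Q_L$, or a closely related count), and without that the quadratic constraint~\eqref{eq:3-6C} cannot even be checked. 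Likewise, for~\eqref{eq:3-6E} you correctly identify that the small-angle hypothesis forces $|a_{i2}|\le a_{i1}$ for edges beyond the crossing point $\mathbf z$, but the passage from that observation to the exact saving $-\lceil -w_2/2\rceil+1$ is asserted rather than shown. Until the case analysis is actually written out and $(s,t)$ are exhibited, this remains an outline.
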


\begin{corollary}
\label{cor3-6}
Under the hypotheses of Theorem~\ref{th3-6},
\begin{equation*}
2N\le v_2+w_1
,
\end{equation*}
and if $(\mathbf o; \mathbf f_1, \mathbf f_2)$ forms small angle with $Q$, 
then
\begin{equation*}
2N\le
v_2+w_1-\Ceil{\frac{-w_{2}}{2}} + 1
.
\end{equation*}
\end{corollary}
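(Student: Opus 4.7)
The plan is to derive the corollary as an immediate consequence of Theorem~\ref{th3-6}, without any further combinatorial work. Given that the hypotheses of the two statements coincide, Theorem~\ref{th3-6} directly supplies integers $s$ and $t$ satisfying \eqref{eq:3-6A}--\eqref{eq:3-6D} (and, under the small-angle hypothesis, also \eqref{eq:3-6E}). The only thing left to do is to bound the expression on the right-hand sides of \eqref{eq:3-6D} and \eqref{eq:3-6E} from above by an expression in which~$s$ and~$t$ no longer appear.

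The key observation is that \eqref{eq:3-6A} asserts $0\le s\le t$, so $s-t\le 0$. Substituting this into \eqref{eq:3-6D} yields
\begin{equation*}
2N\le v_2+w_1-t+s\le v_2+w_1,
\end{equation*}
which is the first inequality of the corollary. The remaining conditions~\eqref{eq:3-6B} and~\eqref{eq:3-6C} are not needed here; they exist only to certify the existence of the pair $(s,t)$ in Theorem~\ref{th3-6}, but contribute nothing to the bound we wish to extract.

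For the second inequality, we invoke the small-angle hypothesis and apply \eqref{eq:3-6E} in place of \eqref{eq:3-6D}, again using $s-t\le 0$:
\begin{equation*}
2N\le v_2+w_1-t+s-\Ceil{\frac{-w_2}{2}}+1\le v_2+w_1-\Ceil{\frac{-w_2}{2}}+1.
\end{equation*}
Since both steps rely only on \eqref{eq:3-6A} together with the bounds already furnished by Theorem~\ref{th3-6}, there is no real obstacle to the proof: the corollary is essentially a cosmetic restatement in which the auxiliary parameters~$s$ and~$t$, whose role was only technical in the proof of the theorem, are hidden by the trivial estimate $s-t\le 0$.
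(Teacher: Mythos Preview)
Your proof is correct and is exactly the intended derivation: the corollary follows from Theorem~\ref{th3-6} by discarding the auxiliary integers via the trivial bound $s-t\le 0$ coming from~\eqref{eq:3-6A}. The paper itself does not spell out a proof (these preliminary results are quoted from~\cite{BKa}), but your argument is the natural one.
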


\begin{theorem}
\label{th3-8}
Under the hypotheses of Theorem~\ref{th3-6}, if the vertices of~$Q$ belong to 
a proper sublattice of $\zz$, then
\begin{equation*}
2N \le v_2 + w_1 - 1
.
\end{equation*}
\end{theorem}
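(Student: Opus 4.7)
The plan is to revisit the conclusion of Theorem~\ref{th3-6} under the additional hypothesis that the vertices of $Q$ lie in a proper sublattice $\Lambda \subsetneq \zz$, and to show that the integers $s$ and $t$ produced by that theorem can always be chosen with $s < t$. Combined with inequality (\ref{eq:3-6D}), this strict inequality immediately yields $2N \le v_2 + w_1 - 1$, which is the desired statement.

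A natural first move is to apply Theorem~\ref{th3-6} as a black box to obtain some $s \le t$ satisfying (\ref{eq:3-6A})--(\ref{eq:3-6D}). If $s < t$ there is nothing left to do, so the heart of the matter is the boundary case $s = t$, and it is here that properness of $\Lambda$ must be used. I would split this boundary case according to the small $\mathbf f_1$-step $\sigma$ of $\Lambda$. When $\sigma \ge 2$, every edge vector $(a_{i1}, a_{i2})$ of the slope has $a_{i1}$ divisible by $\sigma$ and hence $a_{i1} \ge 2$, so Proposition~\ref{pr:slp} equation~(\ref{eq:slp4}) tightens the bound to $2N \le w_1 - v_1$; this already settles the claim whenever $v_1 + v_2 \ge 1$. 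In the complementary regime $v_1 + v_2 \le 0$, the point $\mathbf y = \mathbf v$ itself meets the hypothesis of the last sentence of Proposition~\ref{pr:sa}, so $(\mathbf o; \mathbf f_1, \mathbf f_2)$ forms small angle with $Q$ and the sharper estimate (\ref{eq:3-6E}) becomes available, which with $s = t$ and a lower bound on $-w_2$ should absorb the missing unit. The remaining subcase $\sigma = 1$ I would handle by invoking Propositions~\ref{pr:th1-11} and~\ref{pr:th1-12}: then $\Lambda$ admits a basis $(\mathbf f_1 - a \mathbf f_2, m \mathbf f_2)$ with $1 \le a \le m$ and $m = \det \Lambda \ge 2$, so the sharper estimate (\ref{eq:slp5}) of Proposition~\ref{pr:slp} replaces (\ref{eq:slp2}) and provides extra slack in $|b_2| = v_2 - w_2$, which can be converted into the missing unit in (\ref{eq:3-6D}).

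The main obstacle I foresee is reconciling two inequivalent parametrizations: Proposition~\ref{pr:slp} bounds $N$ through the differences $|b_1| = w_1 - v_1$ and $|b_2| = v_2 - w_2$, whereas Theorem~\ref{th3-6} repackages the bound using the hybrid $v_2 + w_1$. Transferring the unit of slack granted by properness of the sublattice between the two parametrizations appears to require a careful case analysis on the signs of $v_1 + v_2$ and $w_1 + w_2$, together with the symmetric application of Theorem~\ref{th3-6} to the frame $(\mathbf o; \mathbf f_2, \mathbf f_1)$ (legitimate by Proposition~\ref{pr:sa}) in the regimes where the original frame fails to form a small angle. In particular, the genuinely delicate scenarios are those in which $-w_2$ is small (so (\ref{eq:3-6E}) alone is not sharp enough) and $\sigma = 1$ (so (\ref{eq:slp4}) is unavailable); here the finer basis from Proposition~\ref{pr:th1-12} should have to do the work, and verifying that it actually does so in every residual configuration is the step I expect to occupy the bulk of the argument.
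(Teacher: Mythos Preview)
The paper does not prove Theorem~\ref{th3-8}; it is quoted from~\cite{BKa} as part of the preliminaries (the section opens with ``The proofs can be found in~\cite{BKa}''), so there is no in-paper argument to compare against. What follows is an assessment of your outline on its own terms.

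Your reduction to the case $s = t$ is sound: if Theorem~\ref{th3-6} returns $s < t$, then \eqref{eq:3-6D} already gives the claim. The difficulty is that in the boundary case $s = t$ you switch to Proposition~\ref{pr:slp}, which bounds $2N$ via $w_1 - v_1$ rather than $v_2 + w_1$, and these two quantities are not comparable in the regime $v_1 + v_2 \le 0$. Your fallback there is \eqref{eq:3-6E}, but with $s = t$ it yields only $2N \le v_2 + w_1 - \lceil -w_2/2 \rceil + 1$, which is short by one unit whenever $-w_2 \in \{1,2\}$. You write that ``a lower bound on $-w_2$ should absorb the missing unit,'' but the sublattice hypothesis does not force $-w_2 \ge 3$: the coordinates $v_i, w_i$ are measured relative to the frame origin $\mathbf o$, which need not lie in $\Lambda$, so properness of $\Lambda$ constrains only differences such as $w_1 - v_1$ and $v_2 - w_2$, not $w_2$ itself. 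The same obstruction recurs in your $\sigma = 1$ branch: the refinement \eqref{eq:slp5} buys slack in $v_2 - w_2$, and you have not explained how to convert that into a saving in $v_2 + w_1$.

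The likely resolution is not to treat Theorem~\ref{th3-6} as a black box. Its proof in~\cite{BKa} constructs $s$ and $t$ from a specific decomposition of the slope (splitting it at the axes of the frame and counting edge vectors in each piece), and the extra unit for Theorem~\ref{th3-8} presumably comes from observing that one step of that count is wasteful when all edge vectors lie in a proper sublattice. Your plan is a reasonable first approximation, but to close the residual configurations you identify you will almost certainly need to reopen the construction of $s$ and $t$ rather than layer Proposition~\ref{pr:slp} on top of the finished statement of Theorem~\ref{th3-6}.
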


There are four slopes naturally associated with a given convex polygon~$\m$.

Let $\m$ be an integer polygon in the plane.  Define
\begin{gather*}
\begin{array}{l}
\v=\max\{x_2:\;(x_1,x_2)\in\m\}, \\
\vl=\min\{x_1:\;(x_1,\v)\in\m\}, \\
\vp=\max\{x_1:\;(x_1,\v)\in\m\},
\end{array}
\begin{array}{l}
\n=\min\{x_2:\;(x_1,x_2)\in\m\}, \\
\nl=\min\{x_1:\;(x_1,\n)\in\m\}, \\
\np=\max\{x_2:\;(x_2,\n)\in\m\}, \\
\end{array}
\\
\begin{array}{l}
\l=\min\{x_1:\;(x_1,x_2)\in\m\}, \\
\ln=\min\{x_2:\;(\l,x_2)\in\m\}, \\
\lv=\max\{x_2:\;(\l,x_2)\in\m\}, \\
\end{array}
\begin{array}{l}
\p=\max\{x_1:\;(x_1,x_2)\in\m\}, \\
\pn=\min\{x_2:\;(\p,x_2)\in\m\}, \\
\pv=\max\{x_2:\;(\p,x_2)\in\m\}.
\end{array}
\end{gather*}
All these are integers.  Note that $(\nl,\n)$, $(\np,\n)$,
$(\vl,\v)$, $(\vp,\v)$, $(\l,\ln)$, $(\l,\lv)$, $(\p,\pn)$,
and $(\p,\pv)$ are (not necessarily distinct) vertices of $\m$.

Let us enumerate the vertices of~$\m$ starting from $\mathbf v_0 = (\l, \ln)$ 
and going in the positive direction until we come to $v_{N_4} = (\nl, \n)$.  
Clearly, the sequence $\mathbf v_0$, \dots, $\mathbf v_{N_4}$ gives rise to a 
slope with respect to the basis $(\mathbf e_1, \mathbf e_2)$.  We denote it 
by~$Q_4$.  Obviously,~$Q_4$ is an inclusion-wise maximal slope with respect to 
$(\mathbf e_1, \mathbf e_2)$ contained in the boundary of~$\m$.  Likewise, we 
define the slope $Q_1$ with respect to $(\mathbf e_2, - \mathbf e_1)$ having 
the endpoints $(\np, \n)$ and $(\p, \pn)$, the slope $Q_2$ with respect to $(- 
\mathbf e_1, - \mathbf e_2)$ having the endpoints $(\p, \pv)$ and $(\vp, \v)$, 
and the slope $Q_3$ with respect to $(- \mathbf e_2, \mathbf e_1)$ having the 
endpoints $(\vl, \v)$ and $(\l, \lv)$.  We call those \emph{maximal slopes} of 
the polygon~$P$ and denote by~$N_k$ the number of edges of~$Q_k$.

\begin{remark}
For each of the mentioned bases, the boundary of the polygon contains 
single-point maximal slopes apart from correspondent~$Q_k$.  However, we 
single~$Q_k$ out by explicitly indicating its endpoints.  For a given polygon, 
some of the maximal slopes~$Q_k$ may have but one vertex.
\end{remark}

Define
\begin{gather*}
M_1 =
\begin{cases}
0, & \text{if } \nl = \np,
\\
1, & \text{otherwise;}
\end{cases}
\ M_2 =
\begin{cases}
0, & \text{if } \pn = \pv,
\\
1, & \text{otherwise;}
\end{cases}
\\
M_3 =
\begin{cases}
0, & \text{if } \vl = \vp,
\\
1, & \text{otherwise;}
\end{cases}
\ M_4 =
\begin{cases}
0, & \text{if } \ln = \lv,
\\
1, & \text{otherwise.}
\end{cases}
\end{gather*}

\begin{proposition}
\label{pr:boundary}
Let~$\m$ be an $N$-gon; then each edge of~$\m$ either lies on a horizontal or 
a vertical line or it is the edge of exactly one of the maximal slopes 
of~$\m$; thus,
\begin{equation*}
N = \sum_{k = 1}^4 N_k + \sum_{k = 1}^4 M_k
.
\end{equation*}
\end{proposition}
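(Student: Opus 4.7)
The plan is to classify each edge of $\m$ by the direction of its tangent vector when $\partial\m$ is traversed counterclockwise, and to show that each edge is accounted for exactly once by the maximal slopes $Q_k$ and the axis-parallel indicators $M_k$. Since $\m$ is convex, the tangent direction rotates monotonically counterclockwise through $2\pi$ around the boundary; in particular, each unit direction is realised by at most one edge. Therefore every edge falls into exactly one of eight classes according to the signs of the components of its tangent vector: the four open quadrants produce (possibly empty) consecutive runs of slanted edges, and the four coordinate half-axes each produce at most one axis-parallel edge.

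Next, I would match each of the four slanted runs with one of the $Q_k$. Consider the edges whose tangent has sign pattern $(+,-)$; by monotone rotation these form a consecutive run along $\partial\m$. The vertex at which the tangent first becomes $(+,-)$ is, by the definitions of $\l$ and $\ln$, exactly $(\l,\ln)$; likewise the run ends at $(\nl,\n)$. The conditions~\eqref{eq3-3} and~\eqref{eq4-3} defining a slope with respect to the basis $(\mathbf{e}_1,\mathbf{e}_2)$ then hold verbatim: the signs of the edge coordinates are $(+,-)$, and the $2\times 2$ determinant of consecutive edge vectors is positive because the tangent rotates counterclockwise. So this run coincides with $Q_4$. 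The same identification, applied to the three remaining sign patterns $(+,+)$, $(-,+)$, $(-,-)$ with respect to the bases $(\mathbf{e}_2,-\mathbf{e}_1)$, $(-\mathbf{e}_1,-\mathbf{e}_2)$, $(-\mathbf{e}_2,\mathbf{e}_1)$, yields $Q_1$, $Q_2$, $Q_3$ respectively.

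Finally, the axis-parallel edges are handled by direct verification. A horizontal edge with tangent $(+,0)$, if any, must lie on the minimal line $x_2=\n$ by convexity, so its endpoints are $(\nl,\n)$ and $(\np,\n)$, and it exists precisely when $\nl\ne\np$, that is, when $M_1=1$. The three other axial cases are analogous and correspond to $M_2$, $M_3$, $M_4$. Adding up the contributions over the eight classes gives $N=\sum_{k=1}^4 N_k+\sum_{k=1}^4 M_k$. The main obstacle I anticipate is purely combinatorial: verifying that the endpoints of adjacent runs match up correctly, so that no edge is counted twice and none is omitted. This follows straight from the definitions of $\l,\ln,\n,\nl,\np,\p,\pn,\pv,\v,\vp,\vl,\lv$, but the bookkeeping must be carried out carefully for all eight transitions around $\partial\m$.
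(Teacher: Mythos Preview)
Your argument is correct and is exactly the natural one: partition the edges of the convex boundary by the sign pattern of their direction vectors under counterclockwise traversal, identify the four ``open-quadrant'' runs with $Q_1,\dots,Q_4$ via their prescribed endpoints, and account for the at most four axis-parallel edges by $M_1,\dots,M_4$. Note that the paper itself does not supply a proof of this proposition; it is quoted from~\cite{BKa} along with the other slope results in Section~\ref{sec:prelim}, so there is no ``paper's own proof'' to compare against here. Your write-up would serve perfectly well as that missing proof; the only polishing needed is the bookkeeping you already flag, namely checking that the eight transition vertices $(\l,\ln)$, $(\nl,\n)$, $(\np,\n)$, $(\p,\pn)$, $(\p,\pv)$, $(\vp,\v)$, $(\vl,\v)$, $(\l,\lv)$ are visited in this cyclic order under counterclockwise traversal, which is immediate from their definitions.
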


\begin{proposition}
\label{pr:th5-3}
Let $\m$ be a convex integer polygon and $(\mathbf o; \mathbf f_1, \mathbf 
f_2)$ be an integer frame such that $\mathbf f_1, \mathbf f_2 \in \{\pm 
\mathbf e_1, \pm \mathbf e_2\}$.  Suppose that $\mathbf o$ does not belong 
to~$\m$ and the rays $\{\mathbf{o} +\lambda
\mathbf{f}_j \colon \lambda\ge 0\}$ ($j=1,2$) split $\m$; then $(\mathbf o; 
\mathbf f_1, \mathbf f_2)$ splits $Q_k$, where
$$k=\left\{\begin{array}{lllll}
1,&\text{if}&(\mathbf f_1, \mathbf f_2)=(-\mathbf{e}_1,\mathbf{e}_2)&\text{or}&(\mathbf f_1, \mathbf f_2)=(\mathbf{e}_2,-\mathbf{e}_1),\\
2,&\text{if}&(\mathbf f_1, \mathbf f_2)=(-\mathbf{e}_2,-\mathbf{e}_1)&\text{or}&(\mathbf f_1, \mathbf f_2)=(-\mathbf{e}_1,-\mathbf{e}_2),\\
3,&\text{if}&(\mathbf f_1, \mathbf f_2)=(\mathbf{e}_1,-\mathbf{e}_2)&\text{or}&(\mathbf f_1, \mathbf f_2)=(-\mathbf{e}_2,\mathbf{e}_1),\\
4,&\text{if}&(\mathbf f_1, \mathbf f_2)=(\mathbf{e}_2,\mathbf{e}_1)&\text{or}&(\mathbf f_1, \mathbf f_2)=(\mathbf{e}_1,\mathbf{e}_2).\\
\end{array}\right.$$
\end{proposition}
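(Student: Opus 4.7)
The plan is to observe first that the eight $(k, (\mathbf f_1, \mathbf f_2))$ combinations in the table are related by symmetries of $\zz$: for each fixed $k$ the two listed bases differ only by a swap, which does not affect the notion of a slope; and the four values of $k$ correspond to unimodular reflections/rotations of the plane that preserve $\zz$ and permute the four maximal slopes consistently with the table. Therefore it suffices to treat one case, say $k = 4$ with $(\mathbf f_1, \mathbf f_2) = (\mathbf e_1, \mathbf e_2)$; I henceforth measure all coordinates in the frame $(\mathbf o; \mathbf e_1, \mathbf e_2)$.

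First I would extract a sharp geometric picture from the hypothesis. Because $\mathbf o \notin \m$ and the upward ray $\{\mathbf o + \mu \mathbf e_2 : \mu \ge 0\}$ splits $\m$, the convex chord $\m \cap \{x_1 = o_1\}$ is a nondegenerate segment lying strictly above $\mathbf o$, i.e.\ in $\{x_2 > o_2\}$; indeed, if it contained a point with $x_2 \le o_2$, convexity together with the upward-ray interior crossing would force $\mathbf o$ into $\m$. Symmetrically, $\m \cap \{x_2 = o_2\}$ is a nondegenerate segment in $\{x_1 > o_1\}$. In particular $\m$ meets both bounding rays of the first frame quadrant and also contains points with $x_1 < o_1$ and with $x_2 < o_2$.

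Next I locate the endpoints of $Q_4$. Existence of points of $\m$ with $x_1 < o_1$ gives $\l < o_1$. To show $\ln > o_2$, I argue by contradiction: if $\ln \le o_2$, then the segment of $\m$ joining $(\l, \ln)$ to any point of $\m \cap \{x_1 = o_1\}$ (which lies above $\mathbf o$) is contained in $\m$ by convexity and crosses the line $x_2 = o_2$ at some point with abscissa strictly less than $o_1$, contradicting the inclusion $\m \cap \{x_2 = o_2\} \subset \{x_1 > o_1\}$ established above. Hence $(\l, \ln)$ satisfies $\l - o_1 < 0$ and $\ln - o_2 > 0$, placing it in frame quadrant~II. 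The symmetric argument puts $(\nl, \n)$ in frame quadrant~IV, with $\nl - o_1 > 0$ and $\n - o_2 < 0$.

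Finally I produce a point of $Q_4$ with both frame coordinates strictly positive. By the slope definition, $x_1$ strictly increases and $x_2$ strictly decreases along $Q_4$ from $(\l, \ln)$ to $(\nl, \n)$, so $Q_4$ crosses each of the lines $x_1 = o_1$ and $x_2 = o_2$ exactly once. If both crossings coincide, they occur at $\mathbf o$, which is impossible since $\mathbf o \notin \m \supset Q_4$. If the line $x_2 = o_2$ is crossed first, then a subarc of $Q_4$ lies in the open third frame quadrant, so $\m$ has a point there; but $\m$ meets both bounding rays of the (opposite) first frame quadrant and does not contain $\mathbf o$, so Corollary~\ref{cor:Helly} forbids this. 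Therefore the line $x_1 = o_1$ is crossed first, and the subarc of $Q_4$ between the two crossings lies in the open first frame quadrant, giving the required point with both coordinates positive. This verifies that $(\mathbf o; \mathbf e_1, \mathbf e_2)$ splits $Q_4$. The main technical obstacle is the second step, where placing the endpoints in the correct frame quadrants with \emph{strict} inequalities on the off-axis coordinates requires carefully combining convexity, the non-containment $\mathbf o \notin \m$, and the exact location of the two cross-sections; once this is in place, the last step reduces to a clean application of Corollary~\ref{cor:Helly}.
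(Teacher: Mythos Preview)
The paper does not actually prove this proposition: Section~\ref{sec:prelim} states that the results about slopes, including Proposition~\ref{pr:th5-3}, are quoted from~\cite{BKa} and that ``the proofs can be found in~\cite{BKa}.'' So there is no in-paper proof to compare against.

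That said, your argument is correct and self-contained. The symmetry reduction to a single case is legitimate, since the eight listed pairs are exchanged by the unimodular maps $(x_1,x_2)\mapsto(\pm x_1,\pm x_2)$ and $(x_1,x_2)\mapsto(x_2,x_1)$, which permute the four maximal slopes exactly as the table prescribes. In the case $k=4$, your Step~2 is the key observation: because $\mathbf o\notin\m$ and each ray meets the interior of~$\m$, convexity forces $\m\cap\{x_1=o_1\}\subset\{x_2>o_2\}$ and $\m\cap\{x_2=o_2\}\subset\{x_1>o_1\}$. Your Step~3 then places $(\l,\ln)$ and $(\nl,\n)$ in the correct open frame quadrants with strict inequalities, and the contradiction argument there is sound. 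Step~4 is also fine; in fact once Step~2 is established you could shortcut the case analysis: the unique crossing of $Q_4$ with $x_1=o_1$ lies in $\m\cap\{x_1=o_1\}$ and hence has $x_2>o_2$, and likewise the crossing with $x_2=o_2$ has $x_1>o_1$, so monotonicity of the coordinates along $Q_4$ forces the former crossing to precede the latter and yields an arc in the open first frame quadrant directly, without invoking Corollary~\ref{cor:Helly}. Either route works.
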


\begin{proposition}
\label{pr:th5-1}
Let $\m$ be a $\Gamma$-polygon, $S_1$ be the large $\mathbf{e}_1$-step of 
$\Gamma$, and $S_2$ be the large $\mathbf{e}_2$-step of $\Gamma$.  Then
\begin{gather*}
\np-\nl\ge S_1M_1,\ \pv-\pn\ge S_2M_2,\\
\vp-\vl\ge S_1M_3,\ \lv-\ln\ge S_2M_4.
\end{gather*}
\end{proposition}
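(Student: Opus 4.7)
The plan is to observe that all four inequalities say the same thing up to symmetry: two vertices of~$\m$ lying on a common horizontal or vertical supporting line of its bounding box, when distinct, must be separated by a full large step of the lattice~$\Gamma$. By interchanging the roles of the axes and reversing orientations, it suffices to treat one of the four inequalities in detail, say $\np - \nl \ge S_1 M_1$, and then state that the remaining three follow by the identical argument with the vertex pairs $\{(\p,\pn),(\p,\pv)\}$, $\{(\vl,\v),(\vp,\v)\}$, and $\{(\l,\ln),(\l,\lv)\}$ playing the analogous roles.

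The trivial case $M_1 = 0$ is immediate: by definition $\nl = \np$, so both sides of the inequality vanish. The substantive case is $M_1 = 1$, i.e.~$\nl \ne \np$. Here I would use the observation, recorded in the paragraph introducing the $\v/\n/\l/\p$ notation, that $(\nl, \n)$ and $(\np, \n)$ are actual vertices of~$\m$; since $\m$ is a $\Gamma$-polygon, both of these points belong to~$\Gamma$.

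The key step is then that $\Gamma$, being a sublattice of~$\zz$, is a subgroup, so the difference
\begin{equation*}
(\np, \n) - (\nl, \n) = (\np - \nl)\,\mathbf e_1
\end{equation*}
also lies in~$\Gamma$. Thus $\np - \nl$ is a positive element of the subgroup $\{u \in \z \colon u\mathbf e_1 \in \Gamma\}$, whose positive generator is by definition the large $\mathbf e_1$-step~$S_1$. Any positive element of such a cyclic subgroup is at least its generator, so $\np - \nl \ge S_1 = S_1 M_1$, as desired.

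There is essentially no obstacle here: the entire content is a definition chase, and the only point requiring any care is to remember that the extremal points $(\nl,\n)$, $(\np,\n)$, and their analogues produced by the min/max constructions are automatically vertices of~$\m$ (hence $\Gamma$-points), a fact already recorded in the excerpt. No slope machinery, Helly-type argument, or invariant-factor computation is needed for this proposition.
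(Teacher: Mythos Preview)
Your proof is correct; this is exactly the natural definition chase, and there is no gap. The paper itself does not supply a proof of this proposition---it is listed in the preliminaries section among results quoted from~\cite{BKa}---so there is nothing further to compare against.
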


\section{Type III polygons}
\label{sec:iii}

In this section we prove Theorem~\ref{subtheorem-c} for type~III polygons.

\begin{lemma}
\label{lem8-1}
Given an integer $n \ge 2$ and a type~III$_n$ polygon $\m$, the following 
assertions hold:
\begin{enumerate}[(i)]
\item
The frame $((n,0);\mathbf{e}_2,-\mathbf{e}_1)$ splits $Q_1$.
\item
The frame $((n,n);-\mathbf{e}_2,-\mathbf{e}_1)$ splits $Q_2$.
\item
The following inequalities hold:
\begin{gather}
\l\ge0,
\label{eq1-lem8-1}
\\
\vp\le n-1,
\label{eq2-lem8-1}
\\
\np\le n-1,
\label{eq3-lem8-1}
\\
\p\ge n+1,
\label{eq4-lem8-1}
\\
\n<\pn<0.
\label{eq5-lem8-1}
\end{gather}
\item
The intersection of $\m$ with the open half-plane $x_1<n$ is contained in the 
slab $0\le x_1<n$.   All the vertices of $\m$ belonging to the closed 
half-plane $x_1\ge n$ lie on the lines
\begin{equation}\label{eqpr-lem8-1}
x_2=k\qquad(k=1,\ldots,n-1),
\end{equation}
each line containing at most one vertex.
\item
If the vertices of~$\m$ belong to a $(1,n)$-lattice~$\Gamma$, then the large
$\mathbf{e}_2$- and $\mathbf{e}_1$-steps of $\Gamma$ are greater then or equal 
to $2$.
\end{enumerate}
\end{lemma}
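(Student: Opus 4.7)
The plan is to derive all five assertions from the segment-splitting conditions of the type~III$_n$ definition together with the absence of $n\mathbb Z^2$-points in~$\m$. I would establish (iii) first, then read off (i) and (ii), and finally use (iii) to pin down the structural claims in (iv) and (v).

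For (iii), I start with the elementary observation that if a segment $[\mathbf a,\mathbf b]$ with $\mathbf a,\mathbf b\in n\mathbb Z^2$ splits~$\m$, then the cross-section of~$\m$ by the line through the segment is a strict sub-segment of $[\mathbf a,\mathbf b]$---strict because the endpoints lie in $n\mathbb Z^2$ and hence outside~$\m$. Applied to the three splitting segments and combined with the non-splitting of $x_1=0$, this yields $\l\ge 0$, $\p\ge n+1$, $\n<0$, $\v>n$, and confines each of $\m\cap\{x_2=0\}$, $\m\cap\{x_1=n\}$, $\m\cap\{x_2=n\}$ to the open strip $(0,n)$ in the free coordinate. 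The sharper bounds on $\np,\vp,\pn,\pv$ come from Corollary~\ref{cor:Helly}: at the corner $(n,0)$ the polygon meets both bounding rays of the upper-left angle (the upward ray by the $x_1=n$ crossing, the leftward ray by the $x_2=0$ crossing) and avoids the vertex, so it also avoids the opposite quadrant $\{x_1>n,\,x_2<0\}$; a symmetric application at $(n,n)$ excludes $\{x_1>n,\,x_2>n\}$. These exclusions force $\np,\vp\le n-1$ and trap the rightmost column $\{x_1=\p\}\cap\m$ in the horizontal slab between $x_2=0$ and $x_2=n$; incorporating the $n\mathbb Z^2$-freeness then yields the bounds on $\pn$ and $\pv$ demanded by (iii). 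Finally, $\np\le n-1<n+1\le\p$ makes the bottom row and the rightmost column disjoint, so $\pn>\n$.

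With (iii) in hand, assertions (i) and (ii) reduce to reading off frame coordinates. For (i): in the frame $((n,0);\mathbf e_2,-\mathbf e_1)$, the two endpoints of $Q_1$ acquire frame coordinates $(\n,\,n-\np)$ and $(\pn,\,n-\p)$, which by (iii) lie in the required second and fourth ``quadrants''; a point of $Q_1$ chosen strictly between its crossings with $x_2=0$ and $x_1=n$ has both frame coordinates positive, supplying the second splitting condition. The proof of (ii) is entirely analogous, using the corner $(n,n)$. For (iv), the first sentence is just $\l\ge 0$; for the vertex claim, the left-boundary function $f_L(x_2)$ of~$\m$ is convex on $[\n,\v]$ and satisfies $f_L(\n)\le\np\le n-1$ and $f_L(\v)\le\vp\le n-1$, so by convexity $f_L(x_2)<n$ throughout. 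Hence every horizontal cross-section of~$\m$ extends into $x_1<n$, and the only vertex of~$\m$ in a fixed row $x_2=k$ that can satisfy $x_1\ge n$ is the right endpoint of that row; the Helly constraints restrict such $k$ to $\{1,\dots,n-1\}$.

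For (v), I invoke Proposition~\ref{pr:th1-11}: for a $(1,n)$-lattice~$\Gamma$, the product of the small $\mathbf e_1$-step and the large $\mathbf e_2$-step equals $\det\Gamma=n$, and symmetrically with coordinates swapped. If the large $\mathbf e_2$-step were $1$, the small $\mathbf e_1$-step would equal $n$, so every vertex of~$\m$ would satisfy $x_1\in n\mathbb Z$; combined with (iii) this forces $\np,\vp\in n\mathbb Z\cap[0,n-1]=\{0\}$, so the left boundary of~$\m$ is the vertical segment from $(0,\n)$ to $(0,\v)$, which contains the $n\mathbb Z^2$-points $(0,0)$ and $(0,n)$---a contradiction. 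A symmetric argument disposes of a large $\mathbf e_1$-step equal to $1$: it would force every vertex to have $x_2\in n\mathbb Z$, hence $\pn\in n\mathbb Z$, but the interval prescribed for $\pn$ by (iii) has length less than~$n$ and contains no multiple of~$n$. The main technical hurdle throughout is organising the two Helly applications so that they simultaneously control $\np,\vp,\pn,\pv$; once (iii) is in place, each of (i), (ii), (iv), and (v) follows by a brief verification.
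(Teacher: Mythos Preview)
Your proof is correct and follows the same line as the paper's, which declares (iii) and (iv) ``obvious'', cites Proposition~\ref{pr:th5-3} for (i)--(ii), and handles (v) via Proposition~\ref{pr:th1-11} with essentially the same two contradictions you give. One caveat: the printed inequality $\pn<0$ in (iii) is a misprint---your own Helly argument at $(n,0)$ and $(n,n)$, as well as assertion~(iv), force $0<\pn<n$, and this is the bound actually needed to verify the splitting in (i) and to finish your argument for the large $\mathbf e_1$-step in (v). So where you write ``the interval prescribed for $\pn$ by (iii)'', make sure you are invoking your derived bound $0<\pn<n$ rather than the stated $(\n,0)$, whose length is not controlled.
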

\begin{proof}
Assertions (i) and (ii) follow from the definition of a type~III$_n$ polygon 
and Proposition~\ref{pr:th5-3}.  Assertions (iii) and (iv) are obvious.  
According to Proposition~\ref{pr:th1-11}, to prove (v), it suffices to show 
that the small $\mathbf e_1$- and $\mathbf e_2$-steps of $\Gamma$ are less 
then $n$.  As the vertex $(\p,\pn)$ belongs to $\Gamma$ and 
satisfies~\eqref{eq5-lem8-1}, we see that indeed the small $\mathbf{e}_2$-step 
of~$\Gamma$ is less than~$n$.  Further, it follow from~\eqref{eq2-lem8-1}, 
\eqref{eq3-lem8-1}, and~\eqref{eq5-lem8-1} that the vertices $(\vp,\v)$ and 
$(\np,\n)$ lie in the slab $0\le x_1<n$.  Suppose, contrary to our claim, that 
the small $\mathbf{e}_1$-step of $\Gamma$ equals $n$.  Then we see that 
$\vp=\np=0$ and consequently, $P$ contains the segment $[(0,\n),(0,\v)]$.  
However, we obviusly have $\n < 0$ and $\v > n$, so the segment contains the 
points $\mathbf{0}, (0,n)\in \nz$, which is impossible, as~$\m$ is free of 
$\nz$-points.
\end{proof}

\begin{lemma}
\label{lem8-2}
Suppose that $n \ge 3$.  Let $\Gamma$ and $b$ be a lattice and a number such 
that either $\Gamma=\zz$ and $b = 0$ or $\Gamma$ is a $(1,n/2)$-lattice with 
the basis $(\mathbf e_1+a\mathbf e_2, (n/2)\mathbf e_2)$, where the integer $a$ 
satisfies $1\le a\le n/2-1$, and $b = 1$.  Let $\m$ be a type~III$_n$ $N$-gon 
with the vertices belonging to $\Gamma$.  Then
\begin{equation}\label{eq1-lem8-2}
N\le2n+2-2b.
\end{equation}
\end{lemma}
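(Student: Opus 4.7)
The plan is to apply Proposition~\ref{pr:boundary} to write $N=\sum_{k=1}^4N_k+\sum_{k=1}^4M_k$ with $M_k\in\{0,1\}$, and to bound each slope-edge count $N_k$ using the tools of Section~\ref{sec:prelim} together with the structural facts supplied by Lemma~\ref{lem8-1}.

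For the right slopes $Q_1$ and $Q_2$, Lemma~\ref{lem8-1}(i),(ii) supplies splitting frames $((n,0);\mathbf{e}_2,-\mathbf{e}_1)$ and $((n,n);-\mathbf{e}_2,-\mathbf{e}_1)$. Converting the endpoint coordinates of $Q_2$ into this frame (so that $(\vp,\v)\mapsto(n-\v,n-\vp)$ and $(\p,\pv)\mapsto(n-\pv,n-\p)$) and applying Corollary~\ref{cor3-6} yields $2N_2\le(n-\vp)+(n-\pv)$, with a companion inequality for $2N_1$ involving $n-\np$ and $\pn$. Theorem~\ref{th3-6} in the small-angle regime is kept in reserve to supply the extra $\lceil-w_2/2\rceil+1$ correction, which is substantial because $-w_2=\p-n$ can be large. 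In the sublattice case ($b=1$), Theorem~\ref{th3-8} sharpens each of these bounds by~$1$.

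For the left slopes $Q_3$ and $Q_4$ no splitting frame is on offer, so Proposition~\ref{pr:slp} is applied directly after choosing the basis so that the short horizontal extent plays the role of $|b_1|$. Reparameterising $Q_3$ from $(\l,\lv)$ to $(\vl,\v)$ with basis $(\mathbf{e}_1,-\mathbf{e}_2)$, one has $|b_1|=\vl-\l$, and combining~\eqref{eq:slp1} with~\eqref{eq:slp3} (i.e.\ $s\le N_3$) gives $N_3\le\vl-\l\le n-1$ via Lemma~\ref{lem8-1}(iii). The slope $Q_4$ is handled analogously, yielding $N_4\le\nl-\l\le n-1$. In the sublattice case, Lemma~\ref{lem8-1}(v) supplies the large-step hypothesis needed for~\eqref{eq:slp4}, and the refined~\eqref{eq:slp5} (after expressing the sublattice basis $(\mathbf{e}_1+a\mathbf{e}_2,(n/2)\mathbf{e}_2)$ in the required $(\mathbf{f}_1-a\mathbf{f}_2,m\mathbf{f}_2)$ form) delivers the additional improvements.

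Summing the four slope bounds, adding $\sum M_k\le4$, and simplifying via the inequalities of Lemma~\ref{lem8-1}(iii) ($\l\ge0$, $\np,\vp\le n-1$, $\p\ge n+1$) should collapse to $N\le2n+2-2b$. The main obstacle is that the naive sum of the individual $N_k$-bounds, each of order $n$, overshoots the target by about~$2n$; the argument must therefore exploit the cancellations between terms such as $n-\vp$ in the $N_2$-bound and $\vl$ in the $N_3$-bound (linked by convexity through $\vl\le\vp$), and symmetrically between the $N_1$- and $N_4$-bounds. A small case analysis on the values of $M_k$ and on whether the small-angle clause of Theorem~\ref{th3-6} is triggered is expected to be needed to close the tight budget.
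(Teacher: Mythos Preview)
Your outline has a genuine gap: the summation you describe cannot be closed with the tools you name. After the cancellations you mention (pairing $n-\vp$ against $\vl$ via $\vp-\vl\ge M_3$, and symmetrically for $Q_1$/$Q_4$), the sum of the four slope bounds still contains the slack terms $s_3$ and $s_4$ from Proposition~\ref{pr:slp} applied to $Q_3$ and $Q_4$. These can each be as large as $n-1$, so the budget overshoots by roughly $n$ in $2N$, not by a constant that a case analysis on the $M_k$ could absorb. The small-angle correction $\lceil(\p-n)/2\rceil$ you keep ``in reserve'' does not help here: $\p$ is not tied to $s_3+s_4$ in any direct way, and in fact the paper's argument never invokes the small-angle clause for this lemma.

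What is actually needed---and what your plan omits---is inequality~\eqref{eq:3-6C} of Theorem~\ref{th3-6}, not just~\eqref{eq:3-6D} or its Corollary. Applied at the frames for $Q_1$ and $Q_2$, \eqref{eq:3-6C} yields upper bounds on $-\n$ and $\v-n$, and hence on $\v-\n$, in terms of the parameters $s_1,t_1,s_2,t_2$. On the other side, \eqref{eq:slp2} for $Q_3$ and $Q_4$ gives a \emph{lower} bound $\v-\n\ge\tfrac12 s_3(s_3+1)+\tfrac12 s_4(s_4+1)+M_4$. The proof proceeds by contradiction: assuming $2N\ge 4n+6-4b$ forces $s_3$ and $s_4$ to be large (via the summed edge count), which through the quadratic lower bound forces $\v-\n$ to be large; but the \eqref{eq:3-6C} upper bound (after some nontrivial algebra linking it to the same parameters) forbids this. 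Passing to Corollary~\ref{cor3-6} throws away~\eqref{eq:3-6C} and with it the only available control on the vertical extent $\v-\n$, so the argument cannot be completed along the lines you propose.
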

\begin{proof}
We begin by translating the geometrical constraints on $P$ into inequalities.

The frame $((n,0);\mathbf{e}_2,-\mathbf{e}_1)$ splits $Q_1$, so by 
Theorem~\ref{th3-6} there exist integers $s_1$ and $t_1$ such that
\begin{gather}
2N_1\le\pn-\np+n-t_1+s_1,
\label{eq2-lem8-2}
\\
-\np+n-s_1\ge 0,
\label{eq3-lem8-2}
\\
-\n<t_1s_1-\frac{s_1^2-s_1}{2}+(-\np+n-s_1)(t_1+1),
\label{eq4-lem8-2}
\\
0\le s_1\le t_1.
\label{eq5-lem8-2}
\end{gather}
Likewise, $((n,n);-\mathbf{e}_2,-\mathbf{e}_1)$ splits $Q_2$, so there exist 
integers~$s_2$ and~$t_2$ such that
\begin{gather}
2N_2\le-\pv-\vp+2n-t_2+s_2,
\label{eq6-lem8-2}
\\
-\vp+n-s_2\ge 0,
\label{eq7-lem8-2}
\\
\v-n<t_2s_2-\frac{s_2^2-s_2}{2}+(-\vp+n-s_2)(t_2+1),
\label{eq8-lem8-2}
\\
0\le s_2\le t_2.
\label{eq9-lem8-2}
\end{gather}

As~$Q_3$ is a slope with respect to~$(\mathbf{e}_1,-\mathbf{e}_2)$, by 
Proposition~\ref{pr:slp} there exists $s_3 \in \z$ such that
\begin{gather}
2N_3\le\vl-\l+s_3,
\label{eq10-lem8-2}
\\
\v-\lv\ge\frac12s_3(s_3+1),
\label{eq11-lem8-2}
\\
0\le s_3\le N_3.
\label{eq12-lem8-2}
\end{gather}

Likewise, applying Proposition~\ref{pr:slp} to~$Q_4$ and 
$(\mathbf{e}_1,\mathbf{e}_2)$, we conclude that there exists $s_4 \in \z$ such 
that
\begin{gather}
2N_4\le\nl-\l+s_4,
\label{eq13-lem8-2}
\\
\ln-\n\ge\frac12s_4(s_4+1),
\label{eq14-lem8-2}
\\
0\le s_4\le N_4.
\label{eq15-lem8-2}
\end{gather}
Further, by Proposition~\ref{pr:th5-1},
\begin{gather}
\np-\nl\ge(1+b)M_1,
\label{eq16-lem8-2}
\\
\pv-\pn\ge(1+b)M_2,
\label{eq17-lem8-2}
\\
\vp-\vl\ge(1+b)M_3,
\label{eq18-lem8-2}
\\
\lv-\ln\ge(1+b)M_4,
\label{eq19-lem8-2}
\\
\l\ge bM_4.
\label{eq20-lem8-2}
\end{gather}
Indeed, if $b = 0$, \eqref{eq16-lem8-2}--\eqref{eq19-lem8-2} immediately 
follow from the proposition.  Suppose that $b = 1$; then the large $\mathbf 
e_2$-step of $\Gamma$ is $n/2 \ge 2$, and as $a \ge 1$, by virtue 
of~Proposition~\ref{pr:th1-12} we have $\mathbf{e}_1\notin\Gamma$, and 
consequently, the large $\mathbf{e}_2$-step of $\Gamma$ is~$\ge 2$ as well.

By Lemma~\ref{lem8-1}, we have $\l \ge 0$, so to prove \eqref{eq20-lem8-2} it 
suffices to show that $M_4=0$ provided that $b=1$ and $\l=0$.  Indeed, in this 
case $\Gamma$ has large $\mathbf{e}_2$-step $n/2$, so every other point of 
$\Gamma$ lying on the line $x_1=0$ belongs to~$\nz$.  Therefore, $\m$ cannot 
have two vertices on this line and $M_4=0$ if $\l=0$.

To prove the lemma, we argue by contradiction, assuming that
\begin{equation}\label{eq1'-lem8-2}
2N\ge 4n+6-4b.
\end{equation}

Summing~\eqref{eq2-lem8-2}, \eqref{eq6-lem8-2}, \eqref{eq10-lem8-2}, and 
\eqref{eq13-lem8-2} and subsequently using 
\eqref{eq16-lem8-2}--\eqref{eq20-lem8-2}, we obtain:
\begin{multline*}
2N=\sum_{k=1}^42N_k+\sum_{k=1}^42M_k
\\
\le 3n+s_1+s_2+s_3+s_4-t_1-t_2
\\
-(\np-\nl)-(\pv-\pn)-(\vp-\vl)
\\-2\l+2M_1
+2M_2+2M_3+2M_4
\\
\le 3n+s_1+s_2+s_3+s_4-t_1-t_2
\\
+(1-b)M_1+(1-b)M_2+(1-b)M_3+(2-2b)M_4.
\end{multline*}
Comparing this with~(\ref{eq1'-lem8-2}), we deduce
\begin{multline}
\label{eq21-lem8-2}
n-s_1-s_2-s_3-s_4+t_1+t_2
\\
-(1-b)M_1-(1-b)M_2-(1-b)M_3-(2-2b)M_4+6-4b\le 0
.
\end{multline}

Now we use~\eqref{eq4-lem8-2} and~\eqref{eq8-lem8-2} to estimate $\v-\n$ from 
above:
\begin{multline}
\label{eq22-lem8-2}
\v-\n<n+t_1s_1-\frac{s_1^2-s_1}{2}+t_2s_2-\frac{s_2^2-s_2}{2}
\\
+(-\np+n-s_1)(t_1+1)
+(-\vp+n-s_2)(t_2+1).
\end{multline}

Let us estimate~$\np$ and~$\vp$.  Using \eqref{eq16-lem8-2}, 
\eqref{eq13-lem8-2}, \eqref{eq15-lem8-2}, and \eqref{eq1-lem8-1}, we obtain
\begin{multline*}
\np\ge\nl+(1+b)M_1\ge2N_4+\l-s_4+(1+b)M_1
\\
\ge s_4+\l+(1+b)M_1
\ge s_4+M_1,
\end{multline*}
whence
\begin{equation}\label{eq23-lem8-2}
-\np+n-s_1\le n-s_1-s_4-M_1.
\end{equation}
Incidentally, note that the left-hand side is nonnegative by virtue 
of~\eqref{eq3-lem8-2}, so
\begin{equation}\label{eq24-lem8-2}
n-s_1-s_4-M_1\ge0.
\end{equation}
Likewise, from \eqref{eq18-lem8-2}, \eqref{eq11-lem8-2}, \eqref{eq12-lem8-2}, 
and \eqref{eq1-lem8-1} we derive
\begin{equation}
\label{eq25-lem8-2}
-\vp+n-s_2\le n-s_2-s_3-M_3,
\end{equation}
which together with~\eqref{eq7-lem8-2}
implies
\begin{equation}
\label{eq26-lem8-2}
n-s_2-s_3-M_3\ge0.
\end{equation}

As $t_1+1>0$ and $t_2+1>0$, we can use \eqref{eq23-lem8-2} and 
\eqref{eq25-lem8-2} to obtain from~\eqref{eq22-lem8-2}
\begin{multline}
\label{eq27-lem8-2}
\v-\n<n+t_1s_1-\frac{s_1^2-s_1}{2}+t_2s_2-\frac{s_2^2-s_2}{2}
\\
+(n-s_1-s_4-M_1)(t_1+1)+(n-s_2-s_3-M_3)(t_2+1).
\end{multline}

Now we estimate $\v-\n$ from below by summing \eqref{eq11-lem8-2}, 
\eqref{eq14-lem8-2}, and \eqref{eq19-lem8-2}:
\begin{equation}
\label{eq28-lem8-2}
\v-\n\ge(1+b)M_4+\frac12s_3(s_3+1)+\frac12s_4(s_4+1).
\end{equation}
Consider the second term on the right-hand side.  
Inequality~\eqref{eq21-lem8-2} gives
\begin{multline*}
s_3-1\ge(n-s_1-s_4+t_1-M_1)+(t_2-s_2)
\\
+(5-4b+bM_1-(1-b)M_2-(1-b)M_3-(2-2b)M_4).
\end{multline*}
The second term on the right-hand side is nonnegative by virtue 
of~\eqref{eq9-lem8-2} and the third one is also nonnegative (even positive), 
which is easily seen by separately checking $b=0$ and $b = 1$.  Consequently, 
we have
\begin{equation}
\label{eq29-lem8-2}
s_3-1\ge n-s_1-s_4+t_1-M_1.
\end{equation}
By virtue of~\eqref{eq24-lem8-2} we have $n-s_1-s_4+t_1-M_1\ge t_1\ge0$, so 
using~\eqref{eq29-lem8-2}, we get
\begin{multline*}
\frac12s_3(s_3+1)=
s_3 + \frac12s_3(s_3-1)
\\
\ge s_3 + \frac12(n-s_1-s_4+t_1-M_1+1)(n-s_1-s_4+t_1-M_1).
\end{multline*}
Set
\begin{equation*}
A=n-s_1-s_4-M_1, \quad B=t_1+1
\end{equation*}
($A$ and $B$ are integers) and continue as follows:
\begin{multline*}
\frac12s_3(s_3+1)\ge s_3 + \frac12(A+B)(A+B-1)
\\
=s_3 + \frac12(A^2-A)+ \frac 12 (B^2-B)+AB
\ge s_3 + \frac12(B^2-B)+AB.
\end{multline*}
For the terms on the right-hand side we have
\begin{multline*}
\frac12
(B^2-B)
= \frac12 (t_1^2+t_1)
=t_1s_1-\frac{s_1^2-s_1}{2}+\frac12(t_1-s_1)(t_1-s_1+1)
\\
\ge t_1s_1-\frac{s_1^2-s_1}{2}
\end{multline*}
(since $t_1-s_1\ge0$ according to~\eqref{eq5-lem8-2}), and
$$AB=(n-s_1-s_4-M_1)(t_1+1),$$
and we finally obtain
\begin{equation}
\label{eq30-lem8-2}
\frac12 s_3(s_3+1)\ge
s_3 + t_1s_1-\frac{s_1^2-s_1}{2}+(n-s_1-s_4-M_1)(t_1+1).
\end{equation}

One can estimate the third term on the right-hand side of \eqref{eq28-lem8-2} 
in much the same way by making use of~\eqref{eq5-lem8-2},  
\eqref{eq26-lem8-2}, and~\eqref{eq9-lem8-2}.  Eventually,
\begin{equation}
\label{eq31-lem8-2}
\frac12 s_4(s_4+1)\ge
s_4 + t_2s_2-\frac{s_2^2-s_2}{2}+(n-s_2-s_3-M_3)(t_2+1).
\end{equation}

Now, using~\eqref{eq30-lem8-2} and~\eqref{eq31-lem8-2}, we derive 
from~\eqref{eq28-lem8-2} the following estimate:
\begin{multline}
\label{eq32-lem8-2}
\v-\n\ge(1+b)M_4+s_3+s_4+t_1s_1-\frac{s_1^2-s_1}{2}+t_2s_2-\frac{s_2^2-s_2}{2}
\\
+(n-s_1-s_4-M_1)(t_1+1)+(n-s_2-s_3-M_3)(t_2+1).
\end{multline}

Comparing~\eqref{eq27-lem8-2} with~\eqref{eq32-lem8-2}, we obtain
$$-n+s_3+s_4+(1+b)M_4<0.$$
Summing this inequality with~\eqref{eq21-lem8-2}, we get
$$(t_1-s_1)+(t_2-s_2)+(6-4b-(1-b)M_1-(1-b)M_2-(1-b)M_3-(1-3b)M_4)<0.$$
However, the summands on the left-hand side are nonnegative.  Indeed, in the 
case of the first and the second ones it follows from~\eqref{eq5-lem8-2} 
and~\eqref{eq9-lem8-2}, respectively.  In the case of the third summand for $b 
= 0$ we have
\begin{multline*}
6-4b-(1-b)M_1-(1-b)M_2-(1-b)M_3-(1-3b)M_4
\\
=6-M_1-M_2-M_3-M_4\ge2,
\end{multline*}
while for $b=1$ we have
\begin{equation*}
6-4b-(1-b)M_1-(1-b)M_2-(1-b)M_3-(1-3b)M_4=2+2M_4\ge2.
\end{equation*}
This contradiction proves the lemma.
\end{proof}

\begin{lemma}
\label{lem8-3}
Suppose that $n \ge 4$ is even and $\m$ is a type~III$_n$ $N$-gon with 
vertices belonging to a $(1,n/2)$-lattice $\Gamma$ having the basis 
$(\mathbf{e}_1, (n/2)\mathbf{e}_2)$; then
$$N\le2n.$$
\end{lemma}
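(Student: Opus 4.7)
The plan is to adapt the proof of Lemma~\ref{lem8-2} in the case $b=1$, which was stated only for bases $(\mathbf{e}_1 + a\mathbf{e}_2,(n/2)\mathbf{e}_2)$ with $a \ge 1$; the present lemma is the excluded case $a=0$. Two observations drive the argument: one new structural fact about the vertices in $x_1 \ge n$, and a replacement of the bound $\lv - \ln \ge 2M_4$ by the much stronger $\lv - \ln \ge (n/2)M_4$.

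The first step is to show that $\pn = \pv = n/2$, and hence $M_2 = 0$. By Lemma~\ref{lem8-1}(iv) every vertex of~$\m$ in the closed half-plane $x_1 \ge n$ has $x_2 \in \{1,\dots,n-1\}$; since the vertices belong to $\Gamma = \z\times(n/2)\z$ their $x_2$-coordinates are multiples of $n/2$; as $n$ is even, the only value consistent with both constraints is $n/2$. Since $\p \ge n+1$ such a vertex exists, which forces $\pn = \pv = n/2$.

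The second step is to set up the Diophantine system of the proof of Lemma~\ref{lem8-2} verbatim, with $\pn$ and $\pv$ replaced by $n/2$. Proposition~\ref{pr:th5-1} now supplies $\np - \nl \ge M_1$, $\vp - \vl \ge M_3$ (weaker than in Lemma~\ref{lem8-2} because $\mathbf{e}_1 \in \Gamma$), $\pv - \pn = 0$ (from $M_2 = 0$), and $\lv - \ln \ge (n/2)M_4$ (much stronger, because the large $\mathbf{e}_2$-step of $\Gamma$ equals $n/2$); the estimate $\l \ge M_4$ carries over without change, since every other $\Gamma$-point on the line $x_1 = 0$ lies in $\nz$. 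Arguing by contradiction, assume $N \ge 2n+1$. The summation step of Lemma~\ref{lem8-2} yields
\[
s_1 + s_2 + s_3 + s_4 - t_1 - t_2 + M_1 + M_3 \ge n+2,
\]
and hence $s_3 + s_4 \ge n + 2 - M_1 - M_3$. The chain~\eqref{eq22-lem8-2}--\eqref{eq32-lem8-2} then goes through essentially unchanged---its only inputs are $s_i \le t_i$ for $i=1,2$ and $M_k \le 1$---except that the term $(1+b)M_4$ in~\eqref{eq28-lem8-2} becomes $(n/2)M_4$, so comparing the resulting upper and lower bounds on $\v - \n$ gives $s_3 + s_4 + (n/2)M_4 < n$. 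Combining the two inequalities yields $(n/2)M_4 < M_1 + M_3 - 2 \le 0$, contradicting $M_4, n \ge 0$.

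The main obstacle is that the weakened bounds $\np - \nl \ge M_1$ and $\vp - \vl \ge M_3$ consume exactly the safety margin of $2$ on which Lemma~\ref{lem8-2}'s contradiction relied, so the entire compensating gain must come from the reinforced $(n/2)M_4$ term. This forces a case-split in the final step: when $M_4 = 1$ the contradiction uses $n/2 \ge 2$ (the hypothesis $n \ge 4$), while when $M_4 = 0$ it reduces to the impossibility $M_1 + M_3 > 2$. A secondary task is to verify that none of the nonnegativity checks embedded in the passages~\eqref{eq29-lem8-2}--\eqref{eq31-lem8-2} breaks in the new regime, but these depend only on $s_i \le t_i$ and $M_k \le 1$ and so remain valid.
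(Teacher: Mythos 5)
Your proof is correct, but it takes a genuinely different route from the paper. The paper's own argument is a five-line lattice-line count: since $\Gamma=\z\times(n/2)\z$, all vertices in the slab $0\le x_1<n$ lie on the $n$ vertical lines $x_1=0,1,\dots,n-1$, with at most two per line and at most one on $x_1=0$ (every other $\Gamma$-point there is an $\nz$-point), giving at most $2n-1$; and by Lemma~\ref{lem8-1}(iv) the half-plane $x_1\ge n$ contributes at most one more vertex, since among the lines $x_2=k$ ($k=1,\dots,n-1$) only $x_2=n/2$ meets $\Gamma$. You instead rerun the Diophantine machinery of Lemma~\ref{lem8-2} with the weakened bounds $\np-\nl\ge M_1$, $\vp-\vl\ge M_3$ and the observations $M_2=0$, $\l\ge M_4$, $\lv-\ln\ge(n/2)M_4$. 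I checked the adaptation: the derivations of \eqref{eq23-lem8-2}, \eqref{eq25-lem8-2}, \eqref{eq29-lem8-2}--\eqref{eq32-lem8-2} indeed survive with the new constants, the summation gives $s_3+s_4\ge n+2-M_1-M_3$, and the $\v-\n$ comparison gives $s_3+s_4+(n/2)M_4<n$, which is incompatible since $(n/2)M_4\ge0\ge M_1+M_3-2$. (In fact your ``case split'' on $M_4$ and the reinforced coefficient $n/2$ are not even needed: $s_3+s_4\ge n$ versus the strict $s_3+s_4<n$ already closes the argument, because the comparison inequality is strict.) Two remarks: your structural observation $\pn=\pv=n/2$ is correct and follows from Lemma~\ref{lem8-1}(iv) together with $\p\ge n+1$, but note that it formally contradicts the inequality $\n<\pn<0$ in \eqref{eq5-lem8-1}, which is evidently misstated in the paper (it is inconsistent with assertions (iii)--(iv) taken together); you rightly do not rely on it. Overall both proofs are valid, but the paper's counting argument buys brevity, while yours shows the Lemma~\ref{lem8-2} machinery tolerates the case $a=0$ at the cost of a page of bookkeeping.
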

\begin{proof}
According to Lemma~\ref{lem8-1}, all the vertices of~$\m$ belonging to the 
open half-plane $x_1<n$ lie in the slab $0\le x_1<n$.  All the points 
of~$\Gamma$ belonging to this slab lie on the lines $x_1=i$ 
$(i=0,1,\ldots,n-1)$, and each of these lines contains at most two vertices 
except for $x_1 = 0$, which contain at most one (since every other point of 
$\Gamma$ lying on this line belongs to $\nz$).   This gives the maximum of 
$2n-1$ lying in the said half-plane.

It remains to prove that at most one vertex lies in the half-plane~$x_1\ge n$.  
According to Lemma~\ref{lem8-1}, each of the lines~\eqref{eqpr-lem8-1} 
contains at most one vertex, and there are no other vertices.  But among these 
only the line~$x_2=n/2$ has points belonging to~$\Gamma$.
\end{proof}

\begin{lemma}
\label{lem8-4}
Suppose that $n \ge 3$ and $\m$ is a type~III$_n$ $N$-gon with vertices 
belonging to a $(1,n)$-lattice $\Gamma$ having the basis  
$(\mathbf{e}_1+a\mathbf{e}_2,n\mathbf{e}_2)$, where $1\le a\le n-1$.  
Then
\begin{equation}\label{eq1-lem8-4}
N\le 2n-2.
\end{equation}
\end{lemma}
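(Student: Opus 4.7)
The plan is to adapt the proof of Lemma~\ref{lem8-2} by invoking the sharper bound~\eqref{eq:slp5} of Proposition~\ref{pr:slp} for the slopes $Q_3$ and $Q_4$, and by exploiting that the large $\mathbf{e}_2$-step of $\Gamma$ equals $n$.  Parts~(i) and~(ii) of Lemma~\ref{lem8-1} carry over unchanged, so Theorem~\ref{th3-6} still yields the inequalities \eqref{eq2-lem8-2}--\eqref{eq9-lem8-2} for $Q_1$ and $Q_2$.  For $Q_3$, viewed as a slope with respect to $(\mathbf{f}_1,\mathbf{f}_2)=(\mathbf{e}_1,-\mathbf{e}_2)$, the basis of $\Gamma$ becomes, after flipping the sign of the second generator, $(\mathbf{f}_1-a\mathbf{f}_2,n\mathbf{f}_2)$, and~\eqref{eq:slp5} gives
\[
2N_3\le\vl-\l+s_3, \qquad \v-\lv\ge a s_3+\tfrac{n}{2}s_3(s_3-1), \qquad 0\le s_3\le N_3.
\]
For $Q_4$, taken with respect to $(\mathbf{e}_1,\mathbf{e}_2)$, Proposition~\ref{pr:th1-12} rewrites the basis of $\Gamma$ as $(\mathbf{e}_1-(n-a)\mathbf{e}_2,n\mathbf{e}_2)$ with $1\le n-a\le n-1$, yielding the analogous triple with $(n-a)s_4+\tfrac{n}{2}s_4(s_4-1)$ in place of $a s_3+\tfrac{n}{2}s_3(s_3-1)$.

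By Proposition~\ref{pr:th1-11}, $\Gamma$ has large $\mathbf{e}_2$-step $n$ and large $\mathbf{e}_1$-step $n/\gcd(a,n)\ge 2$, so Proposition~\ref{pr:th5-1} gives $\np-\nl\ge 2M_1$, $\pv-\pn\ge nM_2$, $\vp-\vl\ge 2M_3$, $\lv-\ln\ge nM_4$.  Moreover every $\Gamma$-point on the line $x_1=0$ has the form $(0,kn)\in\nz$, so $\m$ has no vertex on this line and $\l\ge 1$.  Arguing by contradiction, I would assume $2N\ge 4n-2$.  Summing the four bounds on $2N_k$, adding $2\sum M_k$ via Proposition~\ref{pr:boundary}, and substituting the strengthened estimates together with $-2\l\le -2$, the computation parallel to~\eqref{eq21-lem8-2} produces
\[
n+t_1+t_2+(n-2)M_2-2M_4 \le s_1+s_2+s_3+s_4. \qquad (\star)
\]

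Next I would bound $\v-\n$ from above using~\eqref{eq4-lem8-2} and~\eqref{eq8-lem8-2} together with the refined estimates $\np\ge s_4+1+2M_1$ and $\vp\ge s_3+1+2M_3$ (derived as in~\eqref{eq23-lem8-2},~\eqref{eq25-lem8-2} but with $\l\ge 1$ and large step $\ge 2$ in place of $\l\ge 0$ and step $\ge 1$), and from below by combining the two new slope inequalities with $\lv-\ln\ge nM_4$ to obtain
\[
\v-\n \ge a s_3+(n-a)s_4+\tfrac{n}{2}\bigl(s_3(s_3-1)+s_4(s_4-1)\bigr)+nM_4.
\]

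The main obstacle will be combining $(\star)$ with these two estimates into a contradiction uniform in $n\ge 3$ and in every admissible configuration of $s_i$, $t_i$, $M_k$.  Following the monotonicity argument of~\eqref{eq30-lem8-2}--\eqref{eq31-lem8-2}, I would use $(\star)$ to lower-bound $\tfrac{n}{2}s_3(s_3-1)$ and $\tfrac{n}{2}s_4(s_4-1)$ in terms of the remaining variables; the crucial difference from Lemma~\ref{lem8-2} is that the coefficient $n/2$ (rather than $1/2$) amplifies the surplus forced by $(\star)$, so that after the cancellation done at the end of Lemma~\ref{lem8-2} the residual inequality reduces to a sum of nonnegative quantities plus a strictly positive constant.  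The most delicate case is $M_2=0$, $M_4=1$, where $(\star)$ yields the smallest surplus and the $nM_4$ bound on $\lv-\ln$ must compensate; when $M_2=1$, in contrast, $(\star)$ already forces $s_3+s_4\ge 2n-4$, and the cubic-in-$n$ lower bound on $\v-\n$ easily overwhelms the at-most-quadratic upper bound.
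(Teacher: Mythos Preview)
Your setup through $(\star)$ is correct, but the paper takes a materially simpler route that avoids the delicate endgame you sketch.

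First, for $Q_1$ and $Q_2$ the paper does not invoke Theorem~\ref{th3-6} with its parameters $s_1,t_1,s_2,t_2$ at all; since $\Gamma$ is a proper sublattice of $\zz$, Theorem~\ref{th3-8} applies directly and gives the clean bounds $2N_1\le\pn-\np+n-1$ and $2N_2\le-\pv-\vp+2n-1$.  After summing, your $(\star)$ collapses to the single inequality $s_3+s_4\ge n$, with no residual $t_i$'s to carry.

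Second---and this is the step you are missing---the paper does not bound $\v-\n$ from above via~\eqref{eq4-lem8-2} and~\eqref{eq8-lem8-2}.  Instead it observes geometrically that the triangle on the vertices $(\vp,\v)$, $(\np,\n)$, $(\p,\pn)$ lies in $\m$, meets the line $x_1=n$ in a segment trapped between two consecutive $\nz$-points, and has $\p-n\ge 1$; a similar-triangles estimate then gives $\v-\n\le n^2-1$.  Comparing this with the lower bound $\v-\n\ge as_3+(n-a)s_4+\tfrac{n}{2}\bigl(s_3(s_3-1)+s_4(s_4-1)\bigr)$ and substituting $s_4\ge n-s_3$ (together with $s_3\le \vl-\l\le n-2$) yields a single quadratic inequality in $s_3$ that has no solutions for $n\ge 4$.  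The case $n=3$ is disposed of by a short separate argument.

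Your plan to replay the full machinery of Lemma~\ref{lem8-2} is not obviously wrong, but the closing claim---that after cancellation ``the residual inequality reduces to a sum of nonnegative quantities plus a strictly positive constant''---is asserted, not checked.  In the worst configuration $M_1=M_2=0$, $M_4=1$ your $(\star)$ only gives $s_3+s_4\ge n-2+(t_1-s_1)+(t_2-s_2)$, so the analogue of~\eqref{eq29-lem8-2} loses a constant, and the $t_i$'s appearing both in the upper bound on $\v-\n$ and in $(\star)$ have to be balanced carefully; whether the $n/2$ amplification actually closes the gap uniformly in all parameters is not evident from your sketch.  The paper's two simplifications sidestep this bookkeeping entirely.
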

\begin{proof}
The frame $((n,0),\mathbf{e}_2,-\mathbf{e}_1)$ splits $Q_1$, and the vertices 
of~$Q_1$ belong to a proper subset of $\zz$, so by Theorem~\ref{th3-8} we have
\begin{equation}
\label{eq3-lem8-4}
2N_1\le\pn-\np+n-1.
\end{equation}
Applying the same theorem to $((n,n),-\mathbf{e}_2,-\mathbf{e}_1)$ and $Q_2$, 
we obtain
\begin{equation}
\label{eq4-lem8-4}
2N_2\le-\pv-\vp+2n-1.
\end{equation}

As $Q_3$ is a slope with respect to $(\mathbf{e}_1,-\mathbf{e}_2)$ and 
$(\mathbf{e}_1-a(-\mathbf{e}_2), n(-\mathbf{e}_2))$ is a basis of $\Gamma$ 
(Proposition~\ref{pr:th1-1}), we evoke Proposition~\ref{pr:slp} and conclude 
that there exists an integer~$s_3$ such that
\begin{gather}
2N_3\le\vl-\l+s_3,
\label{eq5-lem8-4}
\\
\v-\lv\ge\frac{2a+(s_3-1)n}{2}s_3,
\label{eq6-lem8-4}
\\
0\le s_3\le N_3.
\label{eq7-lem8-4}
\end{gather}
Likewise, applying Proposition~\ref{pr:slp} to $Q_4$, the basis 
$(\mathbf{e}_1,\mathbf{e}_2)$ and the basis $(\mathbf{e}_1-(n-a)\mathbf{e}_2, 
n\mathbf{e}_2)$ of~$\Gamma$, we obtain an integer $s_4$ such that
\begin{gather}
2N_4\le\nl-\l+s_4,
\label{eq8-lem8-4}
\\
\ln-\n\ge\frac{2(n-a)+(s_4-1)n}{2}s_4,
\label{eq9-lem8-4}
\\
0\le s_4\le N_4.
\label{eq10-lem8-4}
\end{gather}

Finally, we have
\begin{gather}
\np-\nl\ge 2M_1,
\label{eq11-lem8-4}
\\
\pv-\pn\ge nM_2,
\label{eq12-lem8-4}
\\
\vp-\vl\ge2M_3,
\label{eq13-lem8-4}
\\
\lv-\ln\ge nM_4,
\label{eq14-lem8-4}
\\
\l\ge1.
\label{eq15-lem8-4}
\end{gather}
Indeed, the large $\mathbf e_2$-step of $\Gamma$ is $n$, so by 
Proposition~\ref{pr:th5-1} we have \eqref{eq12-lem8-4} 
and~\eqref{eq14-lem8-4}.  Because $1\le a\le n-1$, we have 
$\mathbf{e}_1\notin\Gamma$ and the large $\mathbf{e}_1$-step of $\Gamma$ is 
greater then or equal to~2, so inequalities \eqref{eq11-lem8-4} and 
\eqref{eq13-lem8-4} hold by virtue of the same proposition.  Finally, 
\eqref{eq15-lem8-4} follows from the fact that $\l$ is nonnegative by 
Lemma~\ref{lem8-1}, and the fact that all the points of~$\Gamma$ lying on the 
line~$x_1=0$ belong to~$\nz$.

Assuming that~\eqref{eq1-lem8-4} does not hold, we have
\begin{equation}\label{eq1'-lem8-4}
2N\ge4n-2.
\end{equation}
Estimate $2N$ from above by summing \eqref{eq3-lem8-4},
\eqref{eq4-lem8-4}, \eqref{eq5-lem8-4}, and \eqref{eq8-lem8-4} and 
subsequently using \eqref{eq11-lem8-4}--\eqref{eq13-lem8-4} and 
\eqref{eq15-lem8-4}:
\begin{multline*}
2N=\sum_{k=1}^42N_k+\sum_{k=1}^42M_k
\\
\le 3n-2+s_3+s_4+(2M_1-(\np-\nl))
+(nM_2-(\pv-\pn))
\\
+(2M_3-(\vp-\vl))
-(n-2)M_2+2(M_4-\l)
\\
\le3n-2+s_3+s_4.
\end{multline*}
Comparing this with~\eqref{eq1'-lem8-4}, we get
\begin{equation}
\label{eq16-lem8-4}
s_3+s_4\ge n.
\end{equation}

Summing \eqref{eq6-lem8-4}, \eqref{eq9-lem8-4}, and \eqref{eq14-lem8-4} and 
discarding the nonnegative term~$nM_4$, we obtain a lower estimate of $\v-\n$:
\begin{equation}\label{eq17-lem8-4}
\v-\n\ge\frac{2a+(s_3-1)n}{2}s_3+\frac{2(n-a)+(s_4-1)n}{2}s_4.
\end{equation}

Now, a simple geometrical reasoning provides the upper estimate
\begin{equation}\label{eq19-lem8-4}
\v-\n\le n^2-1.
\end{equation}
Indeed, note that the triangle with the vertices $(\vp, \v)$, $(\np, \n)$, and 
$(\p, \pn)$ is contained in $\m$, so the segment being the intersection of the 
triangle with the line $x_1 = n$ lies between two adjacent points of $\nz$.  
As the distance from $(\p, \pn)$ to the line is greater than or equal to 1, it 
is not hard to see that the projection of the segment $[(\vp, \v), (\np, \n)]$ 
onto $x_1 = 0$ has length strictly less than $n^2$, whence~\eqref{eq19-lem8-4} 
follows.

Comparing~\eqref{eq17-lem8-4} and~\eqref{eq19-lem8-4}, we 
obtain
\begin{equation}
\label{eq20-lem8-4}
\frac{2a+(s_3-1)n}{2}s_3+\frac{2(n-a)+(s_4-1)n}{2}s_4\le
n^2-1.
\end{equation}
Let us estimate the second term on the left-hand side.  It follows 
from~\eqref{eq5-lem8-4} and~\eqref{eq7-lem8-4} that
$$2s_3\le\vl-\l+s_3,$$
whence using~\eqref{eq15-lem8-4} and~\eqref{eq2-lem8-1} we obtain
$$s_3\le\vl-\l\le\vp-1\le n-2.$$
Combining this with~\eqref{eq16-lem8-4} we deduce
$$0<n-s_3\le s_4.$$
Consequently, we have
$$\frac{2(n-a)+(s_4-1)n}{2}s_4\ge\frac{2(n-a)+(n-s_3-1)n}{2}(n-s_3),$$
and from~\eqref{eq20-lem8-4} we obtain
$$\frac{2a+(s_3-1)n}{2}s_3+\frac{2(n-a)+(n-s_3-1)n}{2}(n-s_3)-n^2+1\le0.$$
Transforming the left-hand side, we can write the inequality in the form
$$n\left(s_3+\frac{2a-n-n^2}{2n}\right)^2+\frac{a(n-a)}{n}+\frac14(n+1)(n-1)(n-4)\le0.$$
Clearly, this inequality cannot hold with $n\ge4$.  This contradiction proves 
the lemma in the case $n \ge 4$.

If $n = 3$, it follows from~\eqref{eq2-lem8-1} and~\eqref{eq3-lem8-1} that
\begin{equation}\label{eq21-lem8-4}
\vp+\np\le4,
\end{equation}
inequalities~\eqref{eq5-lem8-4}, \eqref{eq7-lem8-4}, and \eqref{eq15-lem8-4} 
give
$$\vl\ge s_3+\l+(2N_3-2s_3)\ge s_3+1,$$
and inequalities \eqref{eq8-lem8-4}, \eqref{eq10-lem8-4}, 
and~\eqref{eq15-lem8-4} similarly imply
$$\nl\ge s_4+1.$$
From these estimates and~\eqref{eq16-lem8-4} we derive
$$\vp+\np\ge\vl+\nl\ge s_3+s_4+2\ge n+2=5,$$
which contradicts~\eqref{eq21-lem8-4}.
\end{proof}

\begin{lemma}
\label{lem8-5}
Suppose that $n \ge 3$.  Let $\Gamma$ and $b$ be a lattice and a number such 
that either $\Gamma$ has invariant factors $(1,n)$ and $b = 0$ or $\Gamma$ has 
invariant factors $(1,n/2)$, and $b = 1$.  Suppose that the small $\mathbf 
e_1$-step of $\Gamma$ is greater then $1$, and let $\m$ be a type~III$_n$ 
$N$-gon with the vertices belonging to $\Gamma$.  Then
\begin{equation}\label{eq0-lem8-5}
N\le 2n-2+2b.
\end{equation}
\end{lemma}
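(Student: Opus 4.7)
The plan is to mimic the strategy of Lemma~\ref{lem8-4}: derive four slope inequalities, sum them together with $\sum_k 2M_k$ (Proposition~\ref{pr:boundary}), and analyze the outcome. The hypothesis that the small $\mathbf{e}_1$-step $s$ of $\Gamma$ exceeds~$1$ is what enables me to use the tighter bound~\eqref{eq:slp4} of Proposition~\ref{pr:slp} for $Q_3$ and $Q_4$ rather than the full triple of inequalities \eqref{eq:slp1}--\eqref{eq:slp3} used in the earlier lemmas.

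By Lemma~\ref{lem8-1}(i)--(ii), the frames $((n,0);\mathbf{e}_2,-\mathbf{e}_1)$ and $((n,n);-\mathbf{e}_2,-\mathbf{e}_1)$ split $Q_1$ and $Q_2$ respectively; since $\Gamma$ is a proper sublattice, Theorem~\ref{th3-8} yields $2N_1 \le \pn - \np + n - 1$ and $2N_2 \le 2n - \pv - \vp - 1$. Viewing $Q_3$ and $Q_4$ as slopes with respect to $(\mathbf{e}_1,-\mathbf{e}_2)$ and $(\mathbf{e}_1,\mathbf{e}_2)$ and invoking \eqref{eq:slp4}, I also get $2N_3 \le \vl - \l$ and $2N_4 \le \nl - \l$. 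Letting $S$ and $T$ denote the large $\mathbf{e}_1$- and $\mathbf{e}_2$-steps of $\Gamma$, I have $S \ge s \ge 2$ and $sT = \det\Gamma$ by Proposition~\ref{pr:th1-11}, while Proposition~\ref{pr:th5-1} provides $\np - \nl \ge SM_1$, $\pv - \pn \ge TM_2$, $\vp - \vl \ge SM_3$.

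Summing the four slope estimates, adding $\sum_{k=1}^4 2M_k$, substituting the step bounds, and using $\l \ge 0$ from Lemma~\ref{lem8-1}(iii) gives
\begin{equation*}
2N \le 3n - 2 + (2-S)M_1 + (2-T)M_2 + (2-S)M_3 + 2M_4.
\end{equation*}
Because $S \ge 2$, the $M_1$ and $M_3$ terms are non-positive and may be discarded. If $T \ge 2$, then $(2-T)M_2 \le 0$ as well, leaving $2N \le 3n - 2 + 2M_4 \le 3n$. For $b = 1$ this yields $N \le 2n$ at once; for $b = 0$, the hypotheses $s, T \ge 2$ together with $sT = n$ force $n \ge 4$, so $3n \le 4n - 4$ and $N \le 2n - 2$.

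The remaining case is $T = 1$, where the algebraic estimate is too slack. Here $\mathbf{e}_2 \in \Gamma$ together with the small $\mathbf{e}_1$-step being $s$ pins down $\Gamma = s\z \times \z$, with $s = n$ (if $b = 0$) or $s = n/2$ (if $b = 1$). Then the $\Gamma$-points in the slab $0 \le x_1 < n$, which contains all vertices of $\m$ with $x_1 < n$ by Lemma~\ref{lem8-1}(iv), lie on $x_1 = 0$ alone (resp.\ on $x_1 = 0$ and $x_1 = n/2$), so convexity caps such vertices at $2$ (resp.\ $4$); the half-plane $x_1 \ge n$ contributes at most $n - 1$ more vertices by the same lemma. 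A direct count then gives $N \le n + 1 \le 2n - 2$ when $b = 0$ and $N \le n + 3 \le 2n$ when $b = 1$ (using $n \ge 3$, and $n$ even when $b = 1$). This degenerate $T = 1$ case is the main subtlety: the summation argument no longer has any slack to spare and must be swapped out for a direct geometric count, which is made possible precisely by the rigidity of $\Gamma$ in this regime.
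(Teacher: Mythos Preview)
Your proof is correct and follows essentially the same summation strategy as the paper: apply Theorem~\ref{th3-8} to $Q_1$ and $Q_2$, the sharp bound~\eqref{eq:slp4} to $Q_3$ and $Q_4$, and combine with Proposition~\ref{pr:th5-1}. The one difference is in how the large $\mathbf e_2$-step $T$ is handled. The paper avoids your $T=1$ case split entirely: for $b=0$ it invokes Lemma~\ref{lem8-1}(v), which already forces $T\ge 2$ (so your $T=1$, $b=0$ branch is in fact vacuous), and for $b=1$ it simply uses the trivial bound $\pv-\pn\ge M_2$, absorbing the resulting extra $+bM_2$ term into the final arithmetic $2N\le 3n+b$, which still yields $N\le 2n-2+2b$. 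This makes the paper's argument a few lines shorter and uniform in~$b$, while your direct count in the $T=1$ case is a perfectly valid alternative that trades a citation of Lemma~\ref{lem8-1}(v) for an explicit geometric description of~$\Gamma$.
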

\begin{proof}
As in the proof of Lemma~\ref{lem8-4}, we have~\eqref{eq3-lem8-4} 
and~\eqref{eq4-lem8-4}.  Applying Proposition~\ref{pr:slp} to~$Q_3$ and~$Q_4$, 
we obtain
\begin{gather}
2N_3\le\vl-\l,
\label{eq3-lem8-5}
\\
2N_4\le\nl-\l.
\label{eq4-lem8-5}
\end{gather}
As the small $\mathbf e_1$-step of $\Gamma$ is greater then 1, we still 
have~\eqref{eq11-lem8-4} and~\eqref{eq13-lem8-4}.  By 
Proposition~\ref{pr:th5-1}, we also have
\begin{equation}
\label{eq7-lem8-5}
\pv-\pn\ge(2-b)M_2
\end{equation}
(in the case $b = 0$ this follows from assertion~(v) of Lemma~\ref{lem8-1}). 

Summing~\eqref{eq3-lem8-4}, \eqref{eq4-lem8-4}, \eqref{eq3-lem8-5}, and 
\eqref{eq4-lem8-5} and subsequently applying~\eqref{eq11-lem8-4}, 
\eqref{eq13-lem8-4}, \eqref{eq7-lem8-5}, and~\eqref{eq1-lem8-1}, we obtain the 
estimate
\begin{multline*}
2N=\sum_{k=1}^42N_k+\sum_{k=1}^42M_k
\\
\le
3n+(2M_1-(\np-\nl))+((2-b)M_2-(\pv-\pn))
\\
+(2M_3
-(\vp-\vl))+2(M_4-1)+bM_2
\le3n+b,
\end{multline*}
whence
$$N\le\frac32n+\frac{b}{2}=(2n-2+2b)+\frac{-n-3b+4}{2}.$$
Since $n\ge3$ and $b\ge0$, we can write
$$N\le(2n-2+2b)+\frac12,$$
which yields \eqref{eq0-lem8-5}.
\end{proof}

\begin{proof}[Proof of Theorem~\ref{subtheorem-c} for type~III polygons]
Let $\m$ be a type~III$_n$ $N$-gon.  By Lemma~\ref{lem8-2}, its number of 
vertices satisfies $N \le 2n + 2$.

Suppose that the vertices of~$\m$ belong to a $(1, n)$-lattice~$\Gamma$.  If 
the small $\mathbf e_1$-step of~$\Gamma$ is greater then~1, by 
Lemma~\ref{lem8-5} we have $N \le 2n - 2$.  If the small $\mathbf e_1$-step 
of~$\Gamma$ equals~1, by Proposition~\ref{pr:th1-12} this lattice admits a 
basis of the form $(\mathbf e_1 + a \mathbf e_2, n\mathbf e_2)$, where $0 \le 
a \le n-1$. According to assertion~(v) of Lemma~\ref{lem8-1}, we cannot have 
$a = 0$, so Lemma~\ref{lem8-4} provides the same bound on the number of 
vertices.

Finally, suppose that~$n$ is even and the vertices of~$\m$ belong to a $(1, 
n/2)$-lattice~$\Gamma$.  If the small $\mathbf e_1$-step of~$\Gamma$ is 
greater than~1, by Lemma~\ref{lem8-5} we have $N \le 2n$.  Otherwise, $\Gamma$ 
has a basis of the form $(\mathbf e_1 + a\mathbf e_2, (n/2)\mathbf e_2)$, 
where $0 \le a \le n/2 - 1$; then Lemma~\ref{lem8-2} gives the same estimate 
in case $a \ne 0$ and Lemma~\ref{lem8-3}, in case $a = 0$.
\end{proof}

\section{Type IV polygons}
\label{sec:iv}

In this section we prove Theorem~\ref{subtheorem-c} for type~IV polygons.

Throughout the section, we fix an integer $n \ge 3$.

\begin{lemma}\label{lem9-1}
Suppose that the line $x_1-x_2=n$ splits a type~IV$_n$ polygon; then so does 
one of the segments $[(0,-n),(n,0)]$ and $[(n,0),(2n,n)]$.
\end{lemma}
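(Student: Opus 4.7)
My plan is to work directly with the chord $\m \cap \ell$, where $\ell$ denotes the line $x_1 - x_2 = n$. Parametrising $\ell$ by $\alpha \mapsto (\alpha, \alpha - n)$, the segment $[(0, -n), (n, 0)]$ corresponds to $\alpha \in [0, n]$ and $[(n, 0), (2n, n)]$ to $\alpha \in [n, 2n]$. Since $\ell$ splits $\m$, the intersection $\m \cap \ell$ is a non-degenerate chord $[p_1, p_2]$ with $p_i = (\alpha_i, \alpha_i - n)$ and $\alpha_1 < \alpha_2$. Recalling that a segment lying on a line splits a polygon precisely when it contains the polygon's chord on that line, the lemma reduces to showing $[\alpha_1, \alpha_2] \subseteq [0, n]$ or $[\alpha_1, \alpha_2] \subseteq [n, 2n]$.

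First I would use the type IV$_n$ assumption that $\m$ meets neither $x_1 = -n$ nor $x_1 = 2n$ to get $-n < \alpha_1 \le \alpha_2 < 2n$ for free. The only two obstructions left are then (a) $\alpha_1 < 0$, and (b) $\alpha_1 < n < \alpha_2$ (the chord strictly straddles the point $(n, 0)$).

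To rule out (b), I would invoke the fact that interior points of a chord of a convex polygon are interior points of the polygon. Thus $\alpha_1 < n < \alpha_2$ would force $(n, 0) \in \mathrm{int}(\m)$, which would make the chord $\m \cap \{x_1 = n\}$ extend strictly below $x_2 = 0$, contradicting the splitting hypothesis for $[(n, 0), (n, n)]$ (which requires this chord to lie in $\{(n, x_2) : 0 \le x_2 \le n\}$).

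To rule out (a), the splitting hypothesis for $[(n, n), (2n, n)]$ supplies a point $q \in \m$ with $x_2$-coordinate $n$ and $x_1$-coordinate $\ge n$. By convexity $[p_1, q] \subseteq \m$, and an elementary computation shows that this segment crosses the line $x_1 = 0$ at a point whose $x_2$-coordinate is strictly negative (one can estimate it by $-n^2/(x_1(q) - \alpha_1)$); this contradicts the splitting hypothesis for $[\mathbf 0, (0, n)]$, which forces $\m \cap \{x_1 = 0\} \subseteq \{(0, x_2) : 0 \le x_2 \le n\}$. The only real subtlety is pinning down the operational meaning of ``splits'', namely that the chord on the line must lie inside the splitting segment; once that is in place, both reductions are short convexity-based contradictions.
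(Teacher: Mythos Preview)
Your argument is correct. The two obstructions (a) and (b) together with the slab bound $-n<\alpha_1<\alpha_2<2n$ do exhaust all bad cases, and each of your contradictions goes through: in (b) the point $(n,0)$ lands in $\m$, and in (a) the convex combination of $p_1$ with a point $q$ on $[(n,n),(2n,n)]$ hits the line $x_1=0$ at a point with negative $x_2$-coordinate (your numerator is $x_1(q)\alpha_1-n(x_1(q)+\alpha_1)<0$ since both summands are negative when $\alpha_1<0$ and $x_1(q)\ge n>-\alpha_1$).

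The paper's proof is organised differently and is shorter. It observes that, since $\m$ is free of $n\zz$-points, the chord on $\ell$ cannot contain $(0,-n)$ or $(n,0)$, so it lies entirely in exactly one of the three sub-segments $[(-n,-2n),(0,-n)]$, $[(0,-n),(n,0)]$, $[(n,0),(2n,n)]$. The first is then excluded in one stroke by Corollary~\ref{cor:Helly}: because $[\mathbf 0,(n,0)]$ and $[\mathbf 0,(0,n)]$ both split $\m$ while $\mathbf 0\notin\m$, the polygon has no points with both coordinates nonpositive, hence none on $[(-n,-2n),(0,-n)]$. So the paper handles your case (b) implicitly via $n\zz$-freeness (rather than via the splitting of $[(n,0),(n,n)]$), and replaces your explicit convex-combination computation in (a) by an appeal to the Helly corollary already set up in Section~\ref{sec:prelim}. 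Your route is more self-contained; the paper's is cleaner because it reuses existing machinery. A small simplification on your side: in (b) you can stop at $(n,0)\in\m$, which already contradicts $n\zz$-freeness, without invoking the $[(n,0),(n,n)]$ hypothesis.
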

\begin{proof}
All the points of the line $x_1 - x_2 = n$ belonging to the slab $-n+1\le 
x_1\le 2n-1$ lie on the segments $[(-n,-2n),(0,-n)]$, $[(0,-n),(n,0)]$, and $
[(n,0),(2n,n)]$, and as the polygon is free of $\nz$-points, exactly one of 
the segments splits it.  However, it cannot be the first one, because it 
follows from Corollary~\ref{cor:Helly} that the polygon has no points with 
both nonpositive coordinates.
\end{proof}
\begin{figure}
\label{fig:type4a}
\includegraphics{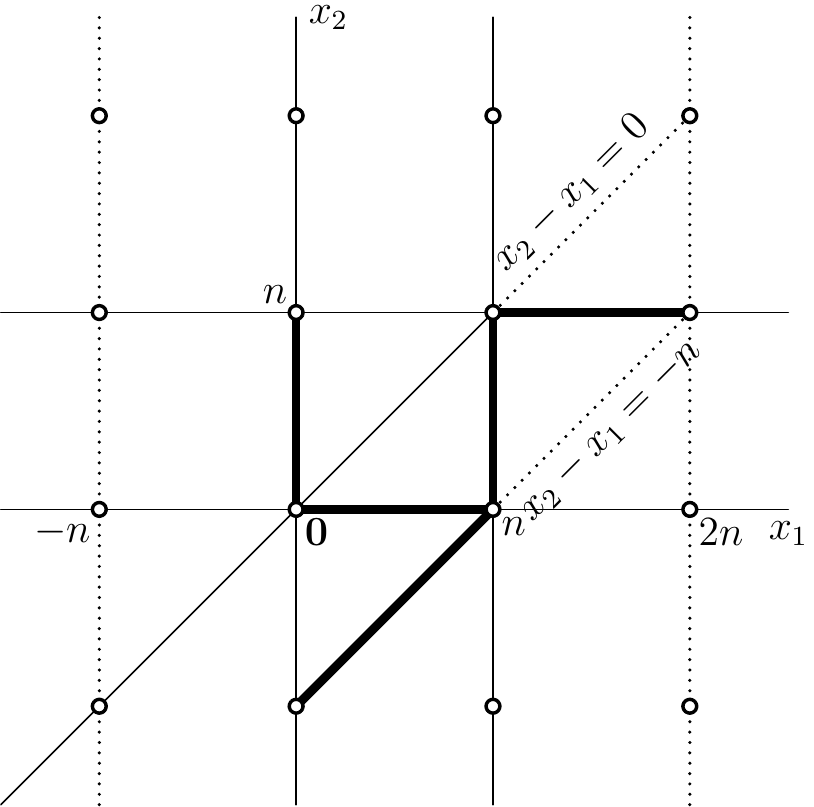}
\caption{The segments splitting the polygon in the hypothesis of 
Lemma~\ref{lem9-2} are thick, and $\m$ does not intersect dotted lines.  The 
inequalities \eqref{eq4-lem9-2}--\eqref{eq7-lem9-2} are obvious.}
\end{figure}
\begin{lemma}
\label{lem9-2}
Suppose that $\m$ is a type~IV$_n$ polygon and the segment $[(0,-n),(n,0)]$ 
splits it.  Then the following assertions hold:
\begin{enumerate}[(i)]
\item
The intersection of $\m$ with the half-plane $x_1 \ge n$ lies in the slab
\begin{equation}\label{eq1-lem9-2}
-n<x_2 - x_1<0
.
\end{equation}
\item
The frame $((n,0);\mathbf{e}_2,-\mathbf{e}_1)$ splits the slope $Q_1$ and 
forms small angle with it.
\item
The frame $((n,n);\mathbf{e}_1,-\mathbf{e}_2)$ splits the slope $Q_3$ and 
forms small angle with it.
\item
The frame $(\mathbf{0};\mathbf{e}_1,\mathbf{e}_2)$ splits the slope $Q_4$.
\item
The slope $Q_1$ has a vertex $\mathbf{v} = (v_1, v_2)$ 
satisfying
\begin{equation}
\label{eq3-lem9-2}
v_2 - v_1 \le -n-1
.
\end{equation}
\item
The following inequalities hold:
\begin{gather}
n<\vp\le\p\le 2n-1,
\label{eq4-lem9-2}
\\
n<\v\le\vp-1,
\label{eq5-lem9-2}
\\
-n<\l<0,
\label{eq6-lem9-2}
\\
0<\lv<n.
\label{eq7-lem9-2}
\end{gather}
\end{enumerate}
\end{lemma}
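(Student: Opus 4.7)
My plan is to derive the assertions in the order (i), then the straightforward parts of (vi), then (v), (ii), (iii), (iv), deferring the bound $\p\le 2n-1$ to the end as the main obstacle.

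For (i), I would observe that $[(0,-n),(n,0)]$ is the open segment between two consecutive $\nz$-points on the line $x_1-x_2=n$, so the hypothesis forces $\m\cap\{x_1-x_2=n\}$ to lie in this open segment; similarly, the splittings by $[\mathbf 0,(0,n)]$ and $[\mathbf 0,(n,0)]$ together with convexity give $\m\cap\{x_1=x_2\}\subseteq(\mathbf 0,(n,n))$. Neither line meets $\m$ in the half-plane $x_1\ge n$, and by convexity $\m\cap\{x_1\ge n\}$ lies on one side of each; the sides are fixed by the chord of $\m$ on $x_2=n$ (which lies in $n<x_1<2n$ by $[(n,n),(2n,n)]$ splitting), since that chord satisfies both $x_1-x_2>0$ and $x_1-x_2<n$. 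The geometric parts of (vi) are then routine: $\l<0$ from $[\mathbf 0,(0,n)]$ splitting and $\l>-n$ from the type IV$_n$ hypothesis; the chord of $\m$ on $x_2=0$ lies in $\{x_1>0\}$, so $(\l,\lv)$ with $\l<0$ cannot have $\lv\le 0$, giving $\lv>0$; an analogous convexity argument shows $\m\cap\{x_2\ge n\}\subseteq\{x_1>n\}$ (since $\m\cap\{x_1=n\}\subset\{0<x_2<n\}$, and $\m\cap\{x_2\ge n\}$ is convex, connected, and meets the chord on $x_2=n$ in $\{x_1>n\}$), which yields both $\lv<n$ (else $(\l,\lv)$ would force $\l>n$) and $\vp>n$; $\v>n$ follows from $[(n,n),(2n,n)]$ splitting, and $\v\le\vp-1$ from (i) at $x_1=\vp>n$.

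For (v), I would take the integer vertex $\mathbf v$ of $\m$ maximising $x_1-x_2$; the hypothesis provides points of $\m$ with $x_1-x_2>n$, so $v_1-v_2\ge n+1$. The outer-normal cone at $\mathbf v$ contains $(1,-1)$, placing $\mathbf v$ on the southeast slope $Q_1$. Assertions (ii), (iii), and (iv) then follow from Proposition~\ref{pr:th5-3} combined with Proposition~\ref{pr:sa} where a small angle is needed. In each case the required rays contain chords of $\m$ (on $x_1=n$ and $x_2=0$ for (ii), on $x_2=n$ and $x_1=n$ for (iii), on $x_2=0$ and $x_1=0$ for (iv)), while the frame origin is an $\nz$-point outside $\m$. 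For the small-angle conclusions, in (ii) I take $\mathbf y=\mathbf v$: then $y_2=n-v_1>0$ by (i) and $y_1+y_2=n+(v_2-v_1)\le -1$; in (iii) I take $\mathbf y=(\l,\lv)$, so $y_2=n-\lv>0$ and $y_1+y_2=\l-\lv<0$ thanks to $0<\lv<n$ and $\l<0$.

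The main obstacle I expect is the inequality $\p\le 2n-1$. The preceding work directly gives only $\p\le 3n-2$, since (i) yields $x_1<x_2+n\le\v+n$ with $\v\le 2n-1$. The caption of Figure~\ref{fig:type4a} suggests the bound should be visible from the picture, so I would close the gap by applying Corollary~\ref{cor:Helly} at the $\nz$-point $(2n,n)$: the chord of $\m$ on $x_2=n$ provides points on the ray $\{x_2=n,\ x_1\le 2n\}$, while (i) forbids $\m$ from the ray $\{x_1=2n,\ x_2\le n\}$; combined with $(2n,n)\notin\m$ and the chord on $x_2=n$ ending strictly before $x_1=2n$, a careful inspection of the angles at $(2n,n)$ should rule out $\m$ from the line $x_1=2n$ altogether, yielding $\p<2n$.
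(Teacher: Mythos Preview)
Your argument follows the same route as the paper's: part~(i) via the two lines $x_1-x_2=n$ and $x_1-x_2=0$, part~(v) by minimising $x_2-x_1$ over the vertices, parts~(ii)--(iv) via Proposition~\ref{pr:th5-3} together with Proposition~\ref{pr:sa} using $\mathbf y=\mathbf v$ from~(v) for~(ii) and $\mathbf y=(\l,\lv)$ for~(iii). For~(vi) the paper simply points to the figure, so your more detailed derivation is welcome and correct.

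Where you go astray is in calling $\p\le 2n-1$ the ``main obstacle''. It is not an obstacle at all: it is part of the definition of a type~IV$_n$ polygon. Definition~\ref{def:types} requires that $\m$ have no common points with the lines $x_1=-n$ and $x_1=2n$ (the ``$x_n=2n$'' printed there is a typo for ``$x_1=2n$''; compare the figure caption and the explicit use of the slab $-n+1\le x_1\le 2n-1$ in the proof of Lemma~\ref{lem9-4}). You already used the $x_1=-n$ half of this hypothesis to get $\l>-n$; the other half gives $\p<2n$, hence $\p\le 2n-1$, immediately.

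Your proposed Helly-type salvage at $(2n,n)$ would not in fact close the gap. From~(i) alone one only gets that $\m\cap\{x_1\ge 2n\}\subset\{x_2>n\}$; Corollary~\ref{cor:Helly} applied to the lines $x_1=2n$ and $x_2=n$ then merely reproduces this, since the open slab $-n<x_2-x_1<0$ contains no $\nz$-points and nothing else prevents $\m$ from extending into $\{x_1\ge 2n\}$ along it. The bound genuinely comes from the definition.
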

\begin{proof}
To prove (i), it suffices to observe that $\m$ cannot have common points with 
the segments $[(n,0), (2n, n)]$ and $[(n, n), (2n, 2n)]$.  Indeed, if $\m$ had 
common points with the former segment, by convexity it would containt the 
point $(n, 0)$; if it had common points with the latter, it would contain the 
point $(n, n)$ by Lemma~\ref{Helly} applied to $\m$ and the lines $x_2 - x_1 = 
0$ and $x_2 = n$.  Thus, the part of $\m$ contained in the half-plane $x_1 \ge 
n$ must lie between the lines $x_2 - x_1 = -n$ and $x_2 - x_1 = 0$.

Let us prove (v).  Clearly, the functional $x_2 - x_1$ attains its maximum 
on~$\m$ on a vertex~$\mathbf v \in Q_1$.  As the line $x_2 - x_1 = -n$ 
splits~$\m$, this minimum is less than~$n$, and~(v) follows.

The fact that the frames split correspondent slopes in assertions (ii)--(iv) 
follows from Proposition~\ref{pr:th5-3}.  To prove that 
$((n,0);\mathbf{e}_2,\mathbf{e}_1)$ forms small angle with $Q_1$, we apply 
Proposition~\ref{pr:sa} taking the vertex from assertion (v) as $\mathbf y$.  
To prove that $((n,n);\mathbf{e}_1,-\mathbf{e}_2)$ splits $Q_3$, we use the 
same theorem with $\mathbf y = (\l, \lv)$.

The inequalities in (vi) are is fairly intuitive, see Figure~\ref{fig:type4a}.
\end{proof}

\begin{lemma}
\label{lem9-3}
Let $\Gamma$ and $b$ be a sublattice of $\zz$ and a number such that either 
$\Gamma = \zz$ and $b = 0$ or $\Gamma$ has a basis of the form 
$(\mathbf{e}_1+a\mathbf{e}_2$, $(n/2)\mathbf{e}_2)$, where $1\le
a\le n/2-1$, and $b = 1$ (this is only possible if $n$ is even).  Let $\m$ be 
a type~IV$_n$ $N$-gon with vertices belonging to $\Gamma$, and suppose that 
the segment $[(0,-n),(n,0)]$ splits $\m$.  Then
\begin{equation}\label{eq1-lem9-3}
N\le 2n+2-2b.
\end{equation}
\end{lemma}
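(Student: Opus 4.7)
The plan is to mirror the proof of Lemma~\ref{lem8-2}: convert the splitting data of Lemma~\ref{lem9-2} into Diaphantine inequalities via Theorem~\ref{th3-6} and Proposition~\ref{pr:slp}, combine them with the side-edge bounds from Proposition~\ref{pr:th5-1} and the vertex-count identity of Proposition~\ref{pr:boundary}, and derive a contradiction from the assumption $2N\ge 4n+6-4b$.

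I would apply Theorem~\ref{th3-6} to the three splitting frames provided by Lemma~\ref{lem9-2}(ii)--(iv): $((n,0);\mathbf e_2,-\mathbf e_1)$ on $Q_1$, $((n,n);\mathbf e_1,-\mathbf e_2)$ on $Q_3$, and $(\mathbf 0;\mathbf e_1,\mathbf e_2)$ on $Q_4$. For $Q_1$ and $Q_3$ I would use the sharper small-angle bound \eqref{eq:3-6E}. The sign conditions \eqref{eq:sf1}--\eqref{eq:sf2} required by the theorem reduce in each case to the inequalities \eqref{eq4-lem9-2}--\eqref{eq7-lem9-2}, supplemented by $\n<0$ (a consequence of $[(0,-n),(n,0)]$ splitting $\m$) and $\ln>0$ (from Corollary~\ref{cor:Helly}, since $\l<0$ precludes common points of $\m$ with the third quadrant). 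For $Q_2$ no splitting frame is available, so Proposition~\ref{pr:slp} applied directly would give an integer $s_2$ with $2N_2\le\p-\vp+s_2$ and $\v-\pv\ge s_2(s_2+1)/2$. This produces six auxiliary integers $s_1,t_1,s_3,t_3,s_4,t_4$ together with $s_2$, and a system of inequalities analogous to \eqref{eq2-lem8-2}--\eqref{eq15-lem8-2}.

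Combining these with Proposition~\ref{pr:th5-1} to obtain $\np-\nl\ge(1+b)M_1$, $\pv-\pn\ge(1+b)M_2$, $\vp-\vl\ge(1+b)M_3$, $\lv-\ln\ge(1+b)M_4$ — the case $b=1$ being justified exactly as in Lemma~\ref{lem8-2}, since $1\le a\le n/2-1$ forces $\mathbf e_1\notin\Gamma$ by Proposition~\ref{pr:th1-12} — I would sum everything via Proposition~\ref{pr:boundary}, extract an upper bound on $2N$, and compare with the assumption $2N\ge 4n+6-4b$ to obtain a master inequality involving $s_i,t_i,M_k$ of the same flavour as \eqref{eq21-lem8-2}. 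As in Lemma~\ref{lem8-2}, I would then estimate a carefully chosen quantity in two conflicting ways — a natural candidate being a horizontal plus vertical extent such as $\pv+\vp-\l-\n$ — using the \eqref{eq:3-6C}-type upper bounds from the three splitting frames on one hand and lower bounds assembled from the $Q_2$-slope inequality and the $M_k$ estimates on the other, absorbing the quadratic terms $ts-(s^2-s)/2$ by the algebraic manipulation exploited in \eqref{eq30-lem8-2}--\eqref{eq31-lem8-2}.

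The main obstacle is the asymmetry introduced by having three splitting frames rather than two, each contributing its own pair of auxiliary integers. The small-angle corrections $-\lceil(\p-n)/2\rceil+1$ and $-\lceil(\v-n)/2\rceil+1$ coming from the $Q_1$ and $Q_3$ frames couple to the horizontal and vertical extent of $\m$, and must be fed back into the estimate using \eqref{eq4-lem9-2} and \eqref{eq5-lem9-2} before the final nonnegativity contradiction closes. Balancing these contributions so that the accumulated sum of manifestly nonnegative terms is forced to be negative, as in the closing paragraph of the proof of Lemma~\ref{lem8-2}, is where the bulk of the calculation will live.
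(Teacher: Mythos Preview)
Your plan is defensible in principle, but it is considerably heavier than the paper's argument, and the crucial ``balancing'' step remains only a hope. The paper's proof is \emph{direct}, not by contradiction, and dispenses with all auxiliary integers $s_i,t_i$ except~$s_2$: for $Q_1$ and $Q_3$ it uses only Corollary~\ref{cor3-6} (the small-angle form, available by Lemma~\ref{lem9-2}(ii),(iii)), and for $Q_4$ it uses Corollary~\ref{cor3-6}/Theorem~\ref{th3-8} to get simply $2N_4\le\nl+\ln-b$. Summing via Proposition~\ref{pr:boundary} and dropping the ceilings gives
\[
2N\le 3n+\tfrac92-4b+\Bigl(-\tfrac{\v}{2}+s_2+\pn+2M_2\Bigr),
\]
and the residual bracket is bounded by $n+\tfrac12$ using $\v\ge\pv+s_2(s_2+1)/2$, the slab constraint $\pv\le\p-1\le 2n-2$ from Lemma~\ref{lem9-2}(i),(vi), and the integrality trick $s_2^2-3s_2\ge-2$. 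Taking floors yields~\eqref{eq1-lem9-3}.

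So the quadratic juggling of \eqref{eq30-lem8-2}--\eqref{eq31-lem8-2} that you anticipate is entirely avoided: the two small-angle ceiling corrections $-\lceil(\p-n)/2\rceil$ and $-\lceil(\v-n)/2\rceil$ already carry the weight that the $t_i-s_i$ terms would in the Lemma~\ref{lem8-2} scheme, and the tight slab geometry of Lemma~\ref{lem9-2}(i) closes the estimate linearly. Your seven-parameter route may well go through, but you have not shown that your proposed two-sided bound on ``$\pv+\vp-\l-\n$'' (or whatever quantity you settle on) actually contradicts the master inequality; given how delicate that step is in Lemma~\ref{lem8-2}, this is the real content and cannot be waved at.
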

\begin{proof}
By Lemma~\ref{lem9-2}, the frame $((n,0);\mathbf{e}_2,-\mathbf{e}_1)$ forms 
small angle with the slope~$Q_1$, so by Corollary~\ref{cor3-6} we have
\begin{equation}
\label{eq2-lem9-3}
2N_1\le\pn-\np+n-
\Ceil{\frac{\p-n}{2}} + 1.
\end{equation}

Likewise, as $((n,n);\mathbf{e}_1,-\mathbf{e}_2)$ forms small angle 
with~$Q_3$, we obtain
\begin{equation}\label{eq6-lem9-3}
2N_3\le\vl-\lv-
\Ceil{\frac{\v-n}{2}}
+ 1
.
\end{equation}

Applying Proposition~\ref{pr:slp} to the basis $(-\mathbf{e}_1,-\mathbf{e}_2)$ 
and the slope $Q_2$, we see that there exists an integer $s_2$ such that
\begin{gather}
2N_2\le\p-\vp+s_2,
\label{eq3-lem9-3}
\\
\v-\pv\ge\frac{s_2^2+s_2}{2},
\label{eq4-lem9-3}
\\
0\le s_2\le N_2.
\label{eq5-lem9-3}
\end{gather}

As the frame $(\mathbf{0};\mathbf{e}_1,\mathbf{e}_2)$ splits $Q_4$, by 
Corollary~\ref{cor3-6} and Theorem~\ref{th3-8} we have
\begin{equation}
\label{eq7-lem9-3}
2N_4\le\nl+\ln-b.
\end{equation}

Finally, by Proposition~\ref{pr:th5-1} we have
\begin{gather}
\np-\nl\ge(1+b)M_1,
\label{eq8-lem9-3}
\\
\pv-\pn\ge(1+b)M_2,
\label{eq9-lem9-3}
\\
\vp-\vl\ge(1+b)M_3,
\label{eq10-lem9-3}
\\
\lv-\ln\ge(1+b)M_4.
\label{eq11-lem9-3}
\end{gather}
Indeed, if $b=0$, these inequalities immediately follow from the proposition.  
If $b = 1$,  $\Gamma$ has large $\mathbf{e}_2$-step $n/2\ge2$ and as $\mathbf 
e_1 \notin \Gamma$ due to the restriction $1\le a\le n-1$, we see that 
$\Gamma$ has large $\mathbf{e}_1$-step greater then or equal to 2.

We estimate $2N$ by means of \eqref{eq2-lem9-3}--\eqref{eq3-lem9-3}, and 
\eqref{eq7-lem9-3}:
\begin{multline*}
2N=\sum_{k=1}^42N_k+\sum_{k=1}^42M_k
\\
\le
n + 2 -b-\Ceil{\frac{\p-n}{2}}
+ \p
-\Ceil{\frac{\v-n}{2}}+s_2+\pn+2M_2
\\
-(\np-\nl)
-(\vp-\vl)-(\lv-\ln)+2M_1+2M_3+2M_4
.
\end{multline*}
Dropping the ceilings, using~\eqref{eq4-lem9-2} and 
\eqref{eq8-lem9-3}--\eqref{eq11-lem9-3} and subsequently estimating $M_k \le 
1$, we obtain
\begin{equation}
\label{eq12-lem9-3}
2N \le 3n + \frac92 - 4b
+ \left( -\frac{\v}{2} + s_2 + \pn + 2M_2 \right)
.
\end{equation}

Let us estimate the term in parentheses on the right-hand side.  From~\eqref{eq4-lem9-3} and~\eqref{eq9-lem9-3} we get
\begin{equation*}
\v\ge\pv+\frac{s_2^2+s_2}{2},
\quad
\pn\le\pv-(1+b)M_2\le\pv-M_2
,
\end{equation*}
whence
\begin{equation}
\label{eq12.1-lem9-3}
-\frac{\v}{2} + s_2 + \pn + 2M_2
\le
\frac{\pv}{2} - \frac{s_2^2 - 3s_2}{4} + M_2
.
\end{equation}

It follows from assertion~(iv) of Lemma~\ref{lem9-2} that the vertex 
$(\pv,\p)$ of~$\m$ lies in the half-plane $x_1 \ge n$, so using assertion~(i) 
and~\eqref{eq4-lem9-2}, we get $\pv \le \p - 1 \le 2n - 2$.  Moreover, $M_2 
\le 1$ and $s_2^2 - 3s_2 \ge -2$, since $s_2$ is an integer, so 
from~\eqref{eq12.1-lem9-3} we obtain
\begin{equation*}
-\frac{\v}{2} + s_2 + \pn + 2M_2
\le
n + \frac 12
.
\end{equation*}
Combining this with~\eqref{eq12-lem9-3}, we get
\begin{equation*}
2N \le 4n + 5 - 4b.
\end{equation*}
Dividing both sides by 2 and taking floor, we obtain~\eqref{eq1-lem9-3}.
\end{proof}

\begin{lemma}
\label{lem9-4}
Suppose that $\m$ is a type~IV$_n$ $N$-gon, the segment $[(0,-n),(n,0)]$ 
splits $\m$, the vertices of $\m$ belong to a $(1, m)$-lattice $\Gamma$, where 
$m$ divides $n$, and $\Gamma$ has small $\mathbf{e}_1$-step and large 
$\mathbf{e}_2$-step greater than or equal to $2$.  Then
$$N\le 2n-2.$$
\end{lemma}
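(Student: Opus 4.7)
The plan is to mimic the strategy of Lemma~\ref{lem9-3}, replacing each ingredient by its sharpening under the stronger lattice hypotheses, and then to close the remaining gap using the divisibility $s_1\mid n$ inherited from $m\mid n$.

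\textbf{Step 1.} By Lemma~\ref{lem9-2}, the slopes $Q_1$, $Q_3$, $Q_4$ are split by the frames $((n,0);\mathbf{e}_2,-\mathbf{e}_1)$, $((n,n);\mathbf{e}_1,-\mathbf{e}_2)$, and $(\mathbf{0};\mathbf{e}_1,\mathbf{e}_2)$ respectively. Since $\Gamma$ is a proper sublattice of $\zz$, Theorem~\ref{th3-8} supplies
\[
2N_1\le \pn-\np+n-1,\qquad 2N_3\le \vl-\lv-1,\qquad 2N_4\le \ln+\nl-1.
\]
For $Q_2$, viewed as a slope with respect to $(-\mathbf{e}_1,-\mathbf{e}_2)$, the hypothesis that the small $\mathbf{e}_1$-step of $\Gamma$ is $\ge 2$ lets me invoke~\eqref{eq:slp4} to get $2N_2\le \p-\vp$. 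Finally, since the small $\mathbf{e}_1$-step divides the large one, the large $\mathbf{e}_1$-step is $\ge 2$; with the large $\mathbf{e}_2$-step $\ge 2$ by hypothesis, Proposition~\ref{pr:th5-1} gives $\np-\nl\ge 2M_1$, $\pv-\pn\ge 2M_2$, $\vp-\vl\ge 2M_3$, $\lv-\ln\ge 2M_4$.

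\textbf{Step 2.} Summing the four slope bounds via Proposition~\ref{pr:boundary}, bringing in the $M_k$ estimates, and carrying out the telescoping cancellation of the extremal coordinates yields
\[
2N\le n-3+\p+\pv.
\]
Thus it remains to prove $\p+\pv\le 3n-1$.

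\textbf{Step 3 (the hard step).} From $s_1\mid m\mid n$ and $s_1\ge 2$ one deduces $s_1\mid n$ and $s_1\mid 2n$; combined with $\p\le 2n-1$ and $s_1\mid\p$ this gives $\p\le 2n-s_1\le 2n-2$. The slab inequality \eqref{eq1-lem9-2} then forces $\pv<\p$, and the $\Gamma$-admissibility of the vertex $(\p,\pv)$ imposes an additional congruence restricting $\pv$. To squeeze out the last units, I would, when $\p$ is large, replace the Theorem~\ref{th3-8} bound on $N_1$ by the sharper small-angle estimate of Corollary~\ref{cor3-6} (valid by Lemma~\ref{lem9-2}(ii)), which removes $\lceil(\p-n)/2\rceil-1$ units from $2N_1$, and similarly refine $N_3$ via the small-angle bound exploiting Lemma~\ref{lem9-2}(iii) when $\v$ is large; this produces a bound of the shape
\[
2N\le n+1+\p+\pv-\Ceil{\frac{\p-n}{2}}-\Ceil{\frac{\v-n}{2}}.
\]
Combined with $\p\le 2n-s_1$ and the analogous divisibility bound on $\v$ (when $s_2\ge 2$, otherwise using $\v-\pv\ge s_2(s_2+1)/2$ from the $Q_2$-version of Proposition~\ref{pr:slp} with a free slope parameter), one arrives at $2N\le 4n-4$, hence $N\le 2n-2$.

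\textbf{Main obstacle.} The delicate point is Step~3: unlike in Lemma~\ref{lem8-5}, where after summation the proper-sublattice bound immediately yields the target, the diagonal extension of a type~IV polygon into the slab $n\le x_1\le 2n-1$ inflates the right-hand side by the term $\p+\pv$, which can be as large as $4n-5$ on purely divisibility grounds. Closing the resulting gap demands using simultaneously the small-angle strengthenings of Corollary~\ref{cor3-6} on the framed slopes together with the lattice-driven bound $\p\le 2n-s_1$; a case split on whether $s_2\ge 2$ (restricting $\v$ directly) or $s_2=1$ (invoking the slope parameter from Proposition~\ref{pr:slp} on $Q_2$ to control $\v-\pv$) is likely unavoidable.
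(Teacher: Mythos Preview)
Your Step~2 summation is correct, and your divisibility argument for $\p\le 2n-2$ is essentially the paper's argument. The gap is in Step~1: you apply \eqref{eq:slp4} only to $Q_2$, and use the framed bound of Theorem~\ref{th3-8} on $Q_1$. But $Q_1$ is equally a slope with respect to $(-\mathbf e_1,\mathbf e_2)$, and the \emph{same} hypothesis (small $\mathbf e_1$-step $\ge 2$) lets you invoke \eqref{eq:slp4} there too, yielding $2N_1\le \p-\np$. With this single change the telescoping in Step~2 gives
\[
2N\le 2\p + \bigl(2M_1-(\np-\nl)\bigr)+\bigl(2M_3-(\vp-\vl)\bigr)+\bigl(2M_4-(\lv-\ln)\bigr)+2(M_2-1)\le 2\p\le 4n-4,
\]
and you are done in one line; no Step~3 is needed at all.

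By contrast, your route leaves the term $\p+\pv$ in the bound, and $\pv$ can be as large as $\p-1\le 2n-3$, so you are short by roughly $n$ units. Your proposed Step~3 repair does not close this: even granting the small-angle refinements on $Q_1$ and $Q_3$ and the bound $\p\le 2n-2$, the estimate $\pv-\lceil(\v-n)/2\rceil$ only drops to about $(\pv+n)/2$, which still leaves $2N\le 4n-2$ in the worst case. The suggested case split on the small $\mathbf e_2$-step $s_2$ is also problematic: nothing in the hypotheses forces $s_2\ge 2$ (only the \emph{large} $\mathbf e_2$-step is assumed $\ge 2$), and when $s_2=1$ the slope parameter from Proposition~\ref{pr:slp} on $Q_2$ gives no useful control on $\v-\pv$ beyond what you already used. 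The missing idea is precisely to exploit the small $\mathbf e_1$-step hypothesis on $Q_1$ as well as on $Q_2$.
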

\begin{proof}
Applying Proposition~\ref{pr:slp} to the basis $(-\mathbf{e}_1,\mathbf{e}_2)$ 
and the slope $Q_1$ and to the basis $(-\mathbf{e}_1,-\mathbf{e}_2)$ and the 
slope $Q_2$, we obtain
\begin{gather}
2N_1\le \p-\np,
\label{eq:t4-lem2-1}
\\
2N_2\le \p-\vp.
\label{eq:t4-lem2-2}
\end{gather}

Applying Theorem~\ref{th3-8} to the frame $((n,n),\mathbf{e}_1,-\mathbf{e}_2)$ 
and the slope $Q_3$ and to the frame $(\mathbf{0},\mathbf{e}_1,-\mathbf{e}_2)$
and the slope $Q_4$, we obtain
\begin{gather}
2N_3\le \vl-\lv-1,
\label{eq:t4-lem2-3}
\\
2N_4\le \nl+\ln-1.
\label{eq:t4-lem2-4}
\end{gather}

By Proposition~\ref{pr:th5-1},
\begin{gather}
\np-\nl\ge2M_1,
\label{eq:t4-lem2-5}
\\
\pv-\pn\ge2M_2,
\label{eq:t4-lem2-6}
\\
\vp-\vl\ge2M_3,
\label{eq:t4-lem2-7}
\\
\lv-\ln\ge2M_4.
\label{eq:t4-lem2-8}
\end{gather}

Observe that $\nz$ is a sublattice of $\Gamma$.  Indeed, $\nz$ is a sublattice 
of $\z \times m\z$, since $m$ divides $n$, and the unimodular transformation 
mapping $\z \times m\z$ onto $\Gamma$ maps $\nz$ onto itself.  Thus, the point 
$(2n,0)$ belongs to~$\Gamma$.  So does the vertex $(\p,\pn)$.  Therefore, the 
small $\mathbf{e}_1$-step of $\Gamma$ divides the difference $2n-\p$, which is 
positive by~\eqref{eq4-lem9-2}.  Consequently, we obtain
\begin{equation}
\label{eq:t4-lem2-9}
\p\le 2n-2
.
\end{equation}

Now we use the above inequalities to estimate $N$.  
Summing~\eqref{eq:t4-lem2-1}--\eqref{eq:t4-lem2-4} and subsequently 
using~\eqref{eq:t4-lem2-5}--\eqref{eq:t4-lem2-8} and \eqref{eq:t4-lem2-9}, we 
obtain
\begin{multline*}
2N=\sum_{k=1}^42N_k+\sum_{k=1}^42M_k
\le
2\p
+(2M_1-(\np-\nl))
\\
+(2M_3-(\vp-\vl))
+(2M_4-(\lv-\ln))+2(M_2-1)
\\
\le 2\p\le 2(2n-2)
,
\end{multline*}
and the lemma follows.
\end{proof}

\begin{lemma}
\label{th3-9}
Let $n\ge 3$ be an integer and $Q$ be a slope with $N$ edges with respect to 
the basis $(-\mathbf{e}_1,\mathbf{e}_2)$.  Suppose that the vertices of $Q$ 
belong to a lattice~$\Gamma$ spanned by $\mathbf{e}_1+a\mathbf{e}_2$ and 
$n\mathbf{e}_2$, where $a$ is an integer.  Suppose that the frame 
$((n,0);-\mathbf{e}_1,\mathbf{e}_2)$ splits $Q$, the endpoints $\mathbf v = 
(v_1, v_2)$ and $\mathbf w = (w_1, w_2)$ of $Q$ satisfy
\begin{equation}
\label{eqA-th3-9}
v_1 < n, \ v_2 < 0,
\ n < w_1 \le 2n - 1, \ w_2 > 0
,
\end{equation}
the intersection of $Q$ with the half-plane $x_1 \ge n$ lies in the slab
\begin{equation}
\label{eqD-th3-9}
-n< x_2 - x_1 < 0,
\end{equation}
and $Q$ has a vertex $\mathbf u = (u_1, u_2)$ satisfying
\begin{equation}
\label{eqB-th3-9}
u_2 - u_1 \le - n - 1
.
\end{equation}
Then
\begin{equation}
\label{eqC-th3-9}
2N\le 2n-1- v_1
.
\end{equation}
\end{lemma}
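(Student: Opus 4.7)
My plan is to apply Theorem~\ref{th3-6}'s small-angle inequality \eqref{eq:3-6E} to the slope $Q$ with the frame $((n,0);\mathbf{e}_2,-\mathbf{e}_1)$, obtained from the frame in the hypothesis by swapping its two basis vectors.

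First I would verify that this swapped frame splits $Q$ and forms small angle with it. Splitting is preserved under swapping the two basis vectors of a frame, so the swapped frame splits $Q$. To get the small-angle property, I would invoke Proposition~\ref{pr:sa} with $\mathbf{y}=\mathbf{u}$. For this I need to check $u_1\le n-1$: if instead $u_1\ge n$, then $\mathbf{u}$ would lie in the half-plane $x_1\ge n$, and the slab condition \eqref{eqD-th3-9} would force $u_2>u_1-n\ge 0$, contradicting $u_2-u_1\le-n-1$. Consequently, in the swapped frame $\mathbf{u}$ has second coordinate $n-u_1\ge 1>0$ and coordinate sum $u_2+(n-u_1)=n-(u_1-u_2)\le -1\le 0$, so Proposition~\ref{pr:sa} delivers the small-angle condition.

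Applying \eqref{eq:3-6E} then yields integers $s,t$ with $0\le s\le t$, with $s\le n-v_1$, with the companion inequality $(n-v_1)(t+1)>-v_2+s(s+1)/2$ (obtained by simplifying \eqref{eq:3-6C}), and with
\begin{equation*}
2N\le (n-v_1)+w_2-(t-s)-\Ceil{(w_1-n)/2}+1.
\end{equation*}
Substituting the slab bound $w_2\le w_1-1$ and using $w_1-1-\lceil(w_1-n)/2\rceil+1=n+\lfloor(w_1-n)/2\rfloor$, the desired conclusion $2N\le 2n-1-v_1$ reduces to the lower bound
\begin{equation*}
t-s\ge\Floor{(w_1-n)/2}+1.
\end{equation*}

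The hard part is establishing this lower bound on $t-s$. To do so, I would extract a sufficiently strong lower estimate on $-v_2$ by applying Proposition~\ref{pr:slp}'s refined inequality \eqref{eq:slp5} to the sub-slope from $\mathbf{u}$ (or equivalently from the first vertex of $Q$ at which $x_1<n$) down to $\mathbf{v}$. Inequality \eqref{eq:slp5} is available because $\Gamma$ admits a basis of the form $(-\mathbf{e}_1-a'\mathbf{e}_2,n\mathbf{e}_2)$ with $1\le a'\le n$, obtained by negating $\mathbf{e}_1+a\mathbf{e}_2$ and reducing $-a$ modulo $n$ against the generator $n\mathbf{e}_2$. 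Plugging the resulting lower bound on $-v_2$ into the companion inequality from Theorem~\ref{th3-6} forces $t+1$ above the required threshold, and a matching upper bound on $s$ comes from $s\le n-v_1$ together with the geometry of where the slope crosses the line $x_1=n$. The principal technical burden will be matching the arithmetic uniformly across the ranges of $w_1,w_2,v_1,v_2,a,u_1,u_2$, and handling cleanly the edge cases $\mathbf{u}=\mathbf{v}$ and $w_1-n\in\{1,2\}$, where the ceiling savings vanish and one must squeeze the bound by one unit.
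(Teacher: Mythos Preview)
Your setup is correct: swapping to the frame $((n,0);\mathbf e_2,-\mathbf e_1)$ preserves splitting, and your use of Proposition~\ref{pr:sa} with $\mathbf y=\mathbf u$ to obtain the small-angle property is valid (the check $u_1\le n-1$ via the slab condition is right). Applying \eqref{eq:3-6E} and reducing the problem to
\[
t-s\ge\Floor{\tfrac{w_1-n}{2}}+1
\]
is also correct arithmetic. The gap is precisely this last inequality, and your plan for it does not close.

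The quantities $s,t$ of Theorem~\ref{th3-6} are governed by the companion inequality \eqref{eq:3-6C}, which in your coordinates reads $(n-v_1)(t+1)>-v_2+s(s+1)/2$. All the information here is about the $\mathbf v$-end of the slope; the target lower bound on $t-s$ depends on $w_1$, the $\mathbf w$-end. Your proposed bridge --- a lower bound on $-v_2$ from Proposition~\ref{pr:slp}\,\eqref{eq:slp5} applied to the sub-slope near~$\mathbf v$, combined with $s\le n-v_1$ --- produces estimates in terms of $v_1,v_2,a,n$, not $w_1$. There is no mechanism in your outline by which $w_1$ enters the bound on $t-s$. Concretely, if one assumes for contradiction that $2N\ge 2n-v_1$, your small-angle inequality only yields the \emph{upper} bound $t-s\le\lfloor(w_1-n)/2\rfloor$, which does not conflict with $t\ge s$; one still needs an independent \emph{lower} bound on $t-s$ of the same order, and none of your listed ingredients supplies it.

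The paper takes a completely different, direct route: it sorts the edges of $Q$ by their $x_1$-increment $j$, sets $N^{(j)}=|\{i:a_{i1}=j\}|$, and uses the identities $N=\sum N^{(j)}$, $w_1-v_1=\sum jN^{(j)}$. The lattice hypothesis on $\Gamma$ is used pointwise: an edge with $a_{i1}=1$ must have $\mathbf a_i=(1,a+pn)$, and the slab \eqref{eqD-th3-9} forces $a_{i2}<n$, leaving at most one such edge, so $N^{(1)}\le 1$. Assuming $2N\ge 2n-v_1$ then forces $w_1=2n-1$, $N^{(1)}=1$, $N^{(j)}=0$ for $j\ge3$; a parallel argument bounds $N^{(2)}$, and the vertex $\mathbf u$ is invoked only to finish the residual case $n=3$. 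This fine control over individual edge vectors in $\Gamma$ is what does the work, and your coarser use of the lattice through \eqref{eq:slp5} does not substitute for it.
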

\begin{proof}
Let $\mathbf v_0 = \mathbf v$, $\mathbf v_1$, \dots, $\mathbf v_N = \mathbf w$ 
be consecutive vertices of $Q$, $\eps_i = [\mathbf v_{i-1}, \mathbf v_i]$ and 
$\mathbf a_i = \mathbf v_i - \mathbf v_{i-1}$ be the edges and their 
associated vectors, where $i = 1, \dots, N$, and let $\mathbf v_i = (v_{i1}, 
v_{i2})$, $\mathbf a_i = (a_{i1}, a_{i2})$.  Of course, $v_{ij}$ and $a_{ij}$ 
are integers.  It follows from the definition of a slope that
\begin{gather}
a_{i1}\ge 1\qquad (i=1,\ldots,N);
\label{eq1-th3-9}
\\
a_{i2}\ge 1\qquad (i=1,\ldots,N);
\label{eq2-th3-9}
\\
\frac{a_{12}}{a_{11}}<\frac{a_{22}}{a_{21}}<\dots<\frac{a_{N2}}{a_{N1}};
\label{eq3-th3-9}
\\
v_{01}<v_{11}<\dots<v_{N1}.
\label{eq4-th3-9}
\end{gather}

Set
$$
E^{(j)}=\{\eps_i \colon a_{i1}=j\}, \ N^{(j)}=|E^{(j)}|\qquad(j=1,2,\dots).
$$
Of course, $E^{(j)}\neq\varnothing$ and $N^{(j)}\ne 0$ for finitely many~$j$.  
As the vectors $\mathbf a_i$ are distinct, we have
\begin{gather}
N=\sum_{j=1}^{\infty}N^{(j)}
,
\label{eq5-th3-9}
\\
w_1 - v_1 = \sum_{i = 1}^N a_{i1} = \sum_{j=1}^{\infty}jN^{(j)}
\label{eq6-th3-9}
.
\end{gather}

We claim that
\begin{equation}
\label{eq7-th3-9}
N^{(1)}\le1.
\end{equation}
It follows from~\eqref{eqA-th3-9} that $v_{N1}\ge n+1$ and taking into 
account~\eqref{eq1-th3-9}, we also obtain $v_{N-1,1}=v_{N1}-a_{N1}\le
v_N-1$.  Therefore, there exists a point $\mathbf y = (y_1, y_2) \in \eps_N$ 
such that $y_1=v_{N1}-1\ge n$.  Then
$$
\frac{a_{N2}}{a_{N1}}
=\frac{v_{N2}-y_2}{v_{N1}-y_1}=v_{N2}-y_2
.
$$
As the points $\mathbf v_N$ and $\mathbf y$ lie in the half-plane $x_1 \ge n$, 
they satisfy
$$-n< y_2 - y_1<0,\qquad
-n< v_{N2} - v_{N1} < 0,$$
whence $v_{N2} \le v_{N1} - 1$ and
$$\frac{a_{N2}}{a_{N1}}<(v_{N1} - 1 )-(y_1 
-n)
=n+(v_{N1} - y_1)-1=n.$$
Thus, according to~\eqref{eq3-th3-9},
\begin{equation}
\label{eq8-th3-9}
a_{i2}=\frac{a_{i2}}{a_{i1}}<n\qquad (\eps_i\in E^{(1)})
.
\end{equation}
As $\mathbf a_i \in \Gamma$, it is easily seen that possible values for  
$\mathbf{a}_i$ corresponding to $\eps_i\in E^{(1)}$ belong to the set 
$\{\mathbf{e}_1+(a+pn)\mathbf{e}_2\colon p\in\z\}$.  Only one vector of this 
set satisfies both~\eqref{eq2-th3-9} and~\eqref{eq8-th3-9}.  As the vectors 
$\mathbf a_i$ are distinct, we conclude that $E^{(1)}$ contains at most one 
edge, and~\eqref{eq1-th3-9} follows.

Having established all these auxiliary facts, we start 
proving~\eqref{eqC-th3-9}.  Assuming the converse, we have
$$2n-v_1-2N\le0,$$
or, equivalently,
$$(2n-1-w_1)+(1-N^{(1)})+\sum_{j=3}^{\infty}(j-2)N^{(j)}\le0,$$
where we have used~\eqref{eq5-th3-9} and~\eqref{eq6-th3-9}.  In view 
of~\eqref{eqA-th3-9} and \eqref{eq8-th3-9}, the three summands on the 
left-hand side are nonnegative.  Consequently, we obtain
\begin{gather}
w_1 =2n-1,
\label{eq9-th3-9}
\\
N^{(1)}=1,
\label{eq10-th3-9}
\\
N^{(j)}=0,\qquad(j=3,4,\ldots)
\label{eq11-th3-9}
\end{gather}
We must have $E^{(2)} \ne \varnothing$, for otherwise
$N^{(2)}=0$ and \eqref{eq10-th3-9}, \eqref{eq11-th3-9}, and \eqref{eq6-th3-9} 
would give $ w_1 - v_1 = 1 $, which together with \eqref{eq9-th3-9} implies
$$v_1 = w_1-1=2n-2>n,$$
in contradiction to~\eqref{eqA-th3-9}.  Set
$$i'=\max\{i\colon \eps_i\in E^{(2)}\}.$$
Let us show that if
\begin{equation}
\label{eq11,5-th3-9}
v_{i'-1,1}\ge n,
\end{equation}
then
\begin{equation}\label{eq12-th3-9}
a_{i2}\le n \qquad(\eps_i\in
E^{(2)}).
\end{equation}
Indeed, if~\eqref{eq11,5-th3-9} holds, according to~\eqref{eq4-th3-9} we have
$n\le v_{i'-1,1}<v_{i'1}$, so the vertices $\mathbf v_{i'-1}$ and $\mathbf 
v_{i'}$ belong to the slab~\eqref{eqD-th3-9}, whence
\begin{gather*}
-n+1\le
v_{i'-1,2}
-
v_{i'-1,1}
\le -1,
\\
-n+1\le
v_{i'2}
-
v_{i'1}
\le -1,
\end{gather*}
Then
\begin{gather*}
\frac{a_{i'2}}{a_{i'1}}=\frac{v_{i'2}-v_{i'-1,2}}{2}
\le\frac{(v_{i'1}-1)-(v_{i'-1,1}-n+1)}{2}
=\frac{a_{i'1}-2+n}{2}=\frac n2.
\end{gather*}
This and~\eqref{eq3-th3-9} imply \eqref{eq12-th3-9}.

Assume that $n\ge 4$.  Let us show that in this case
\begin{equation}
\label{eq13-th3-9}
N^{(2)}=1.
\end{equation}
To this end let us estimate $v_{i'-1,1}$.  If $\eps_N\in E^{(2)}$, we have $i' 
= N$ and by virtue of \eqref{eq9-th3-9} we get
$$v_{i'-1,1}=v_{N1}-a_{N1}=2n-3\ge n.$$
Otherwise, $\eps_N\in E^{(1)}$, then $\eps_{N-1}\in E^{(2)}$, since 
$|E^{(1)}|=1$.  Moreover, $i'=N-1$ and
\begin{gather*}
v_{N-1,1}=v_{N1}-a_{N1}=2n-2,
\\
v_{i'-1,1}=v_{N-1,1}-a_{N-1,1}=2n-4\ge n
.
\end{gather*}
Thus, in any case we have~\eqref{eq11,5-th3-9}, so~\eqref{eq12-th3-9} holds.

As the vectors $\mathbf a_i$ associated with edges from $E^{(2)}$ belong to 
$\Gamma$, it is easily seen that they have the form $\mathbf a_i = (2, 
2a+pn)$, where $p$ is an integer.  Clearly, only one vector of this form 
satisfies~$1 \le a_{i2} \le n$ and as the vectors $\mathbf a_i$ are distinct, 
we see that $E^{(2)}$ contains at most one edge.  Thus,~\eqref{eq13-th3-9} is 
proved.

From \eqref{eq6-th3-9}, \eqref{eq10-th3-9}, \eqref{eq11-th3-9}, and 
\eqref{eq13-th3-9} it follows that
$$w_1 - v_1=3,$$
and according to~\eqref{eq9-th3-9},
$$v_1 = w_1-3=2n-4\ge n,$$
which contradicts \eqref{eqA-th3-9}.  The contradiction proves the lemma for 
$n\ge4$.

Now assume that $n=3$.  Let us prove the following assertions:
\begin{enumerate}[(a)]
\em
\item
The numbers $a_{i2}/a_{i1}$ $(i=1,\ldots,N)$ are positive integers or 
half-integers not exceeding $2$.
\item
The numbers $a_{i2}/a_{i1}$ $(i=1,\ldots,N-1)$ are positive integers or 
half-integers not exceeding $3/2$.
\end{enumerate}

Let us show (a).  Fix $i\in \{1,\ldots,N\}$.  If $\eps_i\in E^{(1)}$, then
$a_{i2}/a_{i1} = a_{i2}$ is a positive integer (according 
to~\eqref{eq1-th3-9}) not exceeding $2$ according to~\eqref{eq8-th3-9}.  
Assume that $\eps_i\in E^{(2)}$, then $a_{i2}/a_{i1} = a_{i2}/2$ is positive 
and either an integer or a half-integer.  Let us show that it cannot be 
greater then $2$.  If $\eps_N\in E^{(1)}$, by the above we have
$a_{N2}/a_{N1}\le 2$ and the required estimate follows from~\eqref{eq3-th3-9}.  
On the other hand, if $\eps_N\in E^{(2)}$, then $i'=N$ and by virtue 
of~\eqref{eq9-th3-9} we have
$$v_{i'-1,1}=v_{N1}-2=3,$$
i.~e.~\eqref{eq11,5-th3-9} holds, and the required estimate follows 
from~\eqref{eq12-th3-9}.  Assertion~(a) is proved.

Assertion (b) is a corollary of (a), given that by virtue 
of~\eqref{eq3-th3-9}, the ratio $a_{i2}/a_{i1}$ cannot attain its maximum at 
$i < N$.

By hypothesis, the slope~$Q$ has a vertex $(u_1, u_2)$ satisfying $u_2 < u_1 - 
n = u_1 - 3$.  Set
$$i_0=\max\{i\colon v_{i2} < v_{i1} - 3 \}.$$
Observe that \emph{a priori} $i_0 < N$.  Then we have $v_{i_0 + 1, 2} \ge 
v_{i_0 +1, 1} - 3$ and therefore,
\begin{multline*}
\frac{a_{i_0 + 1, 2}}{a_{i_0 + 1, 1}}
=\frac{v_{i_0+1, 2} - v_{i_0, 2}}{a_{i_0+1,1}}
>\frac{(v_{i_0+1, 1} - 3) - (v_{i_0, 1} - 3)}{a_{i_0+1,1}}
\\
=\frac{v_{i_0+1, 1} - v_{i_0, 1}}{a_{i_0+1,1}}
= 1
.
\end{multline*}
We must have $v_{i_0} \le 2$, because otherwise we would have $v_{i_0} \ge 3 = 
n$, and by hypothesis, $v_{i_02} < v_{i_01} - 3$, contrary to the definition 
of $i_0$.  Thus,
\begin{equation*}
v_{i_0+1, 1} = v_{i_01} + a_{i_0+1,1} \le 4
.
\end{equation*}
This and~\eqref{eq9-th3-9} imply $i_0+1<N$.  Then assertion~(b) and the 
inequality $a_{i_0+1,2}/a_{i_0+1,1} > 1$ proved above yield 
$a_{i_0+1,2}/a_{i_0+1,1} = 3/2$, which is only possible if
$\mathbf{a}_{i_0+1}=(2,3)$.

Let $\eps_{i_1}\in E^{(1)}$, then by assertion~(a), we have either 
$\mathbf{a}_{i1}=(1,1)$ or $\mathbf{a}_{i1}=(1,2)$.  By 
Proposition~\ref{pr:th1-1}, in both cases the vectors $\mathbf{a}_{i_1}$ and 
$\mathbf{a}_{i_0+1}$ form a basis of $\zz$, which is impossible, since they 
belong to its proper sublattice~$\Gamma$.  The contradiction proves the lemma 
in the case $n=3$.
\end{proof}

\begin{lemma}
\label{lem9-5}
Suppose that $\m$ is a type~IV$_n$ $N$-gon, the segment $[(0,-n),(n,0)]$ 
splits $\m$, and the vertices of $\m$ belong to a lattice $\Gamma \subset \zz$
having the basis $(\mathbf{e}_1+a\mathbf{e}_2$, $n\mathbf{e}_2)$, где
$1\le a\le n-1$.  Then
\begin{equation}
\label{eq1-lem9-5}
N\le 2n-2.
\end{equation}
\end{lemma}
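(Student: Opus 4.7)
The plan is to follow the template of Lemmas~\ref{lem9-3} and~\ref{lem9-4}: extract geometric information from Lemma~\ref{lem9-2}, apply the strongest available slope estimate to each $Q_k$, sum the four bounds, and use Proposition~\ref{pr:th5-1} together with $M_k$ cancellations to obtain a single usable inequality.

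First I would collect four slope estimates. For $Q_1$, all hypotheses of Lemma~\ref{th3-9} are furnished by Lemma~\ref{lem9-2}: parts~(ii), (i), (vi) yield that the frame $((n,0);-\mathbf e_1,\mathbf e_2)$ splits $Q_1$, that the endpoints lie in the required sign regions with $\p\le 2n-1$, and that the intersection with $x_1\ge n$ sits in $-n<x_2-x_1<0$, while part~(v) supplies the vertex with $u_2-u_1\le -n-1$. This produces $2N_1\le 2n-1-\np$. For $Q_3$ and $Q_4$, since $\Gamma$ is a proper sublattice of~$\zz$, Theorem~\ref{th3-8} applied to the frames in parts~(iii), (iv) gives $2N_3\le\vl-\lv-1$ and $2N_4\le\nl+\ln-1$. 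For $Q_2$, I would apply Proposition~\ref{pr:slp}'s refined form~\eqref{eq:slp5} using the basis $(-\mathbf e_1+(n-a)\mathbf e_2,-n\mathbf e_2)$ of~$\Gamma$ (valid via Proposition~\ref{pr:th1-1} since $1\le n-a\le n-1$), obtaining an integer $s_2$ with $0\le s_2\le N_2$, $2N_2\le\p-\vp+s_2$, and $\v-\pv\ge\frac{2(n-a)+(s_2-1)n}{2}s_2$.

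Next, Proposition~\ref{pr:th5-1} supplies $\np-\nl\ge 2M_1$ and $\vp-\vl\ge 2M_3$ (the large $\mathbf e_1$-step of~$\Gamma$ is at least~$2$, since $\mathbf e_1\notin\Gamma$) together with $\lv-\ln\ge nM_4$. The decisive observation is that $M_2=0$: Lemma~\ref{lem9-2}(i) forces $\pn,\pv\in(\p-n,\p)$, whence $\pv-\pn\le n-2$, which contradicts $\pv-\pn\ge n$ if $M_2=1$. Summing the four slope bounds and adding $2\sum M_k$, the differences cancel (using $n\ge 3$ to absorb the term $(2-n)M_4\le 0$) and leave $2N\le 2n-3+\p+s_2$.

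Assuming for contradiction that $N\ge 2n-1$, this forces $\p+s_2\ge 2n+1$. For $s_2\in\{0,1\}$ this contradicts $\p\le 2n-1$. For $s_2=2$, we must have $\p=2n-1$, so Lemma~\ref{lem9-2}(i) gives $\pv\ge n$, while $\v\le\vp-1\le 2n-2$ yields $\v-\pv\le n-2$; combined with $\v-\pv\ge 3n-2a$, this forces $a\ge n+1$, contradicting $a\le n-1$. For $s_2\ge 3$, the lower bound on $\v-\pv$ (at least $6n-3a\ge 3n+3$ when $s_2=3$, and even larger for bigger~$s_2$) exceeds the trivial upper bound $\v-\pv\le 2n-4$ for every $n\ge 3$. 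All cases yield a contradiction, and therefore $N\le 2n-2$. The main obstacle is finding the right combination of tools: Proposition~\ref{pr:slp}'s strong form~\eqref{eq:slp4} is unavailable since the small $\mathbf e_1$-step of~$\Gamma$ equals~$1$, which is exactly why Lemma~\ref{th3-9} was formulated, and its application to $Q_1$ eliminates the bulk of the slack; the remainder is closed by the sublattice strengthening in Theorem~\ref{th3-8} for $Q_3,Q_4$, by observing $M_2=0$, and by the refined $|b_2|$-bound~\eqref{eq:slp5} for $Q_2$.
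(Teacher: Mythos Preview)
Your proof is correct and follows essentially the same approach as the paper: the same application of Lemma~\ref{th3-9} to $Q_1$, Theorem~\ref{th3-8} to $Q_3$ and $Q_4$, the refined Proposition~\ref{pr:slp} (via \eqref{eq:slp5}) to $Q_2$, and the observation $M_2=0$. The only difference is organizational: the paper first shows unconditionally that $\v-\pv\le n-2$ (from $\v\le\vp-1$, $\pv\ge\p-n+1$, $\vp\le\p$), which together with $\v-\pv\ge\frac{2+(s_2-1)n}{2}s_2$ forces $s_2\le1$ outright; plugging this into the sum gives $2N\le 4n-3$ directly, avoiding your case split on $s_2$.
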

\begin{proof}
Lemma~\ref{lem9-2} ensures that we can apply Lemma~\ref{th3-9} to the 
slope~$Q_1$ and obtain
\begin{equation}
\label{eq3-lem9-5}
2N_1\le2n-1-\np.
\end{equation}

Set $\mathbf{f}_1=-\mathbf{e}_1$, $\mathbf{f}_2=-\mathbf{e}_2$, so that $Q_2$ 
is a slope with respect to $(\mathbf f_1, \mathbf f_2)$.  By hypothesis, the 
vectors $\mathbf{b}_1=-\mathbf{f}_1-a\mathbf{f}_2$,
$\mathbf{b}_2=-n\mathbf{f}_2$ form a basis of~$\Gamma$.  By 
Proposition~\ref{pr:th1-1}, the vectors 
$\mathbf{f}_1-(n-a)\mathbf{f}_2=-\mathbf{b}_1+\mathbf{b}_2$ and 
$n\mathbf{f}_2=-\mathbf{b}_2$ form a basis of $\Gamma$ as well, and as $1\le 
n-a\le n-1$, we apply Proposition~\ref{pr:slp} and conclude that there exists 
an integer~$s_2$ such that
\begin{gather}
2N_2\le \p-\vp+s_2,
\label{eq4-lem9-5}
\\
\v-\pv\ge\frac{2(n-a)+(s_2-1)n}{2}s_2.
\label{eq4.1-lem9-5}
\end{gather}
As $n-a\ge1$, \eqref{eq4.1-lem9-5} implies
\begin{equation}
\label{eq6-lem9-5}
\v-\pv\ge\frac{2+(s_2-1)n}{2}s_2.
\end{equation}

Applying Theorem~\ref{th3-8} to the slope~$Q_3$ and the 
frame~$((n,n),\mathbf{e}_1,-\mathbf{e}_2)$ and to the slope $Q_4$ and the 
frame $(\mathbf{0};\mathbf{e}_2,\mathbf{e}_1)$, we obtain
\begin{gather}
2N_3\le\vl-\lv-1,
\label{eq7-lem9-5}
\\
2N_4\le\nl+\ln-1.
\label{eq8-lem9-5}
\end{gather}

By assertion~(vi) of Lemma~\ref{lem9-2}, the points $(\p,\pn)$, $(\p,\pv)$, 
and $(\vp,\v)$ lie in the half-plane $x_1\ge n$, so by assertion~(v) of the 
same lemma we have
\begin{gather}
-n+1\le\pn - \p \le-1,
\label{eq9-lem9-5}
\\
-n+1\le\pv - \p\le-1,
\label{eq10-lem9-5}
\\
-n+1\le\v - \vp \le-1.
\label{eq11-lem9-5}
\end{gather}
From~\eqref{eq4-lem9-2} we also have
\begin{equation}\label{eq11,5-lem9-5}
\p\le2n-1.
\end{equation}

It is clear that $\mathbf{e}_1 \notin \Gamma$, so the large 
$\mathbf{e}_1$-step of $\Gamma$ cannot be less then 2.  It is easily seen that 
the large $\mathbf{e}_2$-step of~$\Gamma$ equals~$n$.  By 
Proposition~\ref{pr:th5-1}, we get
\begin{gather}
\np-\nl\ge2M_1,
\label{eq12-lem9-5}
\\
\pv-\pn\ge nM_1,
\label{eq13-lem9-5}
\\
\vp-\vl\ge2M_3,
\label{eq14-lem9-5}
\\
\lv-\ln\ge nM_4.
\label{eq15-lem9-5}
\end{gather}

Now we deduce a few implications of the inequalities.

Inequalities~\eqref{eq9-lem9-5} and~\eqref{eq10-lem9-5} yield
$$\pv-\pn\le n-2.$$
This and~\eqref{eq13-lem9-5} give
$$nM_2\le n-2,$$
which can only hold if
\begin{equation}\label{eq16-lem9-5}
M_2=0.
\end{equation}

Let us estimate the difference $\v-\pv$ from above using~\eqref{eq10-lem9-5}, 
\eqref{eq11-lem9-5}, and the evident inequality $\p\ge\vp$.  We have:
$$\v-\pv\le(\vp-1)-(\p-n+1)=n-2-(\p-\vp)\le n-2.$$
Comparing this with~\eqref{eq6-lem9-5}, we obtain
$$\frac{2+n(s_2-1)}{2}s_2\le n-2,$$
which can only hold if
$$s_2\le1.$$
This and~\eqref{eq4-lem9-5} give
\begin{equation}
\label{eq17-lem9-5}
2N_2\le\p-\vp+1.
\end{equation}

Now we estimate $N$ by means of \eqref{eq3-lem9-5}, \eqref{eq7-lem9-5}, 
\eqref{eq8-lem9-5}, \eqref{eq11,5-lem9-5}, \eqref{eq12-lem9-5}, and 
\eqref{eq14-lem9-5}--\eqref{eq17-lem9-5}.  We have:
\begin{multline*}
2N=\sum_{k=1}^42N_k+\sum^4_{k=1}2M_k
\le
2n-2+\p
+(2M_1-(\np-\nl))
\\
+(2M_3-(\vp-\vl))+(nM_4-(\lv-\ln))-(n-2)M_4
\\
\le 4n-3.
\end{multline*}
Dividing by 2 and taking floor, we obtain~\eqref{eq1-lem9-5}.
\end{proof}

Now we are in position to prove Theorem~\ref{subtheorem-c} for type IV$_n$ 
polygons split by the segment $[(0,-n),(n,0)]$.

\begin{lemma}
\label{lem9-6}
Theorem~\ref{subtheorem-c} holds for type IV$_n$ polygons split by the segment 
$[(0,-n),(n,0)]$.
\end{lemma}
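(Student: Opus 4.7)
The plan is to carry out a case analysis on the lattice $\Gamma$ generated by the vertices of $\m$, invoking the three preceding lemmas for the main cases and handling one residual subcase by hand. Part~(i) of Theorem~\ref{subtheorem-c} is immediate from Lemma~\ref{lem9-3} applied with $\Gamma = \zz$ and $b = 0$.

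For parts~(ii) and~(iii) the dichotomy is the small $\mathbf{e}_1$-step of~$\Gamma$. If this step is at least~$2$, then Lemma~\ref{lem9-4} applies (its large $\mathbf{e}_2$-step hypothesis is automatic, since that step equals the second invariant factor, $n$ or $n/2$, both $\ge 2$ for $n \ge 3$) and yields $N \le 2n-2$, which is more than enough for either part. Otherwise the small $\mathbf{e}_1$-step equals~$1$ and, by Proposition~\ref{pr:th1-12}, $\Gamma$ admits a basis $(\mathbf{e}_1 + a\mathbf{e}_2,\; m\mathbf{e}_2)$ with $0 \le a \le m-1$, where $m \in \{n, n/2\}$. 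In part~(iii), with $m = n$, the subcase $a = 0$ is ruled out on divisibility grounds: it would give $\Gamma = \z \times n\z$, forcing $\v$ to be a positive multiple of~$n$ and contradicting $n < \v \le 2n-2$ from Lemma~\ref{lem9-2}(vi); the remaining subcase $1 \le a \le n-1$ is then closed by Lemma~\ref{lem9-5}. In part~(ii), with $m = n/2$, the subcase $1 \le a \le n/2-1$ is covered by Lemma~\ref{lem9-3} with $b = 1$.

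The residual case, and the main obstacle, is part~(ii) with $a = 0$, i.e.\ $\Gamma = \z \times (n/2)\z$. Here the divisibility argument from part~(iii) only pins $\v$ down to the value $3n/2$ rather than producing a contradiction, and none of Lemmas~\ref{lem9-3}--\ref{lem9-5} covers this specific lattice. My plan for this subcase is to re-run the slope-summation of Lemma~\ref{lem9-3} with sharper ingredients: apply Theorem~\ref{th3-8} to $Q_4$ (legitimate because $\Gamma$ is a proper sublattice of $\zz$), and upgrade the Proposition~\ref{pr:th5-1} bounds from $\pv - \pn \ge (1+b)M_2$ and $\lv - \ln \ge (1+b)M_4$ to the stronger $\pv - \pn \ge (n/2)M_2$ and $\lv - \ln \ge (n/2)M_4$, which hold because the large $\mathbf{e}_2$-step of $\Gamma$ is $n/2$. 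Combined with the pinning $\v = 3n/2$ and the slab constraint $\p - n < \pv < \p$ from Lemma~\ref{lem9-2}(i), these upgrades should tighten $2N \le 4n+5$ of Lemma~\ref{lem9-3} down to $2N \le 4n+1$, producing $N \le 2n$. Should a stubborn off-by-one gap remain, the fall-back is a direct Lemma~\ref{lem8-3}-style enumeration of admissible $\Gamma$-points in the two characteristic regions identified by Lemma~\ref{lem9-2}: the vertical slab $-n+1 \le x_1 \le n-1$ and the diagonal band $\{x_1 \ge n,\; -n < x_2 - x_1 < 0\}$.
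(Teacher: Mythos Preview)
Your overall structure mirrors the paper's, but there is a real error that produces a gap.

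The error is the claim that the large $\mathbf{e}_2$-step of a $(1,m)$-lattice ``equals the second invariant factor.'' This is false: $\Gamma = (n/2)\z \times \z$ is a $(1,n/2)$-lattice with large $\mathbf{e}_2$-step equal to $1$. Proposition~\ref{pr:th1-11} only says that the large $\mathbf{e}_2$-step times the small $\mathbf{e}_1$-step equals $\det\Gamma$; it says nothing about either factor individually.

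Consequently, in part~(ii) the subcase where the small $\mathbf{e}_1$-step is $n/2$ (hence the large $\mathbf{e}_2$-step is $1$) is not covered by Lemma~\ref{lem9-4}, and you have not addressed it. The paper handles it separately: the vertices then lie on the five lines $x_1 = (n/2)r$ for $r \in \{-2,-1,0,1,2\}$, giving $N \le 10 \le 2n$ for $n \ge 6$, while for $n = 4$ a short Helly argument (Lemma~\ref{Helly}) with the lines $x_2 = \tfrac12 x_1 + 2$ and $x_1 = 4$ shows the configuration forces $(4,4)\in\m$, a contradiction. In part~(iii) your conclusion that the large $\mathbf{e}_2$-step is at least $2$ happens to be correct, but not for the reason you give; the paper deduces it from the vertex $(\vp,\v)$ satisfying $n<\vp<2n$ and $n<\v<2n$, which rules out either small step being equal to $n$.

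Your residual subcase $\Gamma = \z \times (n/2)\z$ is only sketched. The paper's route there is both cleaner and stronger: after pinning $\v = 3n/2$ and $\lv = n/2$, Lemma~\ref{Helly} applied to $\m$ and the lines $x_2 = \tfrac12 x_1 + \tfrac n2$ and $x_1 = n$ forces $(n,n)\in\m$, so this subcase is in fact vacuous. Your proposed sharpened summation may well close, but the Helly argument avoids all the bookkeeping.
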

\begin{proof}
Let $\m$ be a $N$-gon satisfying the hypothesis of the lemma.

Lemma~\ref{lem9-3} grants the estimate
$$N\le2n+2
.
$$

Assume that $n$ is even and that the vertices of $\m$ belong to a 
$(1,n/2)$-lattice $\Gamma$.  Let us show that
\begin{equation}\label{eq5-lem9-6}
N\le2n.
\end{equation}

Let $s_1$ be the small $\mathbf{e}_1$-step of~$\Gamma$ and $S_2$ be its large 
$\mathbf{e}_2$-step.  By Proposition~\ref{pr:th1-11}, we have
\begin{equation}
\label{eq6-lem9-6}
s_1S_2=\frac{n}{2}.
\end{equation}

First, assume that $s_1=1$.  Then by Proposition~\ref{pr:th1-12}, $\Gamma$ 
admits a basis of the form $(\mathbf{e}_1+a\mathbf{e}_2,(n/2)\mathbf{e}_2)$, 
where $0\le a\le n/2-1$.  If $a\ge1$, the estimate \eqref{eq5-lem9-6} follows 
from Lemma~\ref{lem9-3}.  Assume that $a=0$.  It is easily seen that in this 
case all the points of~$\Gamma$ lie on the lines $x_2=(n/2)r$, where $r\in\z$.  
In particular, inequalities~\eqref{eq5-lem9-2} and \eqref{eq7-lem9-2} become
\begin{gather*}
\v=\frac{3n}{2},
\quad
\lv=\frac{n}{2}.
\end{gather*}
Consider the vertices $\mathbf{w_1}=(\vp,3n/2)$ and $\mathbf{w_2}=(\l,n/2)$ 
of~$\m$.  By~\eqref{eq4-lem9-2} and~\eqref{eq6-lem9-2}, their first components 
satisfy
$$n<\vp<2n,\quad \l<0.$$
Taking into account that $\m$ has such vertices as well as a common point with 
the segment $[(n,0),(n,n)]$, it is not hard to check that $\m$ and the lines
$$x_2=\frac12 x_1+\frac n2,\ x_1=n$$
satisfy the hypothesis of Lemma~\ref{Helly}.  Consequently, $\m$ contains the 
point $(n,n)\in\nz$, which is impossible.  Thus, we cannot have $a=0$ and 
estimate \eqref{eq5-lem9-3} is proved for the case $s_1=1$.

Now assume that $s_1\ge2$.  If additionally $S_2\ge2$, Lemma~\ref{lem9-4} 
provides an even stronger estimate than~\eqref{eq5-lem9-6}.  Assume that 
$S_2=1$, then \eqref{eq6-lem9-6} gives $s_1=n/2$.  Consequently, all the 
points of $\Gamma$ belonging to the slab $-n+1\le x_1\le 2n-1$, which contains 
$\m$, lie on the five lines
\begin{equation}
\label{eq7-lem9-6}
x_1=\frac{nr}{2} \qquad (r=0,\pm1,\pm2).
\end{equation}
Thus, $\m$ has no more than 10 vertices, and \eqref{eq5-lem9-6} is true for 
$n\ge 6$ (remember that we are considering even $n$ at the moment).  Assume 
that $n=4$.  The vertex $(\vp,\v)$ lies on one of the 
lines~\eqref{eq7-lem9-6}, so taking into account~\eqref{eq4-lem9-2}, we see 
that necessarily $\vp=6$.  Inequalities~\eqref{eq4-lem9-2} 
and~\eqref{eq5-lem9-2} imply that
$$4 < \v \le \vp - 1 =5,$$
so necessarily $\v=5$ and $\mathbf{u}_1=(6,5)$ is a vertex of $\m$.  Likewise, 
\eqref{eq6-lem9-2} can only hold if $\l=-2$.  Consequently, 
$\mathbf{u}_2=(-2,\lv)$ is a vertex of $\m$, and according 
to~\eqref{eq7-lem9-2} we have
$$1\le \lv\le3.$$
Given that $\m$ has the vertices $\mathbf{u}_1$ and $\mathbf{u}_2$ with said 
properties as well as a common point with the segment
$$[(n,0),(n,n)]=[(4,0),(4,4)],$$
it is not hard to check that $\m$ and the lines
$$x_2=\frac12 x_1 +2,\ x_1=4$$
satisfy the hypotheses of Lemma~\ref{Helly}.  Consequently, $\m$ contains the 
point $(4,4)\in4\zz=\nz$, which is impossible.  The contradiction means that 
the vertices of a type IV$_4$ polygon cannot belong to a lattice $\Gamma$ 
having said properties.

Thus, \eqref{eq5-lem9-6} holds for any $(1, n/2)$-lattice $\Gamma$.

Finally, suppose that the vertices of $\m$ belong to a $(1,n)$-lattice 
$\Gamma$ (now there is no need to assume that $n$ is even).  Let us prove that
\begin{equation}
\label{eq8-lem9-6}
N\le2n-2.
\end{equation}
Note that as $\m$ has the vertex $(\vp,\v)$ satisfying~\eqref{eq4-lem9-2} 
and~\eqref{eq5-lem9-2}, it is clear that the both the small 
$\mathbf{e}_1$-step and the small $\mathbf{e}_2$-step of~$\Gamma$ are not 
equal to~$n$.  By Proposition~\ref{pr:th1-11}, the product of the small 
$\mathbf{e}_1$-step and the large $\mathbf{e}_2$-step of $\Gamma$ equals $n$, 
so $\Gamma$ has large $\mathbf{e}_2$-step different from 1.  Likewise, 
$\Gamma$ has large $\mathbf{e}_2$-step different from 1 as well.

Suppose that the small $\mathbf{e}_1$-step of~$\Gamma$ equals~1.  Then by 
Proposition~\ref{pr:th1-12} the lattice admits a basis of the form 
$(\mathbf{e}_1+a\mathbf{e}_2, n\mathbf{e}_2)$, where $0\le
a\le n-1$.  The equality $a=0$ is impossible, as the large $\mathbf{e}_1$-step 
of the lattece is not~1.  Consequently, we can apply Lemma~\ref{lem9-5}, which 
gives~\eqref{eq8-lem9-6}.

Otherwise, the small $\mathbf{e}_1$-step of~$\Gamma$ is greater than~1, and as 
its large $\mathbf{e}_2$-step is greater than~1 as well, we can apply 
Lemma~\ref{lem9-4} and obtain~\eqref{eq8-lem9-6}.
\end{proof}

\begin{proof}[Proof of Theorem~\ref{subtheorem-c} for type~IV$_n$ polygons.]
Let $\m$ be an arbitrary type~IV$_n$ polygon.  If the segment
$[(0,-n),(n,0)]$ splits it, we complete the proof by evoking 
Lemma~\ref{lem9-6}.  Otherwise it suffices to show that there is an affine 
automorphism of $\zz$ mapping $\m$ on a type~II$_n$ or a type~III$_n$ polygon, 
as the required estimates have already been proved for those kinds of 
polygons.

Define the automorphism $\varphi$ by
$$\varphi(x_1,x_2)=(-x_1+x_2+n,x_2).$$

By definition, the segments $[\mathbf{0},(n,0)]$ and $[(n,n),(2n,n)]$ split  
the polygon~$\m$, and so does $[\mathbf{0},(n,n)]$ by virtue of 
Lemma~\ref{lem9-1}.  Consequently, the images of those segments under 
$\varphi$---i.~e., the segments $[(n,0),\mathbf{0}]$, $[(n,n),(0,n)]$, 
$[(n,n),(n,2n)]$---split $\varphi(\m)$.  If $\m$ is also split by 
$[(n,0),(2n,n)]$, then $\varphi(\m)$ is split by $[\mathbf{0},(0,n)]$, and 
consequently, $\varphi(\m)$ is a type II$_n$ polygon.  Otherwise, the line 
$x_1-x_2=n$ does not split $\m$, so the line $x_1 = 0$ does not split 
$\varphi(\m)$ either, and the latter is a type III$_n$ polygon.
\end{proof}

\section{Type V and Va polygons}
\label{sec:v}

\subsection{Main results}

In this section we prove that for any type~V$_n$ polygon there exists an 
affine automorphism of~$\nz$ mapping it on a type~III$_n$ or a type~Va$_n$ 
polygon, the latter to be defined presently.  We find certain bounds for the 
number of vertices of type~Va polygons, which are not sufficient, however, to 
prove Theorem~\ref{subtheorem-c} for this class of polygons.  We revisit 
type~Va polygons in Section~\ref{sec:last}, establishing the missing estimate 
and completing the proof of Theorem~\ref{subtheorem-c}.

Fix an integer $n \ge 3$.

We will denote by $\Delta_n$ the triangle with the vertices $\mathbf 0$, $(2n, 
0)$, and $(0, 2n)$.  It is easy to check that $\Delta_n$ is given by the 
following system of linear inequalities:
\begin{equation}
\label{eq*-11}
\left\{
\begin{array}{c}
x_1\ge0,
\\
x_2\ge0,
\\
x_1+x_2\le 2n.
\end{array}
\right.
\end{equation}

\begin{definition}
We say that $\m$ is a \emph{type V$a_n$ polygon}, if it is free 
of~$\nz$-points and lies in~$\Delta_n$.
\end{definition}

Two following lemmas are the main results of the section.

\begin{lemma}
\label{lem:th11-1}
For any type~V$_n$ polygon there exists an affine automorphism of $\nz$ 
mapping it onto a type~III$_n$ or a type~Va$_n$ polygon.
\end{lemma}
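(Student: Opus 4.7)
The plan is to introduce a \emph{lift transition}---an affine automorphism of $\nz$ tailored to type~V$_n$ configurations---and to iterate it until the image falls in one of the two target classes (type~III$_n$ or type~Va$_n$).  I first record the geometry of a type~V$_n$ polygon $\m$: the splitting of the segments $[\mathbf 0,(-n,0)]$ and $[\mathbf 0,(0,n)]$ forces $\m$ to have nonempty interior on each side of each segment, while the non-splitting of the lines $x_1=-n$ and $x_2=n$ combined with the $\nz$-freeness confines $\m$ to the half-planes $x_1\ge-n$ and $x_2\le n$.  In the remaining two directions $\m$ may extend arbitrarily far to the right (below $x_2=n$) and downward (to the right of $x_1=-n$); taming this "overhang" is precisely what the lift transitions are designed to do.

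The natural candidates for a lift transition are compositions $\varphi(\mathbf x)=A\mathbf x+\mathbf b$ with $A$ unimodular (typically an element of the symmetry group of the square: a reflection across a coordinate axis or diagonal, or a rotation) and $\mathbf b\in\nz$.  Such $\varphi$ is an affine automorphism of $\nz$, so $\varphi(\m)$ is again an $\nz$-free integer polygon, but its type generally changes.  I would enumerate cases according to which of the lines $x_i=kn$ ($i=1,2$, $k\in\z$) have common points with $\m$ and, in each case, exhibit a lift transition whose image is either (a) a type~III$_n$ polygon (done), (b) contained in $\Delta_n$, hence a type~Va$_n$ polygon (also done), or (c) still of type~V$_n$ but with a strictly smaller value of some nonnegative integer monovariant---for instance a measure of the $\nz$-extent of $\m$ such as $\Ceil{(\p-\l)/n}+\Ceil{(\v-\n)/n}$, or the number of fundamental cells of $\nz$ meeting the bounding box of $\m$.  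In case (c) we iterate, and termination is automatic since the monovariant is a nonnegative integer.

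The main obstacle will be the case analysis itself: one has to craft a catalogue of lift transitions covering every configuration, verify that each one strictly reduces the monovariant whenever the image remains of type~V$_n$, and confirm that the terminal polygon is of type~III$_n$ or type~Va$_n$ rather than some other type.  The delicate configurations are those where the splitting and non-splitting conditions of type~V$_n$ are tight---for example, when $\m$ has an edge flush with $x_1=-n$ or $x_2=n$, or when $\m$ barely grazes the line $x_1=0$ or $x_2=0$---because under $\varphi$ the status of a segment (splitting or not) can flip.  Checking that the definitions of type~III$_n$ and type~Va$_n$ are indeed the only possible outcomes---and in particular that no iteration produces a polygon outside the type~I--VI catalogue of Theorem~\ref{th:types}---is where the bulk of the technical work lies.
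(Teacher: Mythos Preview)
Your high-level plan---iterate an $\nz$-automorphism and track an integer monovariant until the image lands in one of the two target classes---matches the paper's architecture, but the concrete transformations you propose will not do the job. Reflections and rotations (the dihedral symmetries of the square) compose with $\nz$-translations to form a group of isometries, and isometries preserve the diameter of $\m$. A type~V$_n$ polygon can have arbitrarily large extent in the positive $x_1$-direction and the negative $x_2$-direction, so no sequence of such maps can force it into $\Delta_n$; correspondingly, neither of your suggested monovariants (bounding-box $\nz$-extent, number of fundamental cells met) will strictly decrease under these maps. The missing ingredient is a \emph{shear}.

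The paper's lift is the shear $A_a(x_1,x_2)=(x_1,\,x_2-ax_1)$ with $a\ge 0$ chosen \emph{maximal} subject to the segment $[\mathbf 0,(-n,0)]$ still splitting $A_a\m$; such a maximum exists because a linear form separating $\mathbf 0$ from $\m$ gives an explicit upper bound on $a$. The monovariant is simply $\n(\cdot)$: one shows $\n(A_{a_0}\m)\ge\n(\m)$, with equality only when $a_0=0$. The iteration alternates this lift with the reflection $(x_1,x_2)\mapsto(-x_2,-x_1)$ about $x_1+x_2=0$, which swaps the roles of the two splitting segments and lets the next lift act on the other overhang. Along the way, if the segment $[(-n,n),(0,n)]$ ever splits the lifted polygon, translating by $(n,0)$ produces a type~III$_n$ polygon and one stops. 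Otherwise the bounded increasing sequences $\n(\m_k)$, $\n(\m_k')$ stabilise; stabilisation means $a_0=0$ on both sides, and the maximality condition then forces the polygon into the triangle with vertices $(-n,-n)$, $(-n,n)$, $(n,n)$, which a final affine automorphism of $\nz$ carries onto $\Delta_n$. There is essentially a single dichotomy at each step rather than the broad case analysis you anticipate; the real work is in the three short lemmas establishing existence of the maximal $a_0$, preservation of the relevant splitting conditions under the lift, and strict increase of $\n$ when $a_0>0$.
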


The proof is given in Section~\ref{sec:lift}.

\begin{lemma}
\label{lem:th11-2}
Suppose that $\m$ is a type~Va$_n$ $N$-gon; then
$$N\le 2n+2,$$
and if the vertices of $\m$ belong to a $(1,n)$-lattice, then
$$N\le2n-2.$$
\end{lemma}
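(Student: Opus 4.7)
The plan is to apply Proposition~\ref{pr:slp} to each of the four maximal slopes $Q_1, Q_2, Q_3, Q_4$ of $\m$.  For each $k \in \{1, 2, 3, 4\}$ this furnishes a nonnegative integer $s_k \le N_k$ satisfying $2N_k \le |b_{k,1}| + s_k$ and $|b_{k,2}| \ge s_k(s_k+1)/2$, where $(|b_{k,1}|, |b_{k,2}|)$ is the displacement of $Q_k$ in its associated basis.  Concretely $|b_{1,1}| = \pn - \n$, $|b_{1,2}| = \p - \np$, and analogously for $k = 2, 3, 4$ in terms of $\p, \vp, \pv, \v, \lv, \vl, \nl, \l, \ln$.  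Summing over $k$ and invoking Proposition~\ref{pr:boundary} together with $M_k \le 1$ gives the preliminary estimate
\begin{equation*}
2N \le (\pn + \p + \v + \nl) - (\n + \vp + \lv + \l) + \sum_{k=1}^{4} s_k + 2 \sum_{k=1}^{4} M_k.
\end{equation*}

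I then bring in the geometric constraints.  Since every vertex of $\m$ lies in $\Delta_n$, we have $\p + \pn \le 2n$, $\vp + \v \le 2n$, $\nl + \n \le 2n$, $\l + \lv \le 2n$, together with the non-negativity of all coordinates.  Each bound $|b_{k,2}| \le 2n$ forces $s_k = O(\sqrt n)$, so $\sum s_k$ is of lower order.  Because $(n, n) \in \nz$ is absent from $\m$, Lemma~\ref{Helly} applied to $\m$ and the lines $x_1 = n$, $x_2 = n$ forces $\m$ to miss at least one of the four angles at $(n, n)$; after using the symmetry $x_1 \leftrightarrow x_2$ (a unimodular automorphism preserving $\nz$ and $\Delta_n$) the missing-angle analysis reduces to a few configurations.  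In each, the confinement of $\m$ to a proper sub-region of $\Delta_n$ (for instance $\m \subset \{x_1 \le n\}$ in the upper-right-missed case) sharpens the bounding-box estimate by the requisite amount and produces $N \le 2n + 2$.

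For the $(1, n)$-lattice bound, on slopes equipped with a splitting integer frame, Theorem~\ref{th3-8} replaces $2N_k \le |b_{k,1}| + s_k$ by $2N_k \le |b_{k,1}| + s_k - 1$; and Proposition~\ref{pr:th5-1}, used with the large $\mathbf{e}_1$- and $\mathbf{e}_2$-steps of a $(1, n)$-lattice (the former at least $2$, the latter equal to $n$), provides sharper edge-count inequalities linking $\np - \nl$, $\pv - \pn$, $\vp - \vl$, and $\lv - \ln$ to the $M_k$.  Propagated through the summation, these refinements tighten the bound to $N \le 2n - 2$.  I expect the principal obstacle to be the case analysis driven by Lemma~\ref{Helly}, since each missing-angle configuration yields a slightly different set of effective constraints on the bounding-box parameters; without this structural input the four slope inequalities alone only yield $N \le 3n + O(\sqrt n)$, short of the required bound.
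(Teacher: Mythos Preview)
Your outline has two genuine gaps, one in each half of the lemma.

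For the bound $N\le 2n+2$: you correctly observe that applying Proposition~\ref{pr:slp} to all four slopes only yields $2N\le 3n+O(\sqrt n)$, and you hope that the Helly analysis at $(n,n)$ closes a gap of order~$n$.  But your only concrete instance---``$\m\subset\{x_1\le n\}$ in the upper-right-missed case''---is precisely the type~I$_n$ situation, which the paper also disposes of immediately.  The substantive case is when \emph{both} lines $x_1=n$ and $x_2=n$ split~$\m$, and there your plan gives no mechanism for saving~$n$.  The paper's proof (Lemma~\ref{lem11-3}) does not treat $Q_2$ symmetrically with the others: it uses that the frame $((n,n);-\mathbf e_2,-\mathbf e_1)$ splits $Q_2$ and \emph{forms small angle} with it, so Corollary~\ref{cor3-6} contributes the extra term $-\lceil(\p-n)/2\rceil$.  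That saving, fed back through \eqref{eq3-lem11-3} and a delicate quadratic-inequality contradiction, is what bridges the gap.  The argument also needs a further reduction via the automorphism $(x_1,x_2)\mapsto(-x_1-x_2+2n,\,x_2)$ of $\Delta_n$ to force one of the hypotheses \eqref{eqA-lem11-3} or \eqref{eqB-lem11-3}.  None of this is captured by ``missing an angle at $(n,n)$.''

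For the bound $N\le 2n-2$ under the $(1,n)$-lattice hypothesis: your claim that a $(1,n)$-lattice has large $\mathbf e_1$-step at least~$2$ and large $\mathbf e_2$-step equal to~$n$ is false in general (take $\Gamma=n\z\times\z$, whose large $\mathbf e_2$-step is~$1$), so the Proposition~\ref{pr:th5-1} refinements you invoke are not uniformly available.  Moreover, Theorem~\ref{th3-8} requires a splitting frame, which you do not have for every slope in every configuration.  The paper (Lemma~\ref{lem11-4}) takes an entirely different route: it applies a linear map sending $\Gamma$ to $\zz$ and $\nz$ to a lattice with basis $(n\mathbf e_1,\mathbf e_2)$, then bounds~$N$ by directly counting how many horizontal lines can meet the image triangle, with a short case analysis on the parameter~$a$ in the basis $(\mathbf e_1-a\mathbf e_2,\,n\mathbf e_2)$ of~$\Gamma$.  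No slope estimates are used at all in this part.
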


The proof is given in Section~\ref{sec:v-pf2}.

\subsection{The lift}
\label{sec:lift}

Let $\m$ be an integer polygon free of $\nz$-points, where $n \ge 3$ is an 
integer.  Assume that the segments
$[\mathbf{0},(-n,0)]$ and $[\mathbf{0},(0,n)]$ split $\m$.  In particular, 
$\m$ can be any type~V$_n$ polygon.  Given $a\in\z$, consider the unimodular 
transformation
$$A_a
=
\begin{pmatrix}
1 & 0 \\
-a & 1
\end{pmatrix}
$$
and the polygon $\m_a=A_a\m$.
\begin{lemma}
\label{lem11-01}
The set of such $a \in \z$ that~$\m_a$ is split by the segment 
$[\mathbf{0},(-n,0)]$ is nonempty and has a nonnegative maximal element.
\end{lemma}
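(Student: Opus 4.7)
The plan is to translate the splitting condition on $\m_a$ into a geometric condition on $\m$ itself, and then show that the resulting set of admissible $a$ is nonempty and bounded above; the existence of a nonnegative maximum follows.

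Since $A_a$ preserves the $x_1$-coordinate and acts on the second coordinate by $x_2 \mapsto x_2 - a x_1$, a point $(y_1, y_2)$ lies in $\m_a$ if and only if $(y_1,\, y_2 + a y_1) \in \m$.  In particular, $\m_a \cap \{x_2 = 0\}$ corresponds (via the first coordinate) to $\m \cap \ell_a$, where $\ell_a := \{x_2 = a x_1\}$.  Consequently $\m_a$ is split by $[\mathbf{0},(-n,0)]$ if and only if in the original coordinates
\begin{enumerate}[(i)]
\item $\m$ has points on both sides of $\ell_a$, and
\item every point $(x_1, x_2) \in \m \cap \ell_a$ satisfies $-n \le x_1 \le 0$.
\end{enumerate}
For $a = 0$ these conditions are exactly the hypothesis that $[\mathbf{0},(-n,0)]$ splits $\m$, so $0$ belongs to the set.

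For boundedness from above I will use the second hypothesis.  Because $[\mathbf{0},(0,n)]$ splits $\m$, the intersection $\m \cap \{x_1 = 0\}$ is a chord of positive length contained in $\{0\}\times [0,n]$; its upper endpoint is a point $(0, e) \in \m$ with $e > 0$, and the same hypothesis furnishes some $(c, d) \in \m$ with $c > 0$.  By convexity, the segment joining $(0, e)$ to $(c, d)$, parametrized as $(tc, (1-t)e + td)$ for $t \in [0, 1]$, lies in $\m$, and a direct computation shows it meets $\ell_a$ at $t = e/(e - d + ac)$.  As soon as $ac > \max(d, 0)$, this $t$ lies in the open interval $(0, 1)$, so the corresponding point of $\m \cap \ell_a$ has $x_1 = tc > 0$, violating (ii).  Hence every sufficiently large positive integer $a$ is excluded, the set is bounded above, and its maximum therefore exists and is nonnegative, since $0$ lies in it.

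The only delicate step is the faithful encoding of ``splits'' into (i) and (ii); once that is in place, the rest is a short convexity argument, and the hypothesis that $\m$ is free of $\nz$-points plays no role.
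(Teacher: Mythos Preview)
Your proof is correct and takes a genuinely different route from the paper's. Both arguments dispatch nonemptiness the same way (observe that $a=0$ works) and then bound the set from above, but the mechanisms differ. The paper uses the hypothesis $\mathbf{0}\notin\m$ (a consequence of $\m$ being $\nz$-free) to produce a linear form $\ell$ positive on $\m$; pushing $\ell$ forward by $A_a$, it shows that for all sufficiently large $a$ the polygon $\m_a$ has \emph{no common point} with $[\mathbf{0},(-n,0)]$. You instead pull the splitting condition back to $\m$ via the line $\ell_a=\{x_2=ax_1\}$, extract the concrete points $(0,e)$ and $(c,d)$ from the hypothesis that $[\mathbf{0},(0,n)]$ splits $\m$, and use convexity to place a point of $\m\cap\ell_a$ in the half-plane $x_1>0$, violating your condition~(ii). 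Your argument is a shade more elementary and, as you note, makes no use of the $\nz$-free assumption; the paper's separating-form argument is a little slicker and yields the marginally stronger conclusion that $\m_a$ is eventually disjoint from the segment rather than merely not split by it.
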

\begin{proof}
Obviously, $\m_0=\m$, so the set in question contains 0 and its maximal 
element, if it exists, is nonnegative.  To prove the lemma, it remains to show 
that the set is bounded from above, i.~e.\ that the segment 
$[\mathbf{0},(-n,0)]$ does not split the polygon $\m_a$ for large $a$.

As $\m$ does not contain the point $\mathbf{0}\in\nz$, there exists a linear 
form $\ell(x_1, x_2) = \alpha_1 x_1 + \alpha_2 x_2$ such that
\begin{equation}
\label{eq1-lem11-1}
\ell(\mathbf{x}) > 0,\  \mathbf x\in\m.
\end{equation}
Choosing points $\check{\mathbf x} \in \m \cap [\mathbf 0, (0, n)]$ and $\hat 
{\mathbf x} \in \m \cap [\mathbf 0, (-n, 0)]$, so that  $\check{\mathbf 
x}=(0,\check{x}_2)$, $\check{x}_2>0$, and $\hat{\mathbf x}=(\hat{x}_1,0)$, 
$\hat{x}_1<0$ and computing $\ell(\check{\mathbf x})$ and $\ell(\hat{\mathbf 
x})$, we see that in view of \eqref{eq1-lem11-1},
\begin{equation}\label{eq2-lem11-1}
\alpha_1<0,\ \alpha_2 > 0.
\end{equation}

Fix an integer~$a$ such that
\begin{equation}\label{eq4-lem11-1}
a\ge-\frac{\alpha_1}{\alpha_2}.
\end{equation}
Consider the linear form
$$\tilde \ell(\tilde x_1, \tilde x_2)=(\alpha_1+a\alpha_2)\tilde x_1+\alpha_2 
\tilde x_2.$$
It is easy to check that $\ell(\mathbf x) = \tilde \ell(\tilde{\mathbf x})$ 
whenever $\mathbf x = A_a^{-1}\tilde{\mathbf x}$.  In particular, if 
$\tilde{\mathbf x} \in \m_a$, we have $\mathbf x = A_{a_0}^{-1} \tilde{\mathbf 
x} \in \m$, and according to \eqref{eq1-lem11-1}, we obtain
\begin{equation}
\label{eq5-lem11-1}
\tilde \ell(\tilde{\mathbf{x}}) > 0,\quad \tilde{\mathbf{x}}\in\m_a.
\end{equation}
On the other hand, if $\tilde{\mathbf{x}}=(\tilde x_1,0)\in[0,(-n,0)]$, then 
$\tilde x_1 \le 0$, and
\begin{equation}
\label{eq6-lem11-1}
\tilde \ell(\tilde{\mathbf{x}})=(\alpha_1+a\alpha_2)\tilde x_1<0,\quad
\tilde{\mathbf{x}}\in[\mathbf{0},(-n,0)]
\end{equation}
according to the choice of $a$.  Comparing~\eqref{eq5-lem11-1} 
and~\eqref{eq6-lem11-1}, we see that $\m_a$ has no common points with the 
segment $[\mathbf{0},(-n,0)]$.  This is true for any~$a$ satisfying 
\eqref{eq4-lem11-1}, so the set in question is bounded from above, as claimed. 
\end{proof}

Let $a_0 \ge 0$ be the greatest integer such that $\m_{a_0}$ is split by the 
segment $[\mathbf{0},(-n,0)]$.  We say that the polygon $\tm=\m_{a_0}$ is the 
\emph{lift} of $\m$ and that $A_{a_0}$ is the \emph{lift transformation} of 
$\m$.

\begin{lemma}
\label{lem11-02}
Let that $\tm$ be the lift of $\m$; then the segment $[\mathbf{0},(0,n)]$ 
splits $\tm$ and the segment $[\mathbf{0},(-n,-n)]$ does not.  If the 
segment~$[(0, n), (n, 2n)]$ does not split~$\m$, it does not split~$\tm$ 
either.
\end{lemma}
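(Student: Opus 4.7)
Parts 1 and 2 follow from direct observations about how the shear $A_a$ acts on the segments involved.  Since $A_a(0, x_2) = (0, x_2)$, the segment $[\oo,(0,n)]$ is pointwise fixed by~$A_{a_0}$, so it splits $\tm = A_{a_0}\m$ if and only if it splits~$\m$, which it does by the standing hypothesis.  For part~2, note that $\m_{a_0+1} = A_1\tm$, so $[\oo,(-n,0)]$ splits $\m_{a_0+1}$ if and only if its $A_1^{-1}$-preimage $[\oo,(-n,-n)]$ splits~$\tm$; the maximality of~$a_0$ given by Lemma~\ref{lem11-01} rules out the former and therefore the latter.

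Part~3 is the substantive one.  Applying $A_{a_0}^{-1}$, it is equivalent to show: if $[(0,n),(n,2n)]$ does not split~$\m$, then $[(0,n),(n,(a_0+2)n)]$ does not split~$\m$.  The case $a_0 = 0$ is trivial, so I assume $a_0 \ge 1$ and argue by contradiction.  The splitting of $[(0,n),(n,(a_0+2)n)]$ yields an interior point $B = (t, n + (a_0+1)t) \in \m$ with $t \in (0,n)$, while the splitting of $[\oo,(0,n)]$ gives a point $A = (0, t_1) \in \m$ with $t_1 \in (0,n)$.  Since $A$ lies strictly below and $B$ strictly above the line $L_0 \colon x_2 = x_1 + n$, convexity forces~$L_0$ to cross the interior of~$\m$; the non-splitting hypothesis then forces the chord of~$\m$ on~$L_0$ to extend beyond $[(0,n),(n,2n)]$ in exactly one direction, since escape on both sides would put either $(0,n)$ or $(n,2n)$ on the chord and hence in~$\m$.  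This produces either a point $D = (d, d+n) \in \m$ with $d < 0$, or a point $E = (e, e+n) \in \m$ with $e > n$.

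The contradiction in each case is obtained by writing down explicit barycentric coefficients exhibiting $(0,n) \in \co(A, B, D)$ in the first subcase and $(n,2n) \in \co(A, B, E)$ in the second; either membership violates the $\nz$-freeness of~$\m$.  I expect the main obstacle to be the positivity check for these barycentric coefficients: in both subcases it reduces to a sign inequality that crucially uses $a_0 \ge 1$, since the $a_0 t$ contribution from the vertical separation of $B$ above $L_0$ is exactly what supplies the slack needed for nonnegativity (concretely, one gets bounds of the form $-d\,a_0\,t \ge 0$ and $-(n - t_1)(n - t) \le n\,a_0\,t$).  Conceptually, the most delicate step is the dichotomy argument that the non-splitting hypothesis forces the $L_0$-chord to escape the segment $[(0,n),(n,2n)]$ on only one side; this rests on the fact that $(0,n)$ and $(n,2n)$ are consecutive $\nz$-points of~$L_0$, so a chord straddling them would immediately produce a forbidden $\nz$-point in~$\m$.
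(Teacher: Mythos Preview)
Your treatment of the first two assertions coincides with the paper's.  For the third, the paper takes a much shorter route: rather than pulling the statement back to~$\m$ and arguing by contradiction via convex combinations, it observes that the hypothesis ``$[(0,n),(n,2n)]$ does not split $\m$'' forces
\[
\m \cap \{0 \le x_1 \le n\} \subseteq \{x_2 \le x_1 + n\},
\]
and then simply notes that the shear $A_{a_0}$ fixes $x_1$ and, for $x_1 \ge 0$ and $a_0 \ge 0$, does not increase $x_2$.  Hence the same inclusion persists for $\tm = A_{a_0}\m$, and the segment cannot split~$\tm$ either.  This monotonicity argument avoids all case analysis.

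Your approach is also valid, but you have overlooked a shortcut hidden inside it.  Once $A=(0,t_1)$ and $B=(t,n+(a_0+1)t)$ are in hand, the segment $[A,B]$ crosses $L_0$ at a point with first coordinate in $(0,t)\subset(0,n)$, so the chord of~$\m$ on~$L_0$ already contains a point with $x_1\in(0,n)$.  Your own connectedness reasoning (``escape would put $(0,n)$ or $(n,2n)$ on the chord and hence in~$\m$'') then applies to escape on \emph{either} side, not just both, and yields the contradiction immediately.  The barycentric verifications of $(0,n)\in\co(A,B,D)$ and $(n,2n)\in\co(A,B,E)$ do go through as you sketch, but they are redundant once this observation is made.
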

\begin{proof}
The line $x_1 = 0$ is invariant under the lift transformation and since the 
segment $[\mathbf{0},(0,n)] = A_{a_0}[\mathbf{0},(0,n)]$ splits $\m$, it 
splits $A_{a_0}\m = \tm$ as well.

By the definition of $a_0$, the segment $[\mathbf 0, (-n,0)]$ does not split 
the polygon $A_{a_0+1}\m = A_1 \tm$.  Consequently, the segment $[\mathbf 0, 
(-n, -n)] = A_1^{-1}[\mathbf 0, (-n,0)]$ does not split $\tm$, as claimed.

Suppose that the segment $[(0, n), (n, 2n)]$ does not split~$\m$.  The the 
intersection $\m \cap \{0 \le x_1 \le n\}$ lies in the half-plane $x_2 \le x_1 
+ n$.  It suffices to check that the intersection $\tm \cap \{0 \le x_1 \le 
n\}$ lies in the same half-plane.  Indeed, let $(\hat x_1, \hat x_2) \in \tm$ 
and $0 \le \hat x_1 \le n$; then $\hat x_1 = x_1$ and $\hat x_2 = -a_0 x + 
x_2$ for some $(x_1, x_2) \in \m \cap \{0 \le x_1 \le n\}$, so $\hat x_2 \le 
x_2 \le x_1 + n = \hat x_1 + n$, as claimed.
\end{proof}

\begin{lemma}
\label{lem11-03}
Suppose that $\tm$ is the lift of $\m$; then $\n(\tm)\ge\n(\m)$ and 
$\n(\tm)=\n(\m)$ if and only if $\m = \tm$.
\end{lemma}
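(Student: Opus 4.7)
The plan is to pick any vertex of $\tm$ achieving $\n(\tm)$, show that its first coordinate is strictly negative by combining convexity with the two splitting conditions available for $\tm$, and then transfer the comparison back to $\m$ via the inverse lift transformation.

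First I observe that $\tm$ inherits both splitting conditions. Lemma~\ref{lem11-02} provides that $[\mathbf{0}, (-n, 0)]$ splits $\tm$, while the lift transformation $A_{a_0}$ fixes the line $x_1 = 0$ pointwise, so $[\mathbf{0}, (0, n)]$ splits $\tm$ as well. Since $\tm$ is free of $\nz$-points (in particular $\mathbf{0} \notin \tm$), the chord $\tm \cap \{x_2 = 0\}$ is a non-degenerate subsegment of $[\mathbf{0}, (-n, 0)]$ whose first coordinates are strictly negative, and $\tm \cap \{x_1 = 0\}$ is a non-degenerate subsegment of $[\mathbf{0}, (0, n)]$ whose second coordinates are strictly positive. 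In particular $\n(\tm) < 0$.

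The key step, and the only one requiring nontrivial work, is to show that any vertex $(v_1^*, v_2^*)$ of $\tm$ with $v_2^* = \n(\tm)$ satisfies $v_1^* < 0$. Pick a point $(0, c) \in \tm$ with $c > 0$ from the $y$-axis chord. Were $v_1^* \ge 0$, convexity would put the entire segment from $(0, c)$ to $(v_1^*, v_2^*)$ inside $\tm$; since $v_2^* < 0 < c$, this segment crosses the line $x_2 = 0$ at the point with first coordinate $v_1^* c / (c - v_2^*) \ge 0$, which would then belong to $\tm \cap \{x_2 = 0\}$, contradicting the strict negativity of first coordinates on that chord. Hence $v_1^* < 0$.

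To conclude, the preimage $A_{a_0}^{-1}(v_1^*, v_2^*) = (v_1^*, v_2^* + a_0 v_1^*)$ is a vertex of $\m$, so $v_2^* + a_0 v_1^* \ge \n(\m)$, giving $\n(\tm) = v_2^* \ge \n(\m) - a_0 v_1^*$. Since $v_1^* < 0$ and $a_0 \ge 0$, the correction $-a_0 v_1^*$ is nonnegative and vanishes exactly when $a_0 = 0$. Therefore $\n(\tm) \ge \n(\m)$ with equality iff $a_0 = 0$, which is equivalent to $\m = \tm$.
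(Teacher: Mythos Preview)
Your proof is correct and follows essentially the same approach as the paper: both show that any vertex of $\tm$ achieving $\n(\tm)$ has strictly negative first coordinate (the paper via $\np(\tm)\le -1$ from Proposition~\ref{pr:th5-3}, you via a direct convexity argument), then transfer back through $A_{a_0}^{-1}$. One small mix-up: the splitting of $\tm$ by $[\mathbf{0},(-n,0)]$ is the \emph{definition} of the lift, while Lemma~\ref{lem11-02} is what provides the splitting by $[\mathbf{0},(0,n)]$---you have these references swapped, though both facts are valid.
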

\begin{proof}
If $a_0 = 0$, we have $\m=\tm$, so $\n(\tm)=\n(\m)$.  It remains to show that 
\begin{equation}\label{eq6-lem11-1}
a_0\ge1
\end{equation}
implies
\begin{equation}\label{eq7-lem11-1}
\n(\tm)>\n(\m).
\end{equation}
As the segments $[\mathbf{0},(-n,0)]$ and $[\mathbf{0},(0,n)]$ split both~$\m$ 
and~$\tm$ , by Proposition~\ref{pr:th5-3} the frame 
$(\mathbf{0};-\mathbf{e}_1,\mathbf{e}_2)$ splits the slopes~$Q_1(\m)$ 
and~$Q_1(\tm)$, whence
$$\np(\m)\le-1,\quad\np(\tm)\le-1.$$
Thus,
$$\n(\m)=\min\{x_2\colon (x_1,x_2)\in\m,\;x_1\le
-1\},$$
$$\n(\tm)=\min\{x_2\colon (x_1,x_2)\in\tm,\;x_1\le
-1\}.$$
Using these representations and \eqref{eq6-lem11-1}, we get
\begin{align*}
\n(\tm)&=\min\{\hat x_2\colon (\hat x_1,\hat x_2)\in\tm,\,\hat x_1\le-1\}
\\
&=\min\{-a_0x_1+x_2\colon (x_1,x_2)\in\m,\,x_1\le-1\}
\\
&\ge\min\{x_2+1\colon (x_1,x_2)\in\m,\,x_1\le-1\}=\n(\m)+1,
\end{align*}
so \eqref{eq7-lem11-1} is proved.
%
%
\end{proof}
\begin{lemma}
\label{lem11-2}
Let~$\m$ be a type~V$_n$ polygon and $\tm$ be its lift.  Then either $\tm$ is 
a type~V$_n$ polygon, or the translation of $\tm$ by the vector $(n, 0)$ is a 
type~III$_n$ polygon.
\end{lemma}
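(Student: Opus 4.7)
The plan is to verify that $\tm$ automatically satisfies three of the four defining conditions of a type~V$_n$ polygon, and then to case-split on the remaining one.  First I would collect the constraints on $\tm$: by the maximality in the definition of the lift the segment $[\mathbf 0,(-n,0)]$ splits $\tm$, and Lemma~\ref{lem11-02} gives that $[\mathbf 0,(0,n)]$ splits $\tm$ as well.  Since $\m$ is type~V$_n$ and meets the interior of $[\mathbf 0,(-n,0)]$, we have $\m\subseteq\{x_1\ge -n\}$; as $A_{a_0}$ preserves the first coordinate, $\tm\subseteq\{x_1\ge -n\}$, so the line $x_1=-n$ does not split $\tm$.  Likewise $\m\subseteq\{x_2\le n\}$, and pushing forward by $A_{a_0}$ gives $\tm\subseteq\{a_0x_1+x_2\le n\}$.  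Also, $A_{a_0}$ is a unimodular automorphism of $\nz$, so $\tm$ is $\nz$-free.

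If the line $x_2=n$ does not split $\tm$, all four type~V$_n$ conditions are met and $\tm$ is a type~V$_n$ polygon, giving the first alternative.  Otherwise $x_2=n$ splits $\tm$, which in particular forces $a_0\ge 1$, since if $a_0=0$ we would have $\tm=\m$ and $\m$ is assumed to be type~V$_n$.  I would then examine the nondegenerate segment $S=\tm\cap\{x_2=n\}$.  The two half-plane constraints above give $S\subseteq\{-n\le x_1\le 0\}$, where the upper bound uses $a_0\ge 1$.  The endpoints of $S$ belong to $\tm$ and hence cannot be $\nz$-points; in particular they are distinct from $(-n,n)$ and $(0,n)$, the only candidate $\nz$-points on this closed interval.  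No $\nz$-point lies in the interior of $S$ either.  Thus $S$ lies strictly between $x_1=-n$ and $x_1=0$, so $\tm$ meets the interior of $[(0,n),(-n,n)]$, and together with the fact that the line $x_2=n$ splits $\tm$ this shows that the segment $[(0,n),(-n,n)]$ splits $\tm$.

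To conclude, comparing with Definition~\ref{def:types} after translating by $(n,0)$, the three splitting conditions for $\tm+(n,0)$ to be a type~III$_n$ polygon are precisely that $\tm$ be split by $[(-n,0),\mathbf 0]$, $[\mathbf 0,(0,n)]$ and $[(0,n),(-n,n)]$, all of which are established, and the non-splitting condition on the line $x_1=0$ translates to the line $x_1=-n$ not splitting $\tm$, which also holds.  Since $(n,0)\in\nz$, the translate $\tm+(n,0)$ remains $\nz$-free, so it is a type~III$_n$ polygon.  The main technical point is the $\nz$-freeness argument in the middle paragraph that confines $S$ to the open portion of the top side of the square; once the half-plane containments $\tm\subseteq\{x_1\ge -n\}\cap\{a_0x_1+x_2\le n\}$ are in hand, the rest is bookkeeping against the type definitions.
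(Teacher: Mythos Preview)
Your proof is correct and follows the same overall plan as the paper's: establish the three conditions common to both outcomes (the two splitting segments and the non-splitting line $x_1=-n$), then case-split on the remaining one. The paper splits on whether the segment $[(-n,n),(0,n)]$ splits $\tm$ and, in the negative case, shows $\v(\tm)\le n$ by separately bounding $x_2$ on the regions $x_1\le 0$ and $x_1\ge 0$. You instead split on whether the \emph{line} $x_2=n$ splits $\tm$ and, in the affirmative case, use the pushed-forward half-plane constraint $\tm\subseteq\{a_0x_1+x_2\le n\}$ together with $a_0\ge 1$ (forced since $\tm\ne\m$) to confine $\tm\cap\{x_2=n\}$ to the open segment between $(-n,n)$ and $(0,n)$. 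This is a slightly tidier single-containment argument that replaces the paper's two-region estimate; both routes yield the same dichotomy.
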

\begin{proof}
Let $T$ be the translation by the vector $(n, 0)$.  Note that $\tm$ and~$T\tm$ 
are obtained by applying affine automorphisms os $\nz$ to~$\m$, so they are 
free of points of this lattice.

The polygon $\tm$ is split by the segments $[\mathbf{0},(0,n)]$ (by 
Lemma~\ref{lem11-02}) and $[\mathbf{0},(-n,0)]$ (by the definition of lift), 
but not by the line $x_1 = -n$ (because by the definition of a type~V$_n$ 
polygon this line does not split $\m$ and it is invariant under the lift 
transformation).  Assume for a moment that the segment $[(0,n),(-n,n)]$ splits 
$\tm$. Then the segments
\begin{gather*}
[\mathbf{0},(n,0)]=T[(-n,0),\mathbf{0}],
\\
[(n,0),(n,n)]=T[\mathbf{0},(0,n)],
\\
[(0,n),(n,n)]=T[(-n,n),(0,n)]. 
\end{gather*}
split $T\m$, while the line $x_1 = 0$, being the image of $x_1 = -n$ under 
$T$, does not.  Consequently, $T\m$ is a type~III$_n$ polygon.

It remains to show that if the segment $[(0,n),(-n,n)]$ does not split $\tm$, 
the latter is a type~V$_n$ polygon.  We know already that the segments 
$[\mathbf{0},(-n,0)]$ and $[\mathbf{0},(0,n)]$ split $\tm$, while the line 
$x_1 = -n$ does not, so we only need to show that the line $x_2 = n$ does not 
split $\tm$ either, or, equivalently, that $\v(\tm) \le n$.

As $\tm$ lies in the half-plane $x_1 \ge -n$ and the segment $[(-n,n),(0,n)]$ 
does not split $\tm$, it is clear that
\begin{equation}
\label{eq1-lem11-2}
\max\{x_2 \colon (x_1, x_2) \in \tm, x_1 \le 0\} \le n
.
\end{equation}
On the other hand,
\begin{multline}
\label{eq2-lem11-2}
\max\{x_2\colon (x_1,x_2)\in\tm, x_1 \ge 0\}
\\
=\max\{-a_0x_1'+x_2'\colon (x_1',x_2')\in\m,x_1\le 0\}
\\
\le \max\{x_2'\colon (x_1',x_2')\in\m,x_1\le 0\} \le \v(\m) \le n
.
\end{multline}
Estimates~\eqref{eq1-lem11-2} and~\eqref{eq2-lem11-2} imply that $\v(\tm) \le 
n$, as claimed.
\end{proof}
\begin{proof}[Proof of Lemma~\ref{lem:th11-1}]
Take a type~V$_n$ polygon~$\m_0$, and let $\tm_0$ be its lift.  If the 
translation by the vector $(n,0)$ maps $\tm_0$ onto a type~III$_n$ polygon, we 
are done.  Otherwise, by Lemma~\ref{lem11-2}, $\tm_0$ is a type~V$_n$ polygon.  
Let $\m_0'$ be the reflection of $\tm_0$ about the line $x_1 + x_2 = 0$.  It 
is easy to check that it is again a type~V$_n$ polygon.  Let $\tm_0'$ be its 
lift.  As before, either the translation of $\tm_0'$ by $(n,0)$ is a 
type~III$_n$ polygon and we are done, or $\tm_0'$ is a type~V$_n$ polygon, in 
which case we define the type~V$_n$ polygon $\m_1$ to be the reflection of 
$\tm_0'$ about the line $x_1 + x_2 = 0$.

Iterating this procedure, we either find an affine automorphism of $\nz$ 
mapping $\m_0$ onto a type~III$_n$ polygon, or construct the sequences of 
type~V$_n$ polygons $\{\m_k\}$, $\{\tm_k\}$, $\{\m_k'\}$, and
$\{\tm_k'\}$.  In the latter case consider the sequence of integers
$\{\n(\m_k)\}_{k=0}^{\infty}$.  As $\m_k$ are type~V$_n$ polygons, it is 
easily seen that the the members of this sequence are negative (this follows 
e.~g.\ from the fact that by Proposition~\ref{pr:th5-3} the frame 
$(\mathbf{0};-\mathbf{e}_1,-\mathbf{e}_2)$ splits any type~V$_n$ polygon).  
Observe that the sequence increases.  Indeed, it is easy to check that
\begin{equation*}
\n(\m_{k+1}) = - \p(\tm_k') = - \p(\m_k') = \n(\tm_k)
;
\end{equation*}
furthermore, by Lemma~\ref{lem11-03} we have
$$\n(\tm_k)\ge\n(\m_k).$$
Thus, we see that the sequence $\{\n(\m_k)\}$ increases; moreover, we have 
$\n(\m_{k+1}) = \n(\m_k)$ if and only if $\n(\tm_k) = \n(\m_k)$, which by 
Lemma~\ref{lem11-03} is equivalent to $\tm_k = \m_k$.

The sequence of integers $\{\n(\m_k)\}$ increases and is bounded from above, 
so it stabilises.  We show in the same way that the sequence $\{\n(\m_k')\}$ 
stabilises, too.   Consequently, there exists $k_0$ such that $\m_{k_0} = 
\tm_{k_0}$ and $\m'_{k_0} = \tm'_{k_0}$.  Then also $\m_{k_0} = \m_{k_0 + 1}$.  
Set $\tm = \m_{k_0}$.

We claim that $\tm$ lies in the triangle $\Delta$ having the vertices $(-n, 
-n)$, $(-n, n)$, and $(n, n)$, which is the solution set of the system
\begin{equation*}
\left\{
\begin{array}{l}
x_1 \ge -n,
\\
x_2 \le n,
\\
x_1 - x_2 \le 0
.
\end{array}
\right.
\end{equation*}
Since $\tm$ is a type~V$_n$ polygon, it lies in the angle
\begin{equation*}
\left\{
\begin{array}{l}
x_1 \ge -n,
\\
x_2 \le n
.
\end{array}
\right.
\end{equation*}
The intersection of the line $x_1 - x_2 = 0$ with this angle is the segment 
$[(-n, -n), (n, n)]$, so we only need to show that neither of the segments 
$I_1 = [(-n, -n), \mathbf 0]$ and $I_2 = [\mathbf 0, (n, n)]$ splits~$\tm$.  
It the case of the former this is true by Lemma~\ref{lem11-02}, as~$\tm$ is 
the lift of~$\m_{k_0}$.  Likewise, $I_1$ does not split $\tm_{k_0}'$, 
so~$I_2$, being the reflection of~$I_1$ about the line $x_1 + x_2 = 0$, does 
not split $\m_{k_0+1} = \tm$, as claimed.

By construction, $\tm = B\m$, where $B$ is a unimodular transformation.  The 
affine automorphism of $\nz$ defined by
$$\psi(x_1,x_2)=(x_1+n,-x_2+n)$$
maps $\Delta$ onto $\Delta_n$.  Consequently, the polygon $\psi(B\m)$ lies 
in~$\Delta_n$, i.~e.\ $\varphi=\psi B$ is the required automorphism.
\end{proof}

\subsection{Bounds on the number of vertices of type~Va$_n$ polygons}
\label{sec:v-pf2}

Here we establish a few estimates of the number of vertices of type~Va$_n$ 
polygons and eventually prove Lemma~\ref{lem:th11-2}.

\begin{lemma}\label{lem11-3}
Suppose that $\m$ is a type Va$_n$ polygon, the frame 
$((n,n);-\mathbf{e}_2,-\mathbf{e}_1)$ splits the slope~$Q_2$ and forms small 
angle with it and either
\begin{equation}
\label{eqA-lem11-3}
\np\le n
\end{equation}
or
\begin{equation}
\label{eqB-lem11-3}
\np\ge n+1,\qquad \lv\le n.
\end{equation}
Then
\begin{equation}
\label{eq1-lem11-3}
N\le 2n+2.
\end{equation}
\end{lemma}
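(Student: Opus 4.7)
The plan is to follow the template of Lemma~\ref{lem8-2}: translate the hypotheses into Diophantine inequalities involving slope parameters, and argue by contradiction from $2N\ge 4n+6$.

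First I would collect the slope bounds.  The frame $((n,n);-\mathbf{e}_2,-\mathbf{e}_1)$ splits $Q_2$ and forms small angle with it, so Theorem~\ref{th3-6} supplies integers $s_2,t_2$ with $0\le s_2\le t_2$, $s_2\le n-\vp$, the strict inequality
\begin{equation*}
\v-n<t_2s_2-\frac{s_2^2-s_2}{2}+(n-\vp-s_2)(t_2+1),
\end{equation*}
and the slope-length bound
\begin{equation*}
2N_2\le(n-\vp)+(n-\pv)-t_2+s_2-\Ceil{\frac{\p-n}{2}}+1.
\end{equation*}
For $Q_1,Q_3,Q_4$ I would apply Proposition~\ref{pr:slp}, choosing bases so that the linear bounds read $2N_1\le\pn-\n+s_1$, $2N_3\le\vl-\l+s_3$, $2N_4\le\nl-\l+s_4$, with the accompanying quadratic constraints $\p-\np\ge s_1(s_1+1)/2$, $\v-\lv\ge s_3(s_3+1)/2$, $\ln-\n\ge s_4(s_4+1)/2$.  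Proposition~\ref{pr:th5-1} provides the $M_k$-bounds $\np-\nl\ge M_1$, $\pv-\pn\ge M_2$, $\vp-\vl\ge M_3$, $\lv-\ln\ge M_4$.

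Next, I would invoke $\m\subset\Delta_n$, which gives $\l,\n\ge 0$ together with $\xi+\eta\le 2n$ for each vertex $(\xi,\eta)$; in particular $\pn\le 2n-\p$, $\v\le 2n-\vp$, $\nl\le 2n-\n$, and $\np\le 2n-\n$.  Summing the four slope bounds and the $M_k$-bounds via Proposition~\ref{pr:boundary}, assuming $2N\ge 4n+6$, and feeding the Va constraints into the coordinate combinations on the right-hand side yields a master inequality relating $\p,\vp,\n$ and the slope parameters.  In parallel, following Lemma~\ref{lem8-2}, I would estimate $\v-\n$ from below through the chain $\v-\n=(\v-\lv)+(\lv-\ln)+(\ln-\n)$, giving
\begin{equation*}
\v-\n\ge\frac{s_3(s_3+1)}{2}+M_4+\frac{s_4(s_4+1)}{2},
\end{equation*}
and from above via the Theorem~\ref{th3-6} strict inequality for $Q_2$ combined with $\n\ge 0$.

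Finally, the two-way case hypothesis enters to compensate for the absence of a splitting frame for $Q_1$ and $Q_3$.  In case (A), $\np\le n$ combined with $\nl\le\np$ and $\l\ge 0$ yields $\nl-\l\le n$, tightening $2N_4\le n+s_4$ and supplying the missing control on $Q_4$.  In case (B), $\np\ge n+1$ together with $\np+\n\le 2n$ forces $\n\le n-1$, which both improves the upper bound on $\v-\n$ and bounds the master inequality, while the auxiliary hypothesis $\lv\le n$ gives $\v-\lv\ge\v-n$, supplying control on $Q_3$.  In either case, combining the master inequality with the double bound on $\v-\n$ should produce the required contradiction.  The main obstacle is the delicate arithmetic that reconciles the quadratic-in-$s_k$ slope constraints with the linear-in-$s_k$ $N_k$-bounds while threading the diagonal constraint $x_1+x_2\le 2n$ of $\Delta_n$, which, unlike the axis-aligned slab for type~III, couples horizontal and vertical extents; the dichotomy (A)/(B) is precisely the device that substitutes for the missing splitting frames.
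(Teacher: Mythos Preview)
Your framework—contradict $2N\ge4n+6$ by summing slope bounds—matches the paper, but the execution plan diverges in ways that leave a real gap. The paper does \emph{not} run the Lemma~\ref{lem8-2} template of bounding $\v-\n$ from two sides. For $Q_2$ it invokes only Corollary~\ref{cor3-6} (no $t_2$, no strict inequality); the contradiction comes instead from the diagonal constraint $\vp+\v\le2n$ of $\Delta_n$ (and, in case~\eqref{eqB-lem11-3}, also $\np\le2n$), confronted with lower bounds on $\vp$, $\v$, and $\np$ assembled from the quadratic slope data. Crucially, the paper applies Proposition~\ref{pr:slp} to $Q_4$ with respect to \emph{both} bases $(\mathbf e_1,\mathbf e_2)$ and $(\mathbf e_2,\mathbf e_1)$, producing two parameters $s$ and $s'$: case~\eqref{eqA-lem11-3} uses $s$, case~\eqref{eqB-lem11-3} uses $s'$. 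Your plan carries only the $s$-version. Without $s'$ (i.e.\ $2N_4\le\ln-\n+s'$ together with $\nl-\l\ge s'(s'+1)/2$), case~\eqref{eqB-lem11-3} does not close: that quadratic lower bound on $\nl-\l$ is precisely what converts $\np\le2n$ into one of the two mutually incompatible inequalities relating $s_3$ and $s'$.

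Your reading of how the dichotomy enters is also off. In case~\eqref{eqA-lem11-3}, $\np\le n$ is not used to bound $\nl-\l$ (that would not be a tightening at all); it guarantees $(n-\np)/2\ge0$, so that the ceiling $\lceil(\p-n)/2\rceil$, via $\p\ge\np+s_1(s_1+1)/2$, absorbs $s_1$ completely and yields $2N_1+2N_2+2M_2\le3n-\vp-\np+2$. In case~\eqref{eqB-lem11-3}, $\lv\le n$ is not used to rewrite $\v-\lv$; it is compared against a \emph{lower} bound on $\lv$ falling out of the summed inequality, giving $n\le s_3+s'-5+\sum M_k$. Combined with $\np\le2n$ and $\vp+\v\le2n$ this yields the pair $s_3\ge\tfrac14(s'^2-3s'+6)$ and $s'\ge\tfrac12(s_3^2+s_3+4)$, which cannot hold simultaneously.
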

\begin{proof}
As $Q_1$ is a slope with respect to the basis $(\mathbf{e}_2,-\mathbf{e}_1)$, 
by Proposition~\ref{pr:slp} there exists an integer $s_1$ such that
\begin{gather}
2N_1\le \pn-\n+s_1,
\label{eq2-lem11-3}
\\
\p-\np\ge\frac12 s_1(s_1+1),
\label{eq3-lem11-3}
\\
0\le s_1\le N_1.
\label{eq4-lem11-3}
\end{gather}
The same proposition applied to $Q_3$ and $(\mathbf e_1,-\mathbf e_2)$ ensures 
the existence of an integer~$s_3$ such that
\begin{gather}
2N_3\le\vl-\l+s_3,
\label{eq6-lem11-3}
\\
\v-\lv\ge\frac12 s_3(s_3+1),
\label{eq7-lem11-3}
\\
0\le s_3\le N_3.
\label{eq8-lem11-3}
\end{gather}

As $Q_4$ is a slope with respect to the bases $(\mathbf{e}_1,\mathbf{e}_2)$ 
and $(\mathbf{e}_2,\mathbf{e}_1)$, by the same proposition there exist 
integers $s$ and $s'$ such that
\begin{gather}
2N_4\le \nl-\l+s,
\label{eq9-lem11-3}
\\
\ln-\n\ge\frac12s(s+1),
\label{eq10-lem11-3}
\\
0\le s\le N_4,
\label{eq11-lem11-3}
\\
2N_4\le\ln-\n+s',
\label{eq12-lem11-3}
\\
\nl-\l\ge\frac12s'(s'+1),
\label{eq13-lem11-3}
\\
0\le s'\le N_4.
\label{eq14-lem11-3}
\end{gather}

The frame $((n,n);-\mathbf{e}_2,-\mathbf{e}_1)$ forms small angle with~$Q_2$, 
so by Corollary~\ref{cor3-6}
\begin{equation}\label{eq5-lem11-3}
2N_2\le 2n-\vp-\pv-
\Ceil{\frac{\p-n}{2}} + 1.
\end{equation}

By Proposition~\ref{pr:th5-1},
\begin{gather}
\np-\nl\ge M_1,
\label{eq15-lem11-3}
\\
\pv-\pn\ge M_2,
\label{eq16-lem11-3}
\\
\vp-\vl\ge M_3,
\label{eq17-lem11-3}
\\
\lv-\ln\ge M_4.
\label{eq18-lem11-3}
\end{gather}
Moreover, as the points of $\m$ satisfy~\eqref{eq*-11}, we have
\begin{gather}
\l\ge0,
\label{eq19-lem11-3}
\\
\n\ge0.
\label{eq20-lem11-3}
\end{gather}

Assume that~\eqref{eq1-lem11-3} does not hold.  Then
\begin{equation}\label{eq1'-lem11-3}
2N\ge 4n+6.
\end{equation}

First, assume that~\eqref{eqA-lem11-3} holds.

Let us estimate $2N$ from above.  First, estimate the sum $2N_1+2N_2+2M_2$.  
Using \eqref{eq2-lem11-3}, \eqref{eq5-lem11-3}, \eqref{eq16-lem11-3}, and 
\eqref{eq20-lem11-3}, we have
\begin{equation}
\label{eq201-lem11-3}
2N_1+2N_2+2M_2
\le
2n+s_1-\vp+M_2-\Ceil{\frac{\p-n}{2}} + 1.
\end{equation}
Estimating the ceiling by means of \eqref{eq3-lem11-3}, we obtain:
\begin{equation}
\label{eq202-lem11-3}
\Ceil{\frac{\p-n}{2}}
\ge
-n+\np+s_1+
\Ceil{\frac{n-\np}{2}+\frac14(s_1^2-3s_1)}
.
\end{equation}
It follows from~\eqref{eqA-lem11-3} that $(n-\np)/2\ge0$, and because~$s_1$ is 
an integer, we have
$1/4(s_1^2-3s_1)\ge-1/2$, so we get
\begin{equation*}
\Ceil{\frac{n-\np}{2}+\frac14(s_1^2-3s_1)}
\ge \Ceil{- \frac12} = 0
.
\end{equation*}
Combining this with~\eqref{eq202-lem11-3}, we get
\begin{equation*}
\Ceil{\frac{\p-n}{2}}
\ge
-n+\np+s_1
,
\end{equation*}
and further combining this with~\eqref{eq201-lem11-3} and the inequality $M_2 
\le 1$, we obtain
\begin{equation*}
2N_1+2N_2+2M_2
=3n-\vp-\np+M_2+1\le3n-\vp-\np+2.
\end{equation*}
By means of the last estimate and \eqref{eq6-lem11-3},
\eqref{eq9-lem11-3}, \eqref{eq15-lem11-3}, \eqref{eq17-lem11-3}, and 
\eqref{eq19-lem11-3}, we obtain
\begin{multline*}
2N=(2N_1+2N_2+2M_2)+2N_3+2N_4+2M_1+2M_3+2M_4
\\
\le(3n-\vp-\np+2)+(\vl-\l+s_3)+(\nl-\l+s)+2M_1+2M_3+2M_4
\\
\le3n+2+s_3+s+M_1+M_3+2M_4\le3n+3+s_3+s+M_3+2M_4.
\end{multline*}
Comparing this estimate with \eqref{eq1'-lem11-3}, we get
$$3n+3+s_3+s+M_3+2M_4\ge4n+6,$$
whence
\begin{equation}
\label{eq21-lem11-3}
n\le s_3+s+M_3+2M_4-3.
\end{equation}

As $M\subset\Delta_n$, we have
$$\vp+\v\le2n.$$
Let us estimate the terms on the left-hand side.  Using \eqref{eq19-lem11-3},
\eqref{eq6-lem11-3}, \eqref{eq8-lem11-3}, and \eqref{eq17-lem11-3}, we get
$$\vp=\l+(\vl-\l)+(\vp-\vl)\ge2N_3-s_3+M_3\ge
s_3+M_3.$$
Using \eqref{eq20-lem11-3}, \eqref{eq10-lem11-3}, \eqref{eq18-lem11-3}, and 
\eqref{eq7-lem11-3}, we obtain
$$\v=\n+(\ln-\n)+(\lv-\ln)+(\v-\lv)\ge\frac12s(s+1)+M_4+\frac12s_3(s_3+1).$$
Thus,
$$(s_3+M_3)+\left(\frac12s(s+1)+M_4+\frac12s_3(s_3+1)\right)\le2n,$$
or, equivalently,
\begin{equation}
\label{eq22-lem11-3}
\frac12(s^2_3+3s_3)+\frac12(s^2+s)+M_3+M_4\le2n.
\end{equation}

Now we use \eqref{eq21-lem11-3} to estimate $n$ on the right-hand side of 
\eqref{eq22-lem11-3}:
$$\frac12(s_3^2+3s_3)+\frac12(s^2+s)+M_3+M_4\le2(s_3+s+M_3+3M_4-3).$$
Hence
$$\frac12(s_3^2-s_3)+\frac12(s^2-3s)\le M_3+3M_4-6\le-2,$$
so
$$s^2_3-s_3+s^2-3s\le-4.$$
Completing the squares, we obtain a contradiction:
$$\left(s_3-\frac12\right)^2+\left(s-\frac32\right)^2\le-\frac32,$$
Thus, we have proved \eqref{eq1-lem11-3} provided that \eqref{eqA-lem11-3} 
holds.

Now assume that~\eqref{eqB-lem11-3} holds.

Let us estimate $2N$ from above starting with the sum $2N_1+2N_2+2M_1
+2M_2$.  Using \eqref{eq2-lem11-3}, \eqref{eq5-lem11-3}, \eqref{eq16-lem11-3}, 
and \eqref{eq3-lem11-3}, we obtain
\begin{multline*}
2N_1+2N_2+2M_1+2M_2
\\
\le(\pn-\n+s_1)+
\left(2n-\vp-\pv-\Ceil{\frac{\p-n}{2}} + 1\right)
+2M_1+2M_2
\\
\le
2n-\vp-\n+2M_1+M_2 + 1 -\Ceil{\frac{\np-n}{2} + \frac{s_1^2 - 3s_1}{4}}
\end{multline*}
Estimating $(s_1^2 - 3s_1)/4 \ge - 1/2$, we get
\begin{multline*}
2N_1+2N_2+2M_1+2M_2
\\
\le
2n-\vp-\n+2M_1+M_2 + 1 -\Ceil{\frac{\np-n - 1}{2}}
\\
=
2n-\vp-\n+2M_1+M_2 + 1 -\Floor{\frac{\np-n}{2}}
.
\end{multline*}
Write the estimate in the form
\begin{multline}
\label{eq203-lem11-3}
2N_1+2N_2+2M_1+2M_2
\\
\le
2n-\vp+M_1+M_2 + 1 -
\left(
\Floor{\frac{\np-n}{2}}
+ \n - M_1
\right)
.
\end{multline}
Let us show that
\begin{equation}
\label{eq204-lem11-3}
\Floor{\frac{\np-n}{2}}
+ \n - M_1
\ge 0
.
\end{equation}

Assume that $M_1=1$.  According to~\eqref{eqB-lem11-3}, we have either $\np\ge 
n+2$ or $\np=n+1$.  In the former case we use \eqref{eq20-lem11-3} and 
obtain~\eqref{eq204-lem11-3}.  In the latter case by \eqref{eq15-lem11-3} we 
have $\nl\le\np-M_1=n$, so the edge $[(\nl,\n),(\np,\n)]$ of~$\m$ contains the 
point $(n,\n)$.  Thus, we cannot have $\n=0$, since $\m$ is free of points 
of~$\nz$.  Thus, we must have $\n\ge1$, and \eqref{eq204-lem11-3} follows.

If $M_1=0$, inequality~\eqref{eq204-lem11-3} follows from \eqref{eqB-lem11-3} 
and \eqref{eq20-lem11-3}.

Thus, we have proved~\eqref{eq204-lem11-3} for all possible cases.  Combining 
it with~\eqref{eq203-lem11-3}, we obtain
$$2N_1+2N_2+2M_1+2M_2\le2n-\vp+M_1+M_2+1.$$

Now estimate $2N$ using the last inequality and \eqref{eq6-lem11-3},
\eqref{eq12-lem11-3}, \eqref{eq17-lem11-3}, \eqref{eq18-lem11-3}, 
\eqref{eq19-lem11-3}, and \eqref{eq20-lem11-3}:
\begin{multline*}
2N=(2N_1+2N_2+2M_1+2M_2)+2N_3+2N_4+2M_3+2M_4
\\
\le(2n-\vp+M_1+M_2+1)+(\vl-\l+s_3)
\\
+(\ln-\n+s')+2M_3+2M_4
\\
\le
2n+1+s_3+s'+(M_3-(\vp-\vl))
\\
+(\ln+M_4)+M_1+M_2+M_3+M_4
\\
\le 2n+1+s_3+s'+\lv+M_1+M_2+M_3+M_4.
\end{multline*}
Comparing this estimate with \eqref{eq1'-lem11-3} we obtain
\begin{equation}
\label{eq23-lem11-3}
\lv\ge-s_3-s'+2n+5-M_1-M_2-M_3-M_4.
\end{equation}
Together with \eqref{eqB-lem11-3} this implies
\begin{equation}
\label{eq24-lem11-3}
n\le s_3+s'-5+M_1+M_2+M_3+M_4.
\end{equation}

The triangle $\Delta_n$ lies in the half-plane $x_1\le2n$, so we have
\begin{equation}
\label{eq205-lem11-3}
\np\le2n.
\end{equation}
We can estimate the left-hand side by means of~\eqref{eq19-lem11-3}, 
\eqref{eq13-lem11-3}, and~\eqref{eq15-lem11-3} as follows:
$$\np=\l+(\nl-\l)+(\np-\nl)\ge\frac12s'(s'+1)+M_1.$$
Using this estimate and~\eqref{eq24-lem11-3}, we obtain 
from~\eqref{eq205-lem11-3}:
$$\frac12s'(s'+1)+M_1\le2s_3+2s_4-10+2M_1+2M_2+2M_3+2M_4,$$
whence
$$2s_3\ge\frac12(s'^2-3s')+10-M_1-2M_2-2M_3-2M_4\ge$$
$$\ge\frac12(s'^2-3s')+3=\frac12(s'^2-3s'+6),$$
and finally
\begin{equation}\label{eq25-lem11-3}
s_3\ge\frac14(s'^2-3s'+6).
\end{equation}

The vertices of $\m$ solve \eqref{eq*-11}, so
\begin{equation}
\label{eq206-lem11-3}
\vp+\v\le2n.
\end{equation}
Using \eqref{eq19-lem11-3}, \eqref{eq6-lem11-3}, \eqref{eq8-lem11-3}, 
and~(\ref{eq17-lem11-3}), we deduce
$$\vp=\l+(\vl-\l)+(\vp-\vl)\ge2N_3-s_3+M_3\ge s_3+M_3,$$
while by virtue of~\eqref{eq23-lem11-3} and \eqref{eq7-lem11-3} we obtain
$$\v=\lv+(\v-\lv)\ge(-s_3-s'+2n+5-M_1-M_2-M_3-M_4)+\frac12s_3(s_3+1).$$
Combining~\eqref{eq206-lem11-3} with two last estimates, we get
$$s'\ge\frac12s_3(s_3+1)+5-M_1-M_2-M_4\ge\frac12s_3(s_3+1)+2=\frac12(s^2_3+s_3+4),$$
and finally
\begin{equation}\label{eq26-lem11-3}
s'\ge\frac12(s^2_3+s_3+4).
\end{equation}

It is not hard to check that the inequalities \eqref{eq25-lem11-3} 
and~\eqref{eq26-lem11-3} are incompatible.  This contradiction proves 
\eqref{eq1-lem11-3} in case~\eqref{eqB-lem11-3} holds.
\end{proof}

\begin{lemma}\label{lem11-4}
Suppose that the vertices of a type~Va$_n$ $N$-gon $\m$ belong to a 
$(1,n)$-lattice $\Gamma$; then
\begin{equation}\label{eq1-lem11-4}
N\le2n-2.
\end{equation}
\end{lemma}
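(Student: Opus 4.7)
The plan is to adapt the strategy of Lemma~\ref{lem8-4} and Lemma~\ref{lem9-5}, which establish the analogous bound $N\le 2n-2$ for type~III and~IV polygons whose vertices belong to a $(1,n)$-lattice. One translates the geometric and lattice-theoretic constraints into a system of Diophantine inequalities relating $N_1,\ldots,N_4$, $M_1,\ldots,M_4$, and the bounding-box parameters $\l$, $\n$, $\vp$, $\np$, $\p$, $\pn$, $\pv$, $\v$, sums them up via Proposition~\ref{pr:boundary}, and arrives at a contradiction with the assumption $N\ge 2n-1$.

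First, I would apply Proposition~\ref{pr:slp} to each of the four maximal slopes $Q_1,\ldots,Q_4$ of $\m$. When the small step of $\Gamma$ in the relevant direction exceeds~$1$, the refined bound~\eqref{eq:slp4} applies; when $\Gamma$ admits a basis of the form $(\mathbf f_1+a\mathbf f_2, n\mathbf f_2)$ with $1\le a\le n-1$ adapted to the slope (obtained from the given basis of $\Gamma$ through Propositions~\ref{pr:th1-1} and~\ref{pr:th1-12}), the refined quadratic bound~\eqref{eq:slp5} applies. The $M_k$ contributions are controlled by Proposition~\ref{pr:th5-1}: since $\Gamma$ has invariant factors $(1,n)$ with $n\ge 3$, at least one of the large steps is $\ge 2$, and the large step in the direction matching the $n$-factor equals $n$ itself. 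The hypothesis $\m\subset\Delta_n$ provides the crucial linear inequalities $\l\ge 0$, $\n\ge 0$, $\vp+\v\le 2n$, $\np+\n\le 2n$, $\p+\pn\le 2n$, $\p+\pv\le 2n$, and the fact that $\m$ avoids the $\nz$-points $\mathbf 0$, $(n,0)$, $(2n,0)$, $(0,n)$, $(n,n)$, $(0,2n)$ lying in $\Delta_n$ tightens these at degenerate configurations of the bounding box.

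A case analysis on the small $\mathbf e_i$-steps of $\Gamma$ then becomes inevitable. If both small steps exceed~$1$, then by Proposition~\ref{pr:th1-11} both large steps are $\ge 2$, and~\eqref{eq:slp4} together with Proposition~\ref{pr:th5-1} (as in Lemma~\ref{lem9-4}) finishes the case after summing. Otherwise, by the symmetry $(x_1,x_2)\mapsto(x_2,x_1)$ of $\Delta_n$, we may assume the small $\mathbf e_1$-step equals~$1$, so Proposition~\ref{pr:th1-12} yields a basis $(\mathbf e_1+a\mathbf e_2, n\mathbf e_2)$ with $0\le a\le n-1$. For $a\ne 0$ I would proceed as in Lemma~\ref{lem8-4}, applying~\eqref{eq:slp5} to the slopes $Q_3$ and $Q_4$ with the bases $(\mathbf e_1-a(-\mathbf e_2),\,n(-\mathbf e_2))$ and $(\mathbf e_1-(n-a)\mathbf e_2,\,n\mathbf e_2)$ of $\Gamma$, and combining the resulting quadratic inequalities with the triangular envelope constraints. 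For $a=0$, i.e.\ $\Gamma=\z\times n\z$, the lattice meets $\Delta_n$ only on the three lines $x_2=0,n,2n$; after removing the forbidden $\nz$-points only a handful of candidate vertices remain, and a direct enumeration yields a bound comfortably below $2n-2$.

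The principal obstacle, as in Lemma~\ref{lem8-4}, is the algebraic bookkeeping needed to combine four slope estimates (each introducing a parameter $s_k$ with an accompanying quadratic lower bound on the orthogonal extent) with four $M_k$ inequalities and the triangular envelope constraint $\vp+\v\le 2n$ into a single contradictory inequality. The delicate step will be reconciling the parameter~$a$ appearing in the bounds for $Q_3$ and $Q_4$ with the symmetric constraint $\vp+\v\le 2n$: the sum of the $a$ and $n-a$ contributions from the two slopes must be shown to yield a quadratic expression in $s_3,s_4$ that is forced negative under the hypothesis $N\ge 2n-1$ together with $n\ge 3$, much as in the final step of the proof of Lemma~\ref{lem8-4}.
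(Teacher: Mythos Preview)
Your plan has a genuine gap. Lemmas~\ref{lem8-4} and~\ref{lem9-4}/\ref{lem9-5} do not rely on Proposition~\ref{pr:slp} alone: each of them applies Theorem~\ref{th3-8} to two of the four maximal slopes, exploiting the splitting frames that the type~III or type~IV hypotheses supply. Those applications produce the extra ``$-1$'' terms (e.g.\ \eqref{eq3-lem8-4}--\eqref{eq4-lem8-4} or \eqref{eq:t4-lem2-3}--\eqref{eq:t4-lem2-4}) that are essential for reaching $2n-2$ rather than $2n$. A type~Va$_n$ polygon comes with no splitting segments at all---only the containment $\m\subset\Delta_n$---so Theorem~\ref{th3-8} is unavailable, and your proposal does not say what replaces it. Concretely, in your ``both small steps exceed $1$'' case, summing the four bounds from~\eqref{eq:slp4} and invoking Proposition~\ref{pr:th5-1} together with $\l\ge 0$ and $\p\le 2n-2$ yields only $2N\le 2\p+2M_2+2M_4\le 4n$, i.e.\ $N\le 2n$. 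The triangular constraints $\vp+\v\le 2n$, $\p+\pn\le 2n$ do not obviously recover the missing $-2$; in Lemma~\ref{lem8-4} the analogous upper bound~\eqref{eq19-lem8-4} on $\v-\n$ is of order $n^2$ and is balanced against the quadratic lower bounds from~\eqref{eq:slp5}, but here you would need a bound of order $n$, and the parameters $s_3,s_4$ in~\eqref{eq:slp5} are then forced to be $O(1)$, which collapses the argument. The same issue arises in your $a\ne 0$ case.

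The paper takes an entirely different, and much shorter, route that avoids the slope machinery completely. If the small $\mathbf e_1$-step $s$ of $\Gamma$ is at least~$2$, one simply observes that all $\Gamma$-points in $\Delta_n$ lie on the $2n/s+1$ vertical lines $x_1=js$, counts at most two vertices per line, and handles the borderline case $s=2$ by noting that on the lines $x_1=0$ and $x_1=n$ only one $\Gamma$-point lies strictly between consecutive $n\zz$-points. If $s=1$, one chooses a basis $(\mathbf e_1-a\mathbf e_2,\,n\mathbf e_2)$ of~$\Gamma$ with $a$ in a symmetric range about~$0$, applies the (non-integral) linear map $A=\begin{pmatrix}1&0\\ a/n&1/n\end{pmatrix}$ sending $\Gamma$ to $\zz$, and counts integer points in the image triangle $A\Delta_n$, whose vertical extent is at most~$n$; the vertices of $A\m$ then lie on at most $n-1$ horizontal lines after the extreme lines are excluded. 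A short residual case for odd $n$ and $a=(1-n)/2$ is handled by direct inspection.
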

\begin{proof}
Note that by Proposition~\ref{pr:th1-11} the product of the small 
$\mathbf{e}_1$-step and the large $\mathbf{e}_2$-step of~$\Gamma$ equals~$n$.

First, assume that $\Gamma$ has small $\mathbf{e}_1$-step $s\ge2$.  In this 
case $s$ divides $n$ and all the points of $\Gamma$ belonging to~$\Delta_n$ 
lie on the lines
$$x_1=js\qquad(j=0,\dots,2n/s).$$
Consequently, the vertices of $\m$ lie on the same lines as well. Each of the 
$2n/s$ lines corresponding to $j=0,\dots,2n/s-1$ contains at most two vertices 
while the line $x_1=2n$ corresponding to $j=2n/s$ does not contain any vertex, 
since its only common point with $\Delta_n$ is $(2n,0)\in\nz$.  Thus, if 
$s\ge3$, we can estimate the number of vertices of $\m$ as follows:
$$N\le2\cdot\frac{2n}{s}\le\frac{4n}{3}\le2n-2$$
(since $n\ge3$).  If $s=2$, the large $\mathbf{e}_2$-step of~$\Gamma$ is 
$n/2$, so each of the lines the lines $x_1=0$ and $x_1=n$ has a single point 
of $\Gamma$ between adjacent points of $\nz$.  Consequently, each of these 
lines (corresponding to $j=0$ and $j=n/2$) contains at most one vertex of $\m$ 
and there are $n-2$ other lines containing up to two vertices.  Thus, the 
total number of vertices does not exceed
$$N\le2+2\cdot(n-2)=2n-2.$$
We have thus proved the lemma under the hypothesis that the small 
$\mathbf{e}_1$-step of~$\Gamma$ is greater then~1.

Assume that $\Gamma$ has small $\mathbf{e}_1$-step~1.  Then by 
Proposition~\ref{pr:th1-12} the lattice $\Gamma$ admits a basis of the form 
$(\mathbf{e}_1-a\mathbf{e}_2, n\mathbf{e}_2)$, where $\floor{n/2}-n+1\le 
a\le\floor{n/2}$.  Consider the linear transformation
$$
A=
\begin{pmatrix}
1 & 0 \\
a/n & 1/n
\end{pmatrix}
.$$
It is easily seen that $A\Gamma=\zz$ and $\Lambda=A(\nz)$ is a lattice having 
the basis $(n\mathbf{e}_1, \mathbf{e}_2)$; the image~$\tm$ of $\m$ is an 
integer $N$-gon free from points of $\Lambda$ and contained in the triangle 
$\widehat{\Delta}=A\Delta_n$ with the vertices $\mathbf 0$, $(0, 2)$, and 
$(2n, 2a)$.

Set
\begin{align}
p & =\min\{x_2\colon(x_1,x_2)\in\widehat{\Delta}\} = \min\{2a,0\}
\label{eq2-lem11-4}
,
\\
q & =\max\{x_2\colon(x_1,x_2)\in\widehat{\Delta}\} = \max\{2a,2\}
\label{eq2a-lem11-4}
.
\end{align}

Note two obvious facts.  Firstly, all the integer points of the 
triangle~$\widehat{\Delta}$ lie on the $q-p+1$ lines
\begin{equation}\label{eq3-lem11-4}
x_2=j\qquad(j=p,p+1,\ldots,q),
\end{equation}
so \emph{all the vertices of~$\tm$ lie on the lines \eqref{eq3-lem11-4}, each 
line containing at most two vertices}.  Secondly, \emph{if $a\ne 0$, there are 
no vertices of $\tm$ on the line $x_2=p$}, as it is easy to check that in this 
case the line has the only common point with $\widehat{\Delta}$---the vertex 
of the triangle, which belongs to~$\Lambda$.  By the same argument, \emph{if 
$a\ne 1$, there are no vertices of $\tm$ on the line $x_2=q$}.

As a consequence, we see that \eqref{eq1-lem11-4} holds, provided that
\begin{equation}\label{eq4-lem11-4}
q-p\le n.
\end{equation}
Indeed, if additionally $a\ne 0$ and $a\ne 1$, then the vertices of~$\tm$ lie 
on the $ q-p-1 $ lines \eqref{eq3-lem11-4} corresponding to $j=p+1,\dots,q-1$, 
whence
$$N\le2(q-p-1)\le2n-2.$$
On the other hand, if $a=0$ or $a=1$, then according to \eqref{eq2-lem11-4} 
and~\eqref{eq2a-lem11-4} we have $p=0$, $q=2$, and the vertices of~$\tm$ lie 
on the lines \eqref{eq3-lem11-4} (excluding $j=p$ and $j=q$), whence
$$N\le4\le2n-2$$
as $n \ge 3$.

According to \eqref{eq2-lem11-4} and~\eqref{eq2a-lem11-4}, we have
$$q-p=
\begin{cases}
2a,& \text{if } a\ge1,
\\
-2a+2,&\text{if } a\le0.
\end{cases}
$$
Using this formula, we see that if $1\le
a\le\frac{n}{2}$, then
$$q-p\le2\cdot\frac{n}{2}=n,$$
and if $n/2-n+1\le a\le0$, then
$$q-p\le-2(n/2-n+1)+2=n.$$
Thus, inequality \eqref{eq4-lem11-4} holds provided that $n/2-n+1\le
a\le n/2$, so for these~$a$ inequality~\eqref{eq1-lem11-4} is proved.  Taking 
into account the range of possible values of $a$, we see that it only remains 
to check the case $a=\floor{n/2}-n+1$ when $\floor{n/2}<n/2$, i.~e.\ 
$a=(-n+1)/2$ for an odd~$n$.

In this case the vertices of $\widehat{\Delta}$ are the points $\mathbf 0$, 
$(0, 2)$, and $(2n, -n + 1)$, and this triangle is the solution set of the 
system
\begin{equation}
\label{eq5-lem11-4}
\left\{
\begin{array}{l}
x_1\ge0,\\
x_1\le-\frac{2n}{n+1}(x_2-2),\\
x_1\ge-\frac{2n}{n-1}x_2.
\end{array}
\right.
\end{equation}
We will show that if $n\ge5$, each of the lines $x_2=1$ and $x_2=-n+2$ 
contains at most one vertex of~$\tm$.

It follows from system~\eqref{eq5-lem11-4} that points of~$\widehat{\Delta}$ 
lying on the line $x_2=1$ satisfy
$$0\le x_1\le\frac{2n}{n+1}.$$
As $2n/(n+1)<2$ and all the integer points of the line $x_1=0$ belong to 
$\Lambda$, we see that the line $x_2 = 1$ has only one point that could be a 
vertex of~$\tm$: the point~$(1,1)$.

It also follows from~\eqref{eq5-lem11-4} that the points of~$\widehat{\Delta}$ 
lying on the line $x_2=-n+2$ satisfy
$$\frac{2n(n-2)}{n-1}\le x_1\le\frac{2n^2}{n+1}.$$
Given that $n\ge5$, we have
$$2n-3<\frac{2n(n-2)}{n-1}<\frac{2n^2}{n+1}<2n-1,$$
so $(2n-2,-n+2)$ is the only point of the line $x_2=-n+2$ that could be a 
vertex of~$\tm$.

As in our case $p = -n+1$ and $q = 2$, we see that each of the $n-2$ 
lines~\eqref{eq3-lem11-4} corresponding to $j=p+2,\dots,q-2$ contains at most 
two vertices of $\tm$; each of the lines corresponding to $j=p+1$ and $j=q-1$ 
contains at most one vertex; finally, as $a\neq0$ and $a\neq1$, the lines 
corresponding to $j=p$ and $j=q$ contain no vertices.  This amounts to a total 
of at most $2n - 2$ vertices, so \eqref{eq1-lem11-4} is proved.

It only remains to check the case $n=3$.  Then the vertices of the 
triangle~$\widehat{\Delta}$ are the points $\mathbf{0}$, $(0,2)$, and 
$(6,-2)$.  It is easy to check that $\widehat \Delta$ contains only 4 integer 
points not belonging to $\Lambda$, so the inequality $N \le 4 = 2n - 2$ is 
trivial.
\end{proof}

\begin{definition}
We call an integer polygon \emph{minimal} if it does not contain other integer 
polygon with the same number of vertices.
\end{definition}

We note two simple properties of minimal polygons.

\begin{proposition}
\label{pr:min1}
Any edge of a minimal polygon contains precisely two interger points---its 
endpoints.
\end{proposition}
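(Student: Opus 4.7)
I would prove this by contradiction. Suppose some edge $[\mathbf a, \mathbf b]$ of a minimal integer $N$-gon $\m$ contains an integer point $\mathbf c$ strictly between $\mathbf a$ and $\mathbf b$, and let $\mathbf b'$ be the vertex of $\m$ adjacent to $\mathbf b$ other than $\mathbf a$. The plan is to exhibit the smaller convex integer polygon
\[
\m' = \co\bigl((\text{vertices of }\m \setminus \{\mathbf b\})\cup\{\mathbf c\}\bigr),
\]
obtained from $\m$ by replacing the vertex $\mathbf b$ with the collinear integer point $\mathbf c$, and to verify that $\m'$ is an $N$-gon strictly contained in $\m$, contradicting minimality.

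Strict inclusion $\m' \subsetneq \m$ is easy: all the generators of $\m'$ lie in $\m$, while $\mathbf b$, being an extreme point of $\m$, does not belong to $\m'$ (as a short calculation using $\mathbf c = \mu\mathbf a + (1-\mu)\mathbf b$ shows). The substantive step is checking that $\m'$ has precisely $N$ vertices. Every vertex of $\m$ other than $\mathbf a$, $\mathbf b$, and $\mathbf b'$ automatically remains an extreme point of the smaller set $\m'$; the vertex $\mathbf a$ keeps its local structure since $\mathbf c$ lies on the line through the old edge $[\mathbf a, \mathbf b]$; and $\mathbf c$ itself is a vertex of $\m'$ because otherwise $\mathbf a, \mathbf c, \mathbf b'$ would be collinear, placing $\mathbf b'$ on the line through the edge $[\mathbf a, \mathbf b]$, which contradicts the convexity of $\m$.

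The remaining, and main, obstacle is to show that $\mathbf b'$ is still a vertex of $\m'$, i.e.\ to rule out collinearity of $\mathbf c, \mathbf b', \mathbf b''$, where $\mathbf b''$ is the vertex of $\m$ following $\mathbf b'$. I plan to settle this by the following angular argument. Since $\mathbf c$ lies strictly between $\mathbf a$ and $\mathbf b$, the vector
\[
\mathbf b' - \mathbf c = (\mathbf b' - \mathbf b) + t(\mathbf b - \mathbf a)
\quad \text{for some } t > 0
\]
is a strictly positive combination of the two consecutive edge vectors $\mathbf b - \mathbf a$ and $\mathbf b' - \mathbf b$ of $\m$. Hence its polar angle lies strictly between theirs, whereas convexity of $\m$ ensures that the polar angle of $\mathbf b'' - \mathbf b'$ strictly exceeds that of $\mathbf b' - \mathbf b$. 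In particular $\mathbf b' - \mathbf c$ and $\mathbf b'' - \mathbf b'$ have different polar angles, so $\mathbf c, \mathbf b', \mathbf b''$ are noncollinear. Thus $\m'$ is a convex integer $N$-gon with $\m' \subsetneq \m$, contradicting the minimality of $\m$.
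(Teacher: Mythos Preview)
Your proof is correct and follows the same approach as the paper's: replace one endpoint of the offending edge by the interior lattice point and observe that the resulting convex hull is a strictly smaller integer $N$-gon. Note, however, that what you call the ``main obstacle''---showing that $\mathbf b'$ remains a vertex of $\m'$---is immediate from the general fact that an extreme point of a convex set $\m$ lying in a convex subset $\m'\subset\m$ is automatically extreme in $\m'$; your angular argument, while correct, is unnecessary.
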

\begin{proof}
If $\mathbf{v}_1$,\dots,$\mathbf{v}_N$ are the vertices of an integer $N$-gon 
and its edge $[\mathbf{v}_1,\mathbf{v}_2]$ contains an integer point 
$\mathbf{v}$ different from $\mathbf{v}_1$ and $\mathbf{v}_2$, then it is 
easily seen that the convex hull of the points $\mathbf{v}$, $\mathbf{v}_2$, 
\dots,$\mathbf{v}_n$ is an integer $N$-gon contained in the original one and 
different from it.  This means that the original polygon is not minimal.
\end{proof}

\begin{proposition}
\label{pr:min2}
Affine automorphisms of the integer lattice map minimal polygons onto minimal 
polygons.
\end{proposition}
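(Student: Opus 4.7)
The plan is to argue by contradiction, using the fact that the inverse of an affine automorphism of $\zz$ is again an affine automorphism of $\zz$, together with the observation, recorded earlier in the paper, that such automorphisms map integer $N$-gons onto integer $N$-gons. Specifically, let $\varphi$ be an affine automorphism of $\zz$ and $\m$ be a minimal integer polygon, and assume for contradiction that $\varphi(\m)$ is not minimal. By the definition of a minimal polygon, there exists an integer polygon $\m'$ with the same number of vertices as $\varphi(\m)$ and such that $\m' \subsetneq \varphi(\m)$.

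Then I would apply $\varphi^{-1}$ to obtain $\varphi^{-1}(\m') \subsetneq \m$. Since $\varphi$ is a bijection of $\rr$, strict inclusion is preserved. Because $\varphi^{-1}$ is an affine automorphism of $\zz$, the image $\varphi^{-1}(\m')$ is again an integer polygon, and it has the same number of vertices as $\m'$ and hence as $\varphi(\m)$ and $\m$. This exhibits a proper integer subpolygon of $\m$ with the same number of vertices as $\m$, contradicting the minimality of $\m$ and completing the proof.

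There is no real obstacle here: the only facts invoked are the preservation of the number of vertices, of the property of being an integer polygon, and of strict inclusion under an affine automorphism of $\zz$, all of which are either stated in Section~\ref{sec:prelim:geom} or are immediate from the bijectivity of $\varphi$. The one point worth making explicit is that the inverse of the affine map $\mathbf x \mapsto A\mathbf x + \mathbf b$ with $A$ unimodular and $\mathbf b \in \zz$ is again of this form, so it is indeed an affine automorphism of $\zz$, which justifies the step in which we apply $\varphi^{-1}$ to an integer polygon and obtain an integer polygon.
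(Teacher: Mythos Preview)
Your proof is correct; the paper does not actually give a proof but simply declares the proposition obvious, and your argument spells out exactly the natural justification one would expect.
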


This proposition is obvious.

\begin{proof}[Proof of Lemma~\ref{lem:th11-2}]
Let us prove the inequality
\begin{equation}\label{eq11-5-1}
N\le2n+2.
\end{equation}
We can certainly assume that $\m$ is minimal, for if not, we replace $\m$ by a 
minimal polygon it contains (which is, of course, again a type~Va$_n$ 
polygon).

First, assume that $\m$ satisfies either
\begin{equation}\label{eq11-5-2}
\np\le n
\end{equation}
or
\begin{equation}\label{eq11-5-3}
\np\ge n+1,\quad \lv\le n.
\end{equation}

If $\m$ lies in a slab of the form
$$0\le x_1\le n,\quad n\le x_1\le 2n,\quad 0\le x_2\le n,\quad n\le x_2\le 2n,$$
it is a type~I$_n$ polygon, and the estimate \eqref{eq11-5-2} follows from 
Theorem~\ref{subtheorem-c} for type~I polygons.  Otherwise, $\m$ is split by 
the segments $[(n,0),(n,n)]$ and $[(0,n),(n,n)]$, which are the intersections 
of the lines $x_1=n$ and $x_2=n$ with~$\Delta_n$.  Therefore, by 
Proposition~\ref{pr:th5-3} the frame $((n,n);-\mathbf{e}_2,-\mathbf{e}_1)$ 
splits the slope~$Q_2$.  If this frame forms small angle with the slope, 
Lemma~\ref{lem11-3} provides~\eqref{eq11-5-1}.  If not, it follows from 
Proposition~\ref{pr:sa} that the frame $((n,n);-\mathbf{e}_1,-\mathbf{e}_2)$ 
forms small angle with $Q_2$.  Let $\m'$ be the reflection of $\m$ about the 
line $x_1 = x_2$.  It is not hard to check that 
$((n,n);-\mathbf{e}_2,-\mathbf{e}_1)$ forms small angle with $Q_2(\m')$; 
moreover, $\m'$ is a minimal type~Va$_n$ polygon, and since
$$\np(\m')=\lv(\m),\qquad \lv(\m')=\np(\m),$$
we see that $\m'$ satisfies \eqref{eq11-5-2} or \eqref{eq11-5-3}.  Applying 
the already proved part of the lemma to $\m'$, we obtain \eqref{eq11-5-1}.

Now suppose that $\m$ satisfies neither~\eqref{eq11-5-2}, 
nor~\eqref{eq11-5-3}.  Thus, in particular,
$$\np(\m)\ge n+1.$$
Consider the affine automorphism of $\nz$ given by
$$\varphi(x_1,x_2)=(-x_1-x_2+2n,x_2).$$
By Proposition~\ref{pr:min2}, the polygon $\varphi(\m)$ is minimal.  Moreover, 
it lies in the triangle $\Delta_n$, since $\Delta_n=\varphi(\Delta_n)$.  
Obviously, we have
$$\n(\varphi(\m))=\n(\m).$$
A straightforward computation gives
$$\nl(\varphi(\m))=-\np(\m)-\n(\m)+2n.$$
As the points of $\m$ satisfy system~\eqref{eq*-11}, we have $\n(\m)\ge0$, so
$$\nl(\varphi(\m))\le-\np(\m)+2n\le n-1.$$
By Proposition~\ref{pr:min1},
$$\np(\varphi(\m))\le\nl(\varphi(\m))+1,$$
so we have
$$\np(\varphi(\m))\le n.$$
Thus, the polygon~$\varphi(\m)$ satisfies~\eqref{eq11-5-2}, and applying the 
already proved part of the lemma to $\varphi(\m)$, we obtain~\eqref{eq11-5-1}.

The part of the lemma concerning polygons with vertices belonging to a 
$(1,n)$-lattice is given by Lemma~\ref{lem11-4}.
\end{proof}

\section{Type VI polygons}
\label{sec:vi}

It turns out that any type~VI polygon can be mapped onto a polygon of another 
type by an automorphism of~$\nz$.  The following lemma is the main result of 
this section.

\begin{lemma}
\label{lem:th12-1}
Suppose that~$\m$ is a type~VI$_n$ polygon; then there exists an affine 
automorphism~$\psi$ of~$\nz$ such that $\psi(\m)$ is a polygon of one of the 
types I$_n$, II$_n$, III$_n$, or~V$_n$.
\end{lemma}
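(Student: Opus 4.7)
The plan is to apply the lift transformation of Section~\ref{sec:lift} and then classify the resulting polygon $\tm$ according to how it meets the line $x_2=n$. Since a type~VI$_n$ polygon $\m$ is split by both $[\mathbf{0},(-n,0)]$ and $[\mathbf{0},(0,n)]$, the lift $\tm$ is well defined. By Lemma~\ref{lem11-02} together with the invariance of the lines $x_1=0,\pm n$ under the shear $A_{a_0}$, we know that $\tm$ is split by $[\mathbf{0},(-n,0)]$ and $[\mathbf{0},(0,n)]$, is not split by $[\mathbf{0},(-n,-n)]$, and lies in the slab $-n\le x_1\le n$.

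First I would ask whether the line $x_2=n$ splits $\tm$. If it does not, then $\tm$ satisfies every clause of Definition~\ref{def:types} for type~V$_n$ and we are done. Otherwise the intersection $\tm\cap\{x_2=n\}$ is a segment in $(-n,n)\times\{n\}$ that by $\nz$-freeness cannot contain the point $(0,n)$, and hence lies entirely in either $(-n,0)\times\{n\}$ or $(0,n)\times\{n\}$. In the first subcase $[(-n,n),(0,n)]$ splits $\tm$; applying the translation by $(n,0)$, one checks directly from Definition~\ref{def:types} that the resulting polygon is of type~III$_n$, since $[\mathbf{0},(n,0)]$, $[(n,0),(n,n)]$ and $[(0,n),(n,n)]$ all split it while the line $x_1=0$ does not.

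In the second subcase $[(0,n),(n,n)]$ splits $\tm$, so $\tm$ is itself of type~VI$_n$. When the lift is nontrivial, Lemma~\ref{lem11-03} yields $\n(\tm)>\n(\m)$; for any type~VI$_n$ polygon $\n$ is a negative integer (bounded above by $-1$), so iterating the construction produces a strictly increasing bounded sequence of integers which must terminate after finitely many steps in one of the previously handled cases.

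The hard part is the residual situation in which some iterate is a fixed point of the lift (equivalently, $[\mathbf{0},(-n,-n)]$ does not split the polygon) while still being type~VI$_n$. I plan to dispose of this case by composing the lift with an additional affine automorphism of $\nz$---most naturally a horizontal shear $(x_1,x_2)\mapsto(x_1-ax_2,x_2)$ or a reflection about the line $x_1=x_2$ followed by a translation---so as to move the fixed-point obstruction into a different coordinate direction, after which the main argument resumes and eventually terminates in one of the types~I$_n$, II$_n$, III$_n$ or~V$_n$.
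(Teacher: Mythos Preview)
Your opening steps agree with the paper: lift to $\tm$, branch on how the line $x_2=n$ meets $\tm$, and in the easy branches recognise a type~V$_n$ polygon or translate by $(n,0)$ to obtain a type~III$_n$ polygon. The difficulty is entirely in the remaining subcase where $[(0,n),(n,n)]$ splits $\tm$, and here your argument has a genuine gap.

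First, the iteration is vacuous. By Lemma~\ref{lem11-02}, \emph{every} lift $\tm$ is already not split by $[\mathbf{0},(-n,-n)]$, and that is exactly the condition for $\tm$ to be a fixed point of the lift (since $a_0(\tm)=0$ iff $A_1\tm$ is not split by $[\mathbf{0},(-n,0)]$, iff $\tm$ is not split by $A_1^{-1}[\mathbf{0},(-n,0)]=[\mathbf{0},(-n,-n)]$). So the very first lift already lands you in your ``residual'' case; there is no strictly increasing sequence of $\n$-values to exploit. What you call residual is in fact the whole remaining content of the lemma.

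Second, your plan for that case---apply some unspecified shear or reflection and resume---is not a proof: without a concrete transformation and a terminating invariant there is no reason the process ends. The paper's argument is this. From $\tm$ (not split by $[\mathbf{0},(-n,-n)]$) apply the central symmetry $\varphi(x_1,x_2)=(-x_1,n-x_2)$, which converts that constraint into ``not split by $[(0,n),(n,2n)]$''. Lift $\m'=\varphi(\tm)$ once more to obtain $\tm'$; by Lemma~\ref{lem11-02} the new lift is not split by $[\mathbf{0},(-n,-n)]$, and by the last clause of the same lemma it is still not split by $[(0,n),(n,2n)]$. After again disposing of the type~V$_n$ and type~III$_n$ exits, one now has \emph{both} non-splitting constraints simultaneously, which control how $\tm'$ meets the lines $x_1-x_2=0$ and $x_1-x_2=-n$; a four-case analysis, each case resolved by one of the explicit maps $\psi_1(x_1,x_2)=(-x_1+x_2,x_2)$ or $\psi_2(x_1,x_2)=(x_1-x_2+n,x_2)$, yields a polygon of type~I$_n$, II$_n$, or III$_n$. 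No iteration beyond these two lifts is needed.
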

\begin{proof}
The polygon~$\m$ is split by the segments $[\mathbf{0},(-n,0)]$ and 
$[\mathbf{0},(0,n)]$, so the lift~$\tm$ (see Section~\ref{sec:lift}) is 
well-defined.  The polygon~$\tm$ is split by the segment $[\mathbf{0},(-n,0)]$ 
by the definition of the lift and by the segment $[\mathbf{0},(0,n)]$ by 
Lemma~\ref{lem11-02}.  By the same lemma, the segment $[\mathbf{0},(-n,-n)]$ 
does not split~$\tm$.  The lines~$x_1=\pm n$ are invariant under the lift 
transformation, so they do not split~$\tm$ either.  Besides, $\tm$ has points 
in the slab
\begin{equation}\label{eq1-th12-1}
-n\le x_1\le n,
\end{equation}
(e.~g.\ on the segment $[\mathbf{0},(0,n)]$), so we conclude that it is 
contained in this slab.

If the line $x_2=n$ does not split~$\tm$, the latter is a type~V$_n$ polygon. 
Otherwise, $\tm$ is split by one of the segments $[(-n,n), (0,n)]$ and 
$[(0,n),(n,n)]$, since their union is exaclty the set of common points of the 
line $x_2=n$ and the slab \eqref{eq1-th12-1}.

Suppose that the segment $[(-n,n),(0,n)]$ splits~$\tm$.  Let $T$ be the 
translation by the vector $(n,0)\in\nz$.  The polygon~$T\tm$ is split by the 
segments
\begin{gather*}
[(0,n),(n,n)]=T[(-n,n),(0,n)],
\\
[(n,0),\mathbf{0}]=T[\mathbf{0},(-n,0)],
\\
[(n,0),(n,n)]=T[\mathbf{0},(0,n)]
\end{gather*}
and is not split by the line $x_1=0$ being the image of $x_1=-n$ under~$T$.   
In other words, $T\tm$ is a type~III$_n$ polygon, and we are done.

It remains to consider the case of the segment $[(0,n),(n,n)]$ 
splitting~$\tm$.  Let~$\varphi$ be an affine automorphism of~$\nz$ defined by
$$\varphi(x_1,x_2)=(-x_1,n-x_2),$$
(the symmetry with respect to $(0, n/2)$) and set $\m'=\varphi(\tm)$.  The 
polygon~$\m'$ lies in the slab~\eqref{eq1-th12-1}, which is invariant 
under~$\varphi$; also~$\m'$ is split by the segments
\begin{gather*}
[(-n,0),\mathbf{0}]=\varphi([(n,n),(0,n)]),
\\
[\mathbf{0},(0,n)]=\varphi([(0,n),\mathbf{0}])
\end{gather*}
and is not split by the segment
$$[(0,n),(n,2n)]=\varphi([\mathbf{0},(-n,-n)]).$$
Thus, the lift~$\tm'$ is well-defined.  By the definition of the lift and by 
Lemma~\ref{lem11-02}, the polygon~$\tm'$ is split by the segments 
$[\mathbf{0},(-n,0)]$ and $[\mathbf{0},(0,n)]$ and is not split by the 
segments $[\mathbf{0},(-n,-n)]$ and $[(0,n),(n,2n)]$; moreover, $\tm'$ lies in 
the slab~\eqref{eq1-th12-1}.  Consequently, the line $x_1 = -n$ does not 
split~$\tm'$.  If the line~$x_2=n$ does not split it either, it is a 
type~V$_n$ polygon, and we are done.  Otherwise, as before, we infer that 
either $[(-n,n),(0,n)]$ splits~$\tm'$, and we conclude by noticing that 
$T\tm'$ is a type~III$_n$ polygon, or $[(0,n),(n,n)]$ splits~$\tm'$, which we 
assume in what follows.

The intersection of the line $x_1-x_2=-n$ and the slab~\eqref{eq1-th12-1} is 
the union of the segments $[(-n,0),(0,n)]$ and $[(0,n),(n,2n)]$.  The latter 
segment does not split~$\tm'$, the line $x_1-x_2=-n$ splits~$\tm'$ if and only 
if the segment $[(-n,0),(0,n)]$ does so.  Likewise, the line $x_1-x_2=0$ 
splits~$\tm'$ if and only if the segment $[\mathbf{0},(n,n)]$ does so, for 
$\tm'$ is not split by $[(-n,-n),\mathbf{0}]$.  Thus, we have four logical 
possibilities.

\emph{Case~1.} The lines $x_1-x_2=-n$ and $x_1-x_2=0$ do not split~$\tm'$.

\emph{Case~2.} The segments $[(-n,0),(0,n)]$ and $[\mathbf{0},(n,n)]$ 
split~$\tm'$.

\emph{Case~3.} The segment $[(-n,0),(0,n)]$ splits $\tm'$ and the line 
$x_1-x_2=0$ does not.

\emph{Case~4.} The segment $[\mathbf 0,(n,n)]$ splits $\tm'$ and the line 
$x_1-x_2=-n$ does not.

In Case~1 define the affine automorphism of~$\nz$ by
$$\psi_1(x_1,x_2)=(-x_1+x_2,x_2)$$
and consider the polygon $\psi_1(\tm')$.  It is not split by the lines $x_1=0$ 
and $x_1=n$, being the images of $x_1-x_2=0$ and $x_1-x_2=-n$, respectively, 
and $\psi_1(\tm')$ has points inside the slab $0\le x_1\le n$, e.~g.\ on the 
segment
$$[(n,0),(0,n)]=\psi_1([\mathbf 0,(0,n)]).$$
Consequently, $\psi_1(\tm')$ is a type~I$_n$ polygon.

In Cases~2 and~3 we use the same automorphism~$\psi_1$.  It is not hard to 
check that in Case~2, $\psi_1(\tm')$ is a type~II$_n$ polygon and in Case~3, 
it is a type~III$_n$ polygon

In Case~4, define the automorphism of~$\nz$ by
$$\psi_2(x_1,x_2)=(x_1-x_2+n,x_2).$$
It is easily seen that $\psi_2(\tm')$ is a type~III$_n$ polygon.
\end{proof}

\section{Proof of Theorem~\ref{subtheorem-c} for polygons of types V and VI}
\label{sec:last}

We have already proved Theorem~\ref{subtheorem-c} for polygons of 
types~I$_n$--IV$_n$.  Type~Va$_n$ polygons have been our stumbling block so 
far, because we have not proved the estimate $N \le 2n$ for such $N$-gons, in 
case their vertices belong to a $(1, n/2)$-lattice.  However, for type~Va$_n$ 
$N$-gons we have the estimate $N \le 2n + 2$ (Lemma~\ref{lem:th11-2}).  In 
view of Lemmas~\ref{lem:th11-1} and~\ref{lem:th12-1}, this estimate is valid 
for type~V$_n$ and~VI$_n$ $N$-gons as well.  Combining this with 
Theorem~\ref{th:types}, we obtain the following particular case of 
Theorem~\ref{th:mt}:
\begin{lemma}
\label{lem:sq}
Let~$n$ be an integer, $n \ge 3$; then any integer polygon free of 
$\nz$-points has no more than $2n + 2$ vertices.
\end{lemma}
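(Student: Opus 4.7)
The plan is to assemble Lemma~\ref{lem:sq} directly from the results already established, using Theorem~\ref{th:types} as the entry point. Given an arbitrary integer $N$-gon $\m$ free of $\nz$-points, Theorem~\ref{th:types} supplies an affine automorphism $\varphi$ of $\nz$ such that $\varphi(\m)$ has one of the types~I$_n$--VI$_n$. Because affine automorphisms of $\nz$ are bijections of the plane sending $\nz$ onto itself, $\varphi(\m)$ is an integer $N$-gon free of $\nz$-points with the same number of vertices as $\m$. Hence it suffices to verify the bound $N\le 2n+2$ for polygons of each of the six types.

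I would then walk through the types one by one, in each case citing the strongest estimate available. For types~I$_n$ and~II$_n$, the estimate $N\le 2n+2$ is part of Theorem~\ref{subtheorem-c}(i), which was already established in \cite{BKa}. For type~III$_n$ polygons the bound is given by Lemma~\ref{lem8-2} specialised to $\Gamma=\zz$ and $b=0$. For type~IV$_n$ polygons the bound is part of what was proved in Section~\ref{sec:iv} (specifically, it is captured by Lemma~\ref{lem9-3} with $b=0$, once the additional reduction in the proof of Theorem~\ref{subtheorem-c} for type~IV is invoked for polygons not split by $[(0,-n),(n,0)]$, which are mapped onto type~II$_n$ or type~III$_n$ polygons).

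The two remaining types are the ones where a second reduction is needed. For a type~V$_n$ polygon, Lemma~\ref{lem:th11-1} supplies an affine automorphism of $\nz$ sending it to either a type~III$_n$ polygon, which has been handled, or a type~Va$_n$ polygon, for which Lemma~\ref{lem:th11-2} gives $N\le 2n+2$. For a type~VI$_n$ polygon, Lemma~\ref{lem:th12-1} supplies an affine automorphism sending it to a polygon of one of the types~I$_n$, II$_n$, III$_n$, or~V$_n$; all four options have already been dealt with. Composing reductions if necessary and noting that every step preserves both the vertex count and the property of being $\nz$-free, the bound $N\le 2n+2$ propagates back to the original polygon.

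There is really no main obstacle: the proof is a bookkeeping synthesis of Theorem~\ref{th:types}, the type-by-type parts of Theorem~\ref{subtheorem-c}(i), and the reduction Lemmas~\ref{lem:th11-1}, \ref{lem:th11-2}, and~\ref{lem:th12-1}. The only thing to be careful about is to note explicitly that each reduction uses an affine automorphism of $\nz$ and therefore preserves the hypotheses and the quantity being estimated, so that iterating them (for instance VI~$\to$~V~$\to$~Va) is legitimate.
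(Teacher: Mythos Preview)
Your proposal is correct and follows essentially the same route as the paper: reduce via Theorem~\ref{th:types} to one of the six types, invoke the already-established part~(i) of Theorem~\ref{subtheorem-c} for types~I$_n$--IV$_n$, and for types~V$_n$ and~VI$_n$ use Lemmas~\ref{lem:th11-1}, \ref{lem:th11-2}, and~\ref{lem:th12-1} to reduce to previously handled cases. The paper actually presents this as a one-paragraph observation preceding the statement rather than a formal proof, so your slightly more detailed bookkeeping is a faithful expansion of the same argument.
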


Now we can prove the missing estimate for type~Va polygons.

\begin{lemma}
\label{lem13-2}
Let~$n$ be an even integer, $n \ge 4$, and~$\m$ be a type~Va$_n$ $N$-gon.  
Suppose that the vertices of~$\m$ belong to a $(1, n/2)$-lattice.  Then
\begin{equation}\label{eq1-lem13-2}
N\le 2n.
\end{equation}
\end{lemma}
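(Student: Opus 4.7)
The plan is to analyse the lattice $\Gamma$ through its small $\mathbf e_1$- and $\mathbf e_2$-steps $s_1$ and $s_2$, which by Proposition~\ref{pr:th1-11} are both divisors of $\det\Gamma=n/2$.  The reflection $(x_1,x_2)\mapsto(x_2,x_1)$ preserves $\Delta_n$, maps type~Va$_n$ polygons to type~Va$_n$ polygons, and exchanges the two small steps of the vertex lattice, so I may assume $s_2\ge s_1$.

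The first case, $s_2\ge 2$, is settled by elementary line counting: every vertex of $\m$ lies on one of the horizontal lines $x_2=ks_2$ for $k=0,1,\ldots,2n/s_2$ meeting $\Delta_n$, the top line $x_2=2n$ contains only the $\nz$-point $(0,2n)$ and so supports no vertex of $\m$, while each of the remaining $2n/s_2\le n$ lines supports at most two vertices of the convex polygon $\m$, giving $N\le 2n$.

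The delicate case is $s_1=s_2=1$.  Proposition~\ref{pr:th1-12} then yields a basis $(\mathbf e_1+a\mathbf e_2,(n/2)\mathbf e_2)$ of $\Gamma$ with $a$ uniquely determined modulo $n/2$ and (since $s_2=\gcd(a,n/2)=1$) coprime to $n/2$.  I would apply the linear transformation $A$ sending $\mathbf e_1+a\mathbf e_2$ to $\mathbf e_1$ and $(n/2)\mathbf e_2$ to $\mathbf e_2$; it maps $\Gamma$ onto $\zz$, $\nz$ onto $\Lambda=n\z\times 2\z$, and $\Delta_n$ onto the triangle $\widehat\Delta$ with vertices $\mathbf 0$, $(2n,-4a)$, and $(0,4)$.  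The image $\widetilde\m=A\m$ is an integer polygon contained in $\widehat\Delta$ and free of $\Lambda$-points; since $n$ is even we have $\nz\subset\Lambda$, so Lemma~\ref{lem:sq} applied to $\widetilde\m$ supplies the preliminary bound $N\le 2n+2$.  I would then choose the representative $a$ minimising the $y$-extent $q-p$ of $\widehat\Delta$, where $p=\min(0,-4a)$ and $q=\max(4,-4a)$, and observe that the extreme lines $x_2=p$ and $x_2=q$ each meet $\widehat\Delta$ only at a triangle vertex belonging to $\Lambda$, and so carry no vertex of $\widetilde\m$; the remaining $q-p-1$ horizontal lines carry at most $2(q-p-1)\le 2n$ vertices in every residue class other than the threshold one where $q-p=n+2$.

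The main obstacle is this threshold case, which occurs precisely when $n\equiv 2\pmod 4$ and $a\equiv(n-2)/4\pmod{n/2}$ (note that $\gcd((n-2)/4,n/2)=1$ is automatic, since $n/2=2\cdot(n-2)/4+1$).  For these residues the plain line count only matches Lemma~\ref{lem:sq}'s bound of $2n+2$, and a sharper argument in the spirit of the $n=3$ sub-case of Lemma~\ref{lem8-4} is required.  Specifically, I would analyse the two penultimate lines $x_2=p+1$ and $x_2=q-1$: a direct computation of the interval $\widehat\Delta$ cuts out on each of them, combined with a convexity comparison of the left and right chains of $\widetilde\m$ forced by the very limited supply of admissible (non-$\Lambda$) integer points on both the penultimate lines and the adjacent middle lines, rules out any configuration of $\widetilde\m$ that would attain two vertices on both penultimate lines simultaneously, trimming two more vertices from the count and yielding $N\le 2n$.
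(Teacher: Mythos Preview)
Your approach is genuinely different from the paper's, and most of it is sound. The case $s_2\ge 2$ is correct; in the case $s_1=s_2=1$ your transformation and the minimisation of $q-p$ over representatives of $a$ are right, and outside the threshold residue you do get $q-p\le n-2$ (since $q-p\in 4\z$), hence $2(q-p-1)\le 2n$. One small slip: when the minimiser is $a=-1$ the top side of $\widehat\Delta$ lies on $x_2=q$, so that extreme line is not a single point; but then $q-p=4$ and the count is trivially $\le 2n$.

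The real issue is the threshold case $q-p=n+2$. As written, your claim ``ruling out two vertices on both penultimate lines'' saves only \emph{one} vertex: it reduces the ceiling from $2(q-p-1)=2n+2$ to $2n+1$, not $2n$. The argument can be completed, but not in the way you state. For $m:=(n-2)/4\ge 2$ a sharper interval computation shows that the two lines $x_2=-4m+1$ and $x_2=-4m+2$ each meet $\widehat\Delta$ in an interval containing a single integer point, which saves two. For $m=1$ (that is, $n=6$) this fails, and one must instead bound each of the left and right boundary chains separately: the first and last edge-slopes of a chain running from $x_2=3$ to $x_2=-3$ are pinned to $\{0,1,2,3\}$ by the admissible points on $x_2=\pm2,\pm3$, and strict monotonicity of slopes along a convex chain then forces each chain to have at most $5$ edges, whence $N\le 12$. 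That is a real case analysis you have not supplied.

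The paper bypasses all of this. For $n\ge 6$ it assumes $N>2n$, maps $\Gamma$ onto $\zz$ as you do, but then uses Lemma~\ref{lem:sq} via a pigeonhole rather than as a mere preliminary bound: if the image polygon contained fewer than $(n-1)^2$ integer points, some residue class modulo $n-1$ would be missed and a translate would avoid $(n-1)\zz$, contradicting Lemma~\ref{lem:sq}. Pick's theorem on the image triangle (area $4n$) gives an upper bound of fewer than $4n+1$ integer points, and $(n-1)^2>4n$ for $n\ge 6$ is the contradiction. The case $n=4$ is handled by a short direct argument akin to your $s_2\ge 2$ case. This is considerably cleaner than pushing the line count through the threshold residue.
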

\begin{proof}
Let $\Gamma$ be a $(1, n/2)$-lattice containing the vertices of~$\m$.

By definition, $\m$ is contained in the triangle~$\Delta_n$ defined 
by~\eqref{eq*-11}.

Assume that $n=4$, then we must prove that
\begin{equation}
\label{eq2-lem13-2}
N\le8.
\end{equation}
In this case $\Gamma$ is a $(1,2)$-lattice, and by 
Proposition~\ref{pr:th1-11}, the only possible values for the small 
$\mathbf{e}_1$- and $\mathbf{e}_2$-steps of~$\Gamma$ are~1 and~2.

If the small $\mathbf{e}_1$-step of~$\Gamma$ is~2, it is easily seen that all 
the points of~$\Gamma$ lying in~$\Delta_4$ and not belonging to $4\zz$ lie on 
the lines
$$x_1=0,\quad x_1=2,\quad x_1=4,\quad x_1=6.$$
In particular, all the vertices of~$\m$ lies on these lines.  Each of the four 
lines contains at most two vertices, and~\eqref{eq2-lem13-2} follows.

The case when the small $\mathbf{e}_2$-step of~$\Gamma$ is~2 is handled in the 
same way.

Suppose that the small $\mathbf{e}_1$-step of~$\Gamma$ is~1.  By 
Proposition~\ref{pr:th1-12} it has a basis $(\mathbf{e}_1+a\mathbf{e}_2, 
2\mathbf{e}_2)$, where $a=0$ or $a=1$. In the former case the small 
$\mathbf{e}_1$-step of~$\Gamma$ is~2 and \eqref{eq2-lem13-2} is proved.  In 
the latter case define the automorphism of~$4\zz$ by
$$\psi(x_1,x_2)=(x_1,x_1-x_2+8).$$
The polygon $\psi(\m)$ lies in the triangle $\Delta_4=\psi(\Delta_4)$ and its 
vertices belong to a $(1, 2)$-lattice~$\psi(\Gamma)$ spanned by 
$(\mathbf{e}_1, -2\mathbf{e}_2)$.  Obviously, the small $\mathbf{e}_2$-step 
of~$\psi(\Gamma)$ is~2, so we obtain~\eqref{eq2-lem13-2} applying the proved 
part of the lemma to~$\psi(\tm)$.

Now assume that $n \ge 6$ and, contrary to our assertion,
\begin{equation}\label{eq3-lem13-2}
N>2n.
\end{equation}
It follows from Proposition~\ref{pr:snf} that there exists a unimodular 
transformation~$B$ such that $B\Gamma = \z \times (n/2)\z$.  Clearly, the 
transformation $A=\diag(1,2/n)B$ maps~$\Gamma$ onto~$\zz$, so $\m'=A\m$ is an 
integer $N$-gon contained in the integer triangle $\Delta'=A\Delta_n$.

Let us estimate the number of integer points in~$\m'$, which we denote 
by~$n_0$.

We claim that
\begin{equation}\label{eq4-lem13-2}
n_0\ge(n-1)^2.
\end{equation}

Indeed, if $\m'$ contained less then $(n-1)^2$ integer points, we could find 
two integers $i_1$ and $i_2$ such no integer point $(u_1,u_2)\in\m'$ would 
satisfy
\begin{gather*}
x_k\equiv i_k\pmod{(n-1)}
\end{gather*}
simultaneously for $k = 0$ and $k = 1$.  In other words, the shifted $N$-gon
$\m' - (i_1, i_2)$ would be free of $(n-1)\zz$-points, which is impossible due 
to Lemma~\ref{lem:sq} and inequality~\eqref{eq3-lem13-2}.  
Thus,~\eqref{eq4-lem13-2} is proved.

To estimate $n_0$ from above we use the inclusion $\m'\subset\Delta'$.  
Let~$n'$ be the number of integer points in the interior of~$\Delta'$ and $b'$ 
be the number of integer points on its boundary.  Observe that the vertices 
and midpoints of the sides of~$\Delta_n$ are $\nz$-points, so they do not 
belong to~$\m$; consequently, the vertices and the midpoints of~$\Delta'$ do 
not belong to~$\m'$.  Therefore, if~$\m'$ has common points with a side 
of~$\Delta'$, they lie between the midpoint of the side and one of its 
endpoints.  This and the fact that the vertices of~$\Delta'$ are integer 
points, imply
$$n_0<n'+\frac{b'}{2}.$$

Let~$s'$ be the area of~$\Delta'$.  By Pick's theorem,
\begin{equation*}
s' = n' + \frac{b'}{2} - 1
,
\end{equation*}
whence
$$n_0<s'+1.$$
On the other hand, $s' = 4n$, because the area of~$\Delta_n$ is $2n^2$ and 
$|\det A|=2/n$.  Finally, we obtain
$$n_0<4n+1.$$

Comparing the last inequality and~\eqref{eq4-lem13-2}, we get
$$(n-1)^2<4n+1.$$
which cannot hold if $n \ge 6$.  The contradiction proves the lemma for $n \ge 
6$.
\end{proof}

As a corollary of Lemmas~\ref{lem13-2} and Lemma~\ref{lem:th11-2} we obtain 
that Theorem~\ref{subtheorem-c} holds for type~Va polygons.  In view of 
Lemma~\ref{lem:th11-1} we conclude that it also holds for type~V polygons.  
Now, Lemma~\ref{lem:th12-1} implies that Theorem~\ref{subtheorem-c} is true 
for type~VI polygons as well.

Thus, Theorem~\ref{subtheorem-c} is proved in full generality.  Combining it 
with other results of~\cite{BKa}, we conclude that Theorem~\ref{th:mt}---the 
Main Theorem of~\cite{BKa}---is proved as well.

\bibliographystyle{plain}
\bibliography{lit}

\begin{thebibliography}{10}

\bibitem{AZ95}
Dragan~M Acketa and Joviša~D Ẑuni{\'c}.
\newblock On the maximal number of edges of convex digital polygons included
  into an m$\times$ m-grid.
\newblock {\em Journal of Combinatorial Theory, Series A}, 69(2):358--368,
  1995.

\bibitem{Ave13}
Gennadiy Averkov.
\newblock On maximal s-free sets and the helly number for the family of
  s-convex sets.
\newblock {\em SIAM Journal on Discrete Mathematics}, 27(3):1610--1624, 2013.

\bibitem{AKN15}
Gennadiy Averkov, Jan Kr{\"u}mpelmann, and Benjamin Nill.
\newblock Largest integral simplices with one interior integral point: Solution
  of hensley's conjecture and related results.
\newblock {\em Advances in Mathematics}, 274:118--166, 2015.

\bibitem{BCCZ10}
Amitabh Basu, Michele Conforti, G{\'e}rard Cornu{\'e}jols, and Giacomo
  Zambelli.
\newblock Maximal lattice-free convex sets in linear subspaces.
\newblock {\em Mathematics of Operations Research}, 35(3):704--720, 2010.

\bibitem{BR09}
Matthias Beck and Sinai Robins.
\newblock {\em Computing the continuous discretely}.
\newblock Springer, 2007.

\bibitem{Bli00}
Nikolai Bliznyakov.
\newblock On an extremal property of integer polygons ({R}ussian).
\newblock In {\em Abstracts of the international conference `Nonlinear analysis
  and functional differential equations'}, page~58, Voronezh, 2000. Voronezh
  State University.

\bibitem{BKa}
Nikolai Bliznyakov and Stanislav Kondratyev.
\newblock Existence of sublattice points in lattice polygons, 2016.
\newblock Pr\'e-Publica\c c\~oes do Departamento de Matem\'atica, Universidade
  de Coimbra, Preprint Number 16-27.

\bibitem{Cas12}
John William~Scott Cassels.
\newblock {\em An introduction to the geometry of numbers}.
\newblock Springer Science \& Business Media, 2012.

\bibitem{EGH89}
Paul Erd{\"o}s, Peter~M Gruber, and Joseph Hammer.
\newblock {\em Lattice points}.
\newblock Longman scientific \& technical Harlow, 1989.

\bibitem{Gru07}
Peter Gruber.
\newblock {\em Convex and discrete geometry}, volume 336.
\newblock Springer Science \& Business Media, 2007.

\bibitem{GL87}
Peter~M Gruber and Cornelis~Gerrit Lekkerkerker.
\newblock {\em Geometry of numbers}.
\newblock North-Holland, 1987.

\bibitem{Gru03}
Branko Gr{\"u}nbaum.
\newblock Convex polytopes, volume 221 of graduate texts in mathematics, 2003.

\bibitem{Hen83}
Douglas Hensley.
\newblock Lattice vertex polytopes with interior lattice points.
\newblock {\em Pacific Journal of Mathematics}, 105(1):183--191, 1983.

\bibitem{LZ91}
Jeffrey~C Lagarias and G{\"u}nter~M Ziegler.
\newblock Bounds for lattice polytopes containing a fixed number of interior
  points in a sublattice.
\newblock {\em Canadian Journal of Mathematics}, 43:1022--1035, 1991.

\bibitem{Lov89}
L{\'a}szl{\'o} Lov{\'a}sz.
\newblock Geometry of numbers and integer programming.
\newblock {\em Mathematical Programming: Recent Developments and Applications},
  pages 177--210, 1989.

\bibitem{MD11}
Diego~AR Mor{\'a}n and Santanu~S Dey.
\newblock On maximal s-free convex sets*.
\newblock {\em SIAM Journal on Discrete Mathematics}, 25(1):379, 2011.

\bibitem{Nor12}
C.~Norman.
\newblock {\em Finitely Generated Abelian Groups and Similarity of Matrices
  over a Field}.
\newblock Springer Undergraduate Mathematics Series. Springer, 2012.

\bibitem{Pik01}
Oleg Pikhurko.
\newblock Lattice points in lattice polytopes.
\newblock {\em Mathematika}, 48(1-2):15--24, 2001.

\bibitem{Rab89}
Stanley Rabinowitz.
\newblock A census of convex lattice polygons with at most one interior lattice
  point.
\newblock {\em Ars Combin}, 28:83--96, 1989.

\bibitem{Rab90}
Stanley Rabinowitz.
\newblock On the number of lattice points inside a convex lattice n-gon.
\newblock {\em Congr. Numer}, 73:99--124, 1990.

\bibitem{Sco88}
Paul~R Scott.
\newblock Modifying {M}inkowski's theorem.
\newblock {\em Journal of Number Theory}, 29(1):13--20, 1988.

\bibitem{Sim90}
RJ~Simpson.
\newblock Convex lattice polygons of minimum area.
\newblock {\em Bulletin of the Australian Mathematical Society},
  42(03):353--367, 1990.

\bibitem{Zie95}
G{\"u}nter~M Ziegler.
\newblock {\em Lectures on polytopes}, volume 152.
\newblock Springer Science \& Business Media, 1995.

\end{thebibliography}

\end{document}